    \def\subdivisionscale{1.5}
\newtheorem{theorem}{Theorem}
\newtheorem{lemma}[theorem]{Lemma}
\newtheorem{fact}[theorem]{Fact}
\newtheorem{prop}[theorem]{Proposition}
\newtheorem{observation}[theorem]{Observation}
\theoremstyle{definition}
\newtheorem{remark}[theorem]{Remark}
\newtheorem*{remark*}{Remark}
\newtheorem{example}[theorem]{Example}
\def\NN{{\mathbb N}}
\renewcommand{\geq}{\geqslant}
\renewcommand{\leq}{\leqslant}
\newcommand\cA{\mathcal{A}}
\newcommand\cH{\mathcal{H}}
\def\deg{\operatorname{deg}}
\def\gap{\operatorname{gap}}
\def\Ex{\operatorname{Ex}}
\newcommand{\limp}{{\rm limp}}
\begin{document}

\title{On the purity of minor-closed classes of graphs}

\author[1]{Colin McDiarmid}
\author[2]{Micha{\l} Przykucki}
\affil[1]{Department of Statistics, University of Oxford\thanks{Email: cmcd@stats.ox.ac.uk}}
\affil[2]{School of Mathematics, University of Birmingham\thanks{Email: m.j.przykucki@bham.ac.uk. During a large part of this project, the second author was affiliated with St~Anne's College and the Mathematical Institute of the University of Oxford.}}
\renewcommand\Authands{ and }
\date{\today}

\maketitle

\makeatletter{\renewcommand*{\@makefnmark}{}
\footnotetext{\textcopyright~2018. This manuscript version is made available under the CC-BY-NC-ND 4.0 License.}\makeatother}

\begin{abstract}
Given a graph $H$ with at least one edge, let $\gap_{H}(n)$ denote the maximum difference between the numbers of edges in two $n$-vertex edge-maximal graphs with no minor $H$.  We show that for exactly four connected graphs $H$ (with at least two vertices), the class of graphs with no minor $H$ is pure, that is, $\gap_{H}(n) = 0$ for all $n \geq 1$; and for each connected graph $H$ (with at least two vertices) we have the dichotomy that either $\gap_{H}(n) = O(1)$ or $\gap_{H}(n) = \Theta(n)$. Further, if $H$ is 2-connected and does not yield a pure class, then there is a constant $c>0$ such that  $\gap_{H}(n) \sim cn$.  We also give some partial results when $H$ is not connected or when there are two or more excluded minors. 
\end{abstract}


\section{Introduction}
\label{sec:intro}

We say that a graph $G$ contains a graph $H$ as a \emph{minor} if we can obtain a graph isomorphic to $H$ from a subgraph of $G$ by using edge contractions (discarding any loops and multiple edges, we are interested in simple graphs). A class of graphs $\cA$ is \emph{minor-closed} if for each $G \in \cA$, each minor $G'$ of $G$ is also in $\cA$. We say that $H$ is an \emph{excluded minor} for $\cA$ if $H$ is not in $\cA$ but each minor of $H$ (other than $H$ itself) is in $\cA$. Robertson and Seymour \cite{robertsonSeymour} showed that, for each minor-closed class $\cA$ of graphs, the set $\cH$ of excluded minors is finite. We say that $G$ is $H$-\emph{free} if it has no minor $H$; and given a set $\cH$ of graphs, $G$ is $\cH$-\emph{free} if it is $H$-free for all $H \in \cH$.  We denote the class of all $\cH$-free graphs by $\Ex(\cH)$, and write $\Ex(H)$ when $\cH$ consists of just the graph $H$.  

Observe that $\Ex(\cH)$ contains $n$-vertex graphs for each $n$ as long as each graph in $\cH$ has at least one edge.  Let us call such a set $\cH$ of graphs \emph{suitable} as long as it is non-empty.  We shall restrict our attention to suitable classes $\cH$, and be interested in the number of edges in edge-maximal $\cH$-free graphs. Given a graph $G$, let $v(G)$ denote the number of vertices and $e(G)$ the number of edges.  For all $n \geq 1$, let
\[
 E_\cH(n) = \{ e(G) : v(G) = n \text{ and } G \text{ is an edge-maximal $\cH$-free graph} \}.
\]
Also, let $M^+_\cH(n) = \max E_\cH(n)$, and $M^-_\cH(n) = \min E_\cH(n)$. Finally, let us define
\[
 \gap_{\cH}(n) = M^+_\cH(n) - M^-_\cH(n).
\]
As for $\Ex(H)$, we write $\gap_{H}(n)$ to denote $\gap_{\cH}(n)$ when $\cH$ consists of just the graph $H$. This is the case on which we focus.

The function $M^+_\cH(n)$ (sometimes in the form of $2M^+_\cH(n)/n$ to analyse the maximum average degree of graphs in $\cA=\Ex(\cH)$) has been studied extensively for various suitable sets $\cH$. Mader \cite{minorsLinear} showed that, given a graph $H$, there is a constant $c=c(H)$ such that $e(G) \leq c\, v(G)$ for each graph $G \in \Ex(H)$. Let us define $\beta_{\cH}$ by setting
\begin{equation} \label{eqn.betadef}
 \beta_{\cH} := \sup_{G \in \cA} \frac{e(G)}{v(G)} = \sup_{n \geq 1} \frac{M^+_{\cH}(n)}{n},
\end{equation}
noting that $\beta_{\cH}$ is finite.  Write $\beta_{H}$ when $\cH$ consists just of the graph $H$. Building on work of Mader \cite{maderComplete}, Kostochka \cite{kostochkaComplete} and Fernandez de la Vega \cite{fdlvComplete}, Thomason \cite{completeMinors} showed that, for each positive integer $r$, we have $M^+_{K_r}(n) \sim \beta_{K_r} n$ as $n \to \infty$; and $\beta_{K_r} \sim \alpha \,  r \sqrt{\log r}$ as $r \to \infty$, where $\alpha \approx 0.319$. The value of $\beta_H$ for dense graphs $H$ was studied by Myers and Thomason \cite{nonCompleteMinors}. Reed and Wood \cite{sparseMinors} analysed this parameter for sparse forbidden minors $H$. Cs\'oka, Lo, Norin, Wu and Yepremyan \cite{disconnectedMinors} focused on $H$ being a union of disjoint cycles and, more generally, of disjoint $2$-connected graphs.

Much less is known about the function $M^-_\cH(n)$ and, consequently, about $\gap_{\cH}(n)$, for a suitable set $\cH$.  From Mader's result it follows that we always have $\gap_{\cH}(n) = O(n)$. We say that the class $\cA = \Ex(\cH)$ is \emph{pure} if we have $\gap_{\cH}(n) = 0$ for each positive integer $n$.  For example, the class $\Ex(K_3)$ of forests is pure, since the $n$-vertex edge-maximal forests are the trees, each with $n-1$ edges. Our first main theorem is:
\begin{theorem}
\label{thm:connpure}
The connected graphs $H$ on at least two vertices such that the class $\Ex(H)$ is pure are precisely the complete graphs $ K_2, K_3, K_4$ and the 3-vertex path $P_3$. 
\end{theorem}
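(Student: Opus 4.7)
I prove the two directions separately.

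\emph{The four listed classes are pure.} For each $H \in \{K_2, P_3, K_3, K_4\}$ I invoke a classical description of $\Ex(H)$ and read off the common edge count of its edge-maximal members. $\Ex(K_2)$ consists of edgeless graphs, so $M^{\pm}_{K_2}(n) = 0$. Every $P_3$-minor-free graph is a disjoint union of single edges and isolated vertices, and edge-maximality forces at most one isolated vertex (two could be joined by a new edge without creating a $P_3$-minor), so the edge-maximal members have exactly $\lfloor n/2 \rfloor$ edges. $\Ex(K_3)$ is the class of forests and its edge-maximal members are the trees, each with $n-1$ edges. Finally, the edge-maximal members of $\Ex(K_4)$ are precisely the $2$-trees, each with $2n-3$ edges. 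In every case $E_H(n)$ is a singleton, so $\gap_H(n) = 0$ for all $n$.

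\emph{No other connected $H$ on at least two vertices yields a pure class.} I would produce, for each such $H$, some $n$ and two $n$-vertex edge-maximal $H$-free graphs of different sizes. I split on $h := |V(H)|$. For $h = 4$ the remaining graphs are $P_4$, $K_{1,3}$, $C_4$, the paw, and the diamond, each of which admits a small explicit witness: on $n = 4$ the graphs $2K_2$ and $K_{1,3}$ are both edge-maximal $P_4$-free (with $2$ and $3$ edges), and since $\Ex(K_{1,3})$ is simply the class of graphs of maximum degree at most $2$, on $n = 4$ both $C_4$ and $K_3 \cup K_1$ are edge-maximal (with $4$ and $3$ edges); similar small pairs handle the remaining three. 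For $h \ge 5$ I would build two edge-maximal $H$-free graphs on the same large $n$ of different densities. On the dense side, I take an edge-maximal graph of density close to $\beta_H$: for $H = K_h$ an $(h-1)$-tree works, and for a $2$-connected $H$ one can form iterated $2$-sums along an edge of copies of $H - e$, which is $H$-free by the standard fact that $2$-sums preserve the absence of $2$-connected minors, and then complete. On the sparse side, I exploit the trivial observation that $K_{h-1}$ is $H$-free (it has too few vertices) and build a sparse $H$-free graph by tree-like iteration (tailored to $H$ where necessary), then complete to an edge-maximal graph of strictly smaller density; at sufficiently large $n$ the edge counts differ.

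\emph{Main obstacle.} The hardest step is producing the sparse edge-maximal $H$-free graph and verifying both its $H$-freeness and its edge-maximality. Edge-maximality requires showing that every non-edge, when added, forces an $H$-minor, typically by routing branch sets of $H$ through the new edge using the clique structure on either side, which is where the connectedness hypothesis on $H$ is actually used. The case of small-$\beta_H$ graphs, notably long paths $H = P_k$, demands extra care, because a naive tree-of-cliques construction already contains long paths and therefore fails to be $H$-free; here a construction tuned to bounded diameter is needed. Handling the dense side when $H$ has a cut vertex, by decomposing $H$ into its blocks and building up the dense construction block by block, is the remaining ingredient.
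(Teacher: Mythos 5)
Your first direction (purity of $\Ex(K_2)$, $\Ex(P_3)$, $\Ex(K_3)$, $\Ex(K_4)$) is fine and matches the paper. The converse, however, is where essentially all the work lies, and your proposal does not contain it. For $h\ge 5$ the dense side is cheap (one can always take an edge-maximal graph with $M^+_H(n)$ edges), so everything hinges on exhibiting an edge-maximal $H$-free graph that is strictly \emph{sparser}; your plan for this is ``build a sparse $H$-free graph by tree-like iteration \dots then complete to an edge-maximal graph of strictly smaller density''. But completion can add linearly many edges, and there is no a priori reason the completed graph is sparser than $M^+_H(n)$ -- controlling this is precisely the content of the theorem, as you yourself concede in your ``main obstacle'' paragraph. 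The paper resolves it by constructions that are \emph{already} edge-maximal and whose maximality is verified case by case according to the structure of $H$: for leafless non-complete $H$ of connectivity $k$, many copies of $K_{h-1}$ overlapping in $k-1$ common vertices, with or without extra vertices of degree $\delta(H)-1$ (Lemma \ref{lem:noLeaves}); for $H$ with a strong separating vertex, disjoint $K_{h-1}$'s versus disjoint $K_{h-2}$'s (Lemma \ref{lem:separatingVertex}); for $H=K_r$, Wagner's characterisation of edge-maximal $K_5$-free graphs (clique-sums of Wagner graphs versus planar triangulations) plus an apex induction (Lemma \ref{lem:completeImpure}); and, hardest of all, a long analysis of graphs with exactly one leaf and no strong separating vertex (Lemmas \ref{lem:cliquePlusALeaf}--\ref{lem:leafPlusAlmostClique} and Observation \ref{obs:h=5oneLeaf}), where the sparse/dense witnesses are quite bespoke (e.g.\ a clique completely joined to a long cycle, overlapping $(h-2)$-cliques, a Hamilton cycle with its even vertices made into a clique). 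None of these ideas, nor substitutes for them, appear in your outline; in particular you offer nothing that would separate the two sides for $H$ a clique plus a pendant edge, or for $H$ with one leaf whose removal is $2$-connected.

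Two of the auxiliary claims you do make are also wrong. For $H=K_h$ an $(h-1)$-tree contains $K_h$ as a subgraph, so it is not even $H$-free (you presumably meant an $(h-2)$-tree; but that only supplies the dense side, and for complete $H$ the sparse side is exactly where Wagner's theorem is needed). And the ``standard fact that $2$-sums preserve the absence of $2$-connected minors'' is false: the $2$-sum of two triangles contains $C_4$, and the $2$-sum of two copies of $C_4$ contains the diamond; the correct statement requires the excluded minor to be $3$-connected. Finally, in the $h=4$ case your witnesses for $P_4$ and $K_{1,3}$ are correct, but $C_4$, the paw and the diamond are waved off with ``similar small pairs'', and for $C_4$ and the diamond (leafless, $\delta=2$) the natural argument is again the overlapping-cliques construction of Lemma \ref{lem:noLeaves} rather than an obvious four-vertex example. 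As it stands the proposal is a plan whose central step -- producing and verifying the sparse edge-maximal examples -- is missing.
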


We say that $\Ex(\cH)$ is \emph{near-pure} if it is not pure, but we still have $\gap_{\cH}(n) = O(1)$. Also, we define the `linear impurity parameter'
\[
 \limp(\cH) = \liminf_{n \to \infty} \frac{\gap_{\cH}(n)}{n};
\]
and we say that $\Ex(\cH)$ is \emph{linearly impure} if $\limp(\cH)>0$. Our second main result shows that all connected graphs $H$ fall into one of only three categories according to the purity of the class $\Ex(H)$.

\begin{theorem}
\label{thm:threeClasses}
For each connected graph $H$ on at least two vertices, the class of $H$-free graphs is either pure, near-pure or linearly impure.
\end{theorem}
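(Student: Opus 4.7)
The plan is to prove the contrapositive: assume $\Ex(H)$ is not near-pure, so $\gap_H$ is unbounded, and deduce that it is linearly impure. The core idea is to extract, for any prescribed constant $C = C(H)$ to be determined, a ``seed'' pair of edge-maximal $H$-free graphs $G^+, G^-$ on some $n_0$ vertices with $e(G^+) - e(G^-) \ge C$, and then replicate the seed to obtain, for each large $n$, two edge-maximal $H$-free graphs on $n$ vertices whose edge counts differ by $\Omega(n)$.

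First I would fix $C$ large (compared to a structural constant depending on $H$), and extract the seed using the hypothesis that $\gap_H$ is unbounded. Given a large target $n$, the construction combines $k \approx n/n_0$ copies of $G^+$ (respectively $G^-$) via an operation that preserves $H$-freeness. When $H$ is 2-connected, the natural choice is iterated 1-sums: any $H$-minor of a 1-sum lies in a single 2-connected block, which is a block of one summand, so the sum is $H$-free. When $H$ is connected but not 2-connected, 1-sums may create $H$-minors that straddle the cut vertex, so I would use disjoint unions instead (any connected minor lies in a single component). Either way, the resulting graph has $\Theta(n)$ vertices, and the two versions differ by exactly $kC$ edges before any completion.

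The next step is to complete each combined graph to an edge-maximal $H$-free graph by adding edges. Write $a^+$ (respectively $a^-$) for the number of edges added in a maximum-edge (respectively minimum-edge) completion; then
\[
\gap_H(n) \; \ge \; \bigl(k \, e(G^+) + a^+\bigr) - \bigl(k \, e(G^-) + a^-\bigr) \; = \; kC + (a^+ - a^-).
\]
The main obstacle is proving a structural bound of the form $|a^+ - a^-| \le D(H,n_0)\, k$, i.e., that the edges added during completion are at most linear in $k$ with a coefficient depending only on $H$ and the seed. Granted such a bound, choosing $C > D$ (possible because $\gap_H$ is unbounded) yields $\gap_H(n) \ge k(C-D) = \Omega(n)$, giving $\limp(H) > 0$ as required.

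The hard part is the structural bound on $a^\pm$. I would expect the argument to show, per combination step, that only boundedly many new cross-edges can be added to the 1-sum (or disjoint union) before forcing an $H$-minor: intuitively, because each seed piece is already edge-maximal $H$-free, adding cross-edges rapidly builds up enough of $H$ through the ``interface'' between adjacent pieces. For 2-connected $H$ this follows from the observation that cross-edges create 2-connected substructures spanning two summands, and such substructures are close to containing $H$ by edge-maximality of the pieces. For the non-2-connected case one must additionally track $H$-minors that could straddle several components via newly added connecting edges, which I would handle by choosing the attachment vertex in each seed using the block structure of $H$ so that the added edges stay in a bounded neighborhood per piece. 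A minor technical wrinkle is to cover all large $n$ (not merely multiples of $n_0$), which can be arranged by padding with a small edge-maximal $H$-free graph.
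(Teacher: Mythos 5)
Your high-level skeleton (extract a seed pair with a large gap, replicate, complete to edge-maximality, and bound the completion cost) matches the paper's strategy for the hard case, but there is a genuine circularity in how you close the argument. You need $C > D(H,n_0)$, where $D$ is your bound on the completion edges per copy; but $n_0$ is the vertex count at which the unbounded gap first exceeds $C$, so $n_0$ depends on $C$. If $D$ genuinely grows with $n_0$, the inequality $C > D(H,n_0(C))$ may never be satisfiable, and with your replication operations it \emph{does} grow with $n_0$: for a connected but not $2$-connected $H$ (the only case where the trichotomy is actually in doubt, since the paper disposes of $\delta(H)\ge 2$ by direct constructions), take $k$ disjoint copies of the sparse seed $G^-$ and let $A$ be the set of vertices of $G^-$ to which a leaf can be attached without creating an $H$-minor. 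Edges may be added between the $k$ copies of $A$, and since the only constraint is that the union of these sets stays $H$-free (hence has at most $\beta_H$ times as many edges as vertices), the completion can absorb up to order $k\,|A|$ edges, and $|A|$ can be as large as $n_0$. So $a^- - a^+$ can be of order $k n_0$, which swamps $kC$ unless $C$ is already larger than $n_0$ --- exactly what you cannot guarantee.

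The paper escapes this in two ways, both of which your proposal would need. First, it never replicates the dense seed: it proves (Lemma \ref{lem:IOmaximal}) that if the class is \emph{not} linearly impure then every $H$-free graph has density at most $(h-2)/2$, achieved by disjoint copies of $K_{h-1}$, so the dense side of the comparison is always the trivial clique construction and the threshold constant $M=\frac{h-2}{2}+2\beta_H^2+1$ depends on $H$ alone. Second, it replicates only the one anomalously sparse component $C$ of $G^-$ (Observation \ref{obs:atMostOneNotIO} guarantees there is exactly one, using that $H$ has no strong separating vertex --- a case split your proposal omits but which is essential, since, e.g., the claw must be handled separately), and it does so by \emph{identifying} all copies at a single vertex $v\in A$ chosen to have minimum degree within $A$. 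Adding an edge at any $u\in A\setminus N(v)$ then lets one contract a path to $v$ inside a single copy and invoke edge-maximality of $C$, so the addable edges are confined to the at most $2\beta_H$ neighbours of $v$ in $A$ per copy, giving a completion cost of $O(\beta_H^2 k)$ independent of $n_0$. Your remark about ``choosing the attachment vertex using the block structure of $H$'' points in this direction, but the bound you need is in terms of $H$ only, not of the seed, and the disjoint-union replication you propose for non-$2$-connected $H$ does not deliver it.
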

In other words, Theorem \ref{thm:threeClasses} says that it is not possible for the impurity of a class of $H$-free graphs to be unbounded but not grow linearly fast in $n$. We have seen in Theorem~\ref{thm:connpure} that if $H$ is $K_3$ or $K_4$ then $\gap_H(n)=0$ for each $n$, and $\limp(H)=0$. More generally, whenever $H$ is $2$-connected, $\gap_{H}(n)/n$ tends to limit, so the `liminf' in the definition of limp could be replaced by the more satisfactory `lim' (see also Theorem~\ref{thm:AddableLimit}).
\begin{theorem}
\label{thm:PositiveLimit}
Let $H$ be a 2-connected graph other than $K_3$ or $K_4$. Then, as $n \to \infty$,
\begin{equation}
  \label{eq:positiveLimit}
  \frac{\gap_{H}(n)}{n} \; \to \; \limp(H) \: >0.
\end{equation}
\end{theorem}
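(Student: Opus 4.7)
My plan is to show that the function $\gap_H$ satisfies a shifted form of superadditivity via a 2-sum operation, then invoke Fekete's lemma for the existence of the limit, and combine this with Theorem~\ref{thm:connpure} for positivity.

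The key structural ingredient is the following 2-sum lemma: if $H$ is 2-connected and $G_1, G_2$ are edge-maximal $H$-free graphs sharing (by identification) a common edge $xy$, then the 2-sum $G := G_1 \oplus_{xy} G_2$ is again edge-maximal $H$-free. The $H$-freeness of $G$ is the standard fact that a 2-connected minor of a 2-sum lies in a single summand. For edge-maximality, any non-edge internal to $V(G_1)$ or $V(G_2)$ creates an $H$-minor by edge-maximality of the relevant summand; the delicate case is a cross edge $uv$ with $u \in V(G_1)\setminus\{x,y\}$ and $v \in V(G_2)\setminus\{x,y\}$. Contracting a path in $G_2$ from $v$ to $x$ exhibits $G_1 + ux$ as a minor of $G + uv$, producing an $H$-minor unless $ux \in E(G_1)$; the analogous contractions (using $y$ in place of $x$, and swapping the roles of the summands) reduce us to the residual case $ux, uy \in E(G_1)$ and $vx, vy \in E(G_2)$. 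In this residual case the vertex set $\{u,v,x,y\}$ spans a $K_4$ in $G + uv$, and one extracts an $H$-minor from this $K_4$ together with the attachments of $u$ and $v$ to the remainders of $G_1$ and $G_2$ (using that for 2-connected $H$ every edge-maximal $H$-free graph on at least three vertices is itself 2-connected).

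Given the lemma, 2-summing extremal graphs yields
\[
M^+_H(m+n-2) \geq M^+_H(m) + M^+_H(n) - 1 \quad \text{and} \quad M^-_H(m+n-2) \leq M^-_H(m) + M^-_H(n) - 1,
\]
and subtracting gives the shifted superadditivity $\gap_H(m+n-2) \geq \gap_H(m) + \gap_H(n)$. Setting $b(k) := \gap_H(k+2)$ makes $b$ superadditive, and $b(k) = O(k)$ by Mader's theorem. Fekete's lemma then yields $b(k)/k \to L := \sup_k b(k)/k$, a finite nonnegative constant, whence $\gap_H(n)/n \to L$. Since $H$ is 2-connected on at least three vertices and $H \notin \{K_3, K_4\}$, Theorem~\ref{thm:connpure} implies $\Ex(H)$ is not pure, so some $n_0$ satisfies $\gap_H(n_0) \geq 1$; therefore $L \geq \gap_H(n_0)/(n_0-2) > 0$, and $\limp(H) = L > 0$.

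The main obstacle is the edge-maximality direction of the 2-sum lemma, specifically its residual case: the routine reductions simply return an unmodified summand, so one must argue globally using the $K_4$ on $\{u,v,x,y\}$ together with the 2-connected structure of edge-maximal $H$-free graphs, and the precise extraction of an $H$-minor is the step where careful care is required.
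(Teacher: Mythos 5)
There is a genuine gap: your key 2-sum lemma is false, and precisely in the ``residual case'' that you flag. Take $H=K_5$ and let $G_1,G_2$ be copies of $K_4$ sharing the edge $xy$, say $G_1$ on $\{x,y,a,b\}$ and $G_2$ on $\{x,y,c,d\}$; both are edge-maximal $K_5$-free. In the 2-sum, add the cross edge $ac$; this is exactly your residual case ($ax,ay\in E(G_1)$, $cx,cy\in E(G_2)$), and $\{a,c,x,y\}$ spans a $K_4$, yet the resulting $6$-vertex graph has no $K_5$-minor (any model would need five branch sets, and a short check of the possible partitions shows $b$ and $d$ can never be made adjacent to everything). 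So the 2-sum of edge-maximal $H$-free graphs need not be edge-maximal $H$-free --- this is consistent with Wagner's characterisation quoted in Section~\ref{sec:generalizations}, where $2$-clique-sums only yield maximal $K_5$-free graphs when the gluing edge lies in no triangle of one summand. Consequently your inequality $M^-_H(m+n-2)\le M^-_H(m)+M^-_H(n)-1$ is unsupported (an upper bound on $M^-_H$ requires exhibiting an \emph{edge-maximal} sparse graph), the shifted superadditivity of $\gap_H$ collapses, and with it both the existence of the limit and the deduction of positivity from a single $n_0$ with $\gap_H(n_0)\ge 1$. A secondary claim is also false: edge-maximal $H$-free graphs need not be 2-connected for 2-connected $H$ (for $H=C_4$ the bowtie, two triangles meeting in a vertex, is edge-maximal $C_4$-minor-free), which further undermines the path-contraction and ``attachment'' steps.

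For comparison, the paper avoids edge-maximality of any gluing altogether: it treats $M^+_H$ and $M^-_H$ separately using disjoint unions. Superadditivity of $M^+_H$ is immediate, and for $M^-_H$ one uses addability (here is where 2-connectivity of $H$ enters) to show that at most $(h-2)^2$ edges can be added to a disjoint union of two edge-maximal graphs, so $M^-_H(n)+(h-2)^2$ is subadditive; Fekete's Lemma then gives $\gap_H(n)/n\to\beta_H-\beta^-_H$ (Theorem~\ref{thm:AddableLimit}). Positivity is not deduced from Theorem~\ref{thm:connpure} but proved directly by explicit linear-gap constructions: Lemma~\ref{lem:completeImpure} for $K_r$, $r\ge 5$, and Lemma~\ref{lem:noLeaves} for non-complete $H$ with $\delta(H)\ge 2$, which together give $\limp(H)>0$ (Lemma~\ref{lem.noleaf}). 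If you want to salvage your outline, you would need either a correct subadditivity statement for $M^-_H$ (the paper's bounded-correction argument is the natural one) or a 2-sum operation restricted so as to preserve edge-maximality, which in general it does not.
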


An important example of a pure minor-closed class is the class of planar graphs. Indeed, for each $n \geq 3$, all $n$-vertex edge-maximal graphs $G$ embeddable in the plane are triangulations, satisfying $e(G) = 3n-6$. However, somewhat surprisingly, it is not the case that a similar statement holds for graphs embeddable in the torus: it was shown in~\cite{torusNonTriangulation} that a complete graph on $8$ vertices with the edges of a $5$-cycle $C_5$ removed (thus containing $23$ edges) is an edge-maximal graph embeddable in the torus, while each $8$-vertex triangulation of the torus, by Euler's formula, contains $24$ edges. However, for every surface $S$, the (minor-closed) class of graphs embeddable in $S$ is pure or near-pure, as shown by McDiarmid and Wood~\cite{embeddableImpurity}.

At this point, let us check that the four connected graphs listed in Theorem~\ref{thm:connpure}, namely $K_2$, $K_3$, $K_4$ and $P_3$, give rise to pure $H$-free classes of graphs. The case of $K_2$ is trivial, as $\Ex(K_2)$ consists of the graphs without edges. We already noted that the class $\Ex(K_3)$ of forests is pure.  If $H=P_3$, the path on $3$ vertices, then the $n$-vertex edge-maximal $H$-free graphs are the maximal matchings, each with $\lfloor n/2 \rfloor$ edges. Finally, the class $\Ex(K_4)$ is the class of series-parallel graphs, which is also the class of graphs of treewidth at most 2.  For each $n \geq 2$ each $n$-vertex edge-maximal such graph has exactly $2n-3$ edges. In fact, for each fixed $k \geq 1$ the edge-maximal graphs of treewidth at most $k$ are the $k$-trees, and each $n$-vertex $k$-tree has $kn-\binom{k+1}{2}$ edges for $n \geq k$ (and $\binom {n}{2}$ for $n<k$).  Thus for each $k \geq 1$ the class of graphs of treewidth at most $k$ is pure.  We will have to work much harder to prove that the four graphs listed are the only connected graphs $H$ for which $\Ex(H)$ is pure!

The rest of the paper is organised as follows. In the next section we introduce addable graph classes, and prove a general limiting result, Theorem~\ref{thm:AddableLimit}, which yields the `limit part' of Theorem~\ref{thm:PositiveLimit}.  We also sketch a useful consequence of purity or near-purity for such a class of graphs. In Section~\ref{sec.noleaf} we show that for each connected graph $H$ with no leaf (that is, with minimum degree $\delta(H) \geq 2$), if $H$ is not $K_3$ or $K_4$ then $\Ex(H)$ is linearly impure.  This is a step towards proving both Theorems~\ref{thm:connpure} and~\ref{thm:threeClasses}, and together with Theorem~\ref{thm:AddableLimit} proves Theorem~\ref{thm:PositiveLimit}, concerning a 2-connected graph $H$. In Section~\ref{sec.leaf} we complete the proof of Theorem~\ref{thm:threeClasses}, showing that for a connected excluded minor there are only the three possibilities of purity, near-purity or linear impurity. In Section~\ref{sec:allNonPure} we complete the proof of Theorem~\ref{thm:connpure}, showing that only four connected graphs $H$ give rise to pure $H$-free classes. In Section \ref{sec:generalizations} we give some extensions of our results to suitable sets $\cH$ of two or more excluded graphs, and to forbidding disconnected graphs; and finally we propose some natural open problems.


\section{Addable graph classes}
\label{sec:addable}

In this section we introduce addable graph classes.  We show that, for an addable minor-closed class $\cA$ of graphs with suitable set $\cH$ of excluded minors, $ \gap_{\cH}(n)/n $ tends to a limit, and we identify that limit as a difference of two terms (see~(\ref{eq:addableLimit})).  Finally we describe a consequence of purity or near-purity for growth constants when we have a given average degree. \smallskip

We say that a graph class $\cA$ is \emph{addable} when
\begin{enumerate}
 \item $G \in \cA$ if and only if every component of $G$ is in $\cA$ (following Kolchin \cite{randomMappings}, if $\cA$ satisfies this property we call it \emph{decomposable}), and
 \item whenever $G \in \cA$ and $u,v$ belong to different components of $G$ then the graph obtained from $G$ by adding the edge $\{u,v\}$ is also in $\cA$ (following \cite{randomMinorClosed}, such a class $\cA$ is called \emph{bridge-addable}).
\end{enumerate}
A minor-closed class is decomposable if and only if each excluded minor is connected, and it is addable if and only if each excluded minor is $2$-connected. For example, the classes of forests ($\Ex(K_3)$), series-parallel graphs ($\Ex(K_4)$), and planar graphs $(\Ex(\{ K_5, K_{3,3} \})$) are each addable.

The following general limiting result shows that in the addable case, the `liminf' in the definition of limp can be replaced by `lim'.
\begin{theorem}
\label{thm:AddableLimit}
Let $\cA$ be an addable minor-closed class of graphs, with suitable set $\cH$ of excluded minors. Then, as $n \to \infty$,
\begin{equation}
 \label{eq:AddableLimit}
 \frac{\gap_{\cH}(n)}{n} \to \limp(\cH).
\end{equation}
\end{theorem}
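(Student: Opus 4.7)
The plan is to prove that both $M_\cH^+(n)/n$ and $M_\cH^-(n)/n$ converge as $n \to \infty$; then $\gap_\cH(n)/n$ converges to the difference $\beta_\cH - \alpha_\cH$ (with $\alpha_\cH := \lim_n M_\cH^-(n)/n$), and since this limit exists it must equal $\liminf \gap_\cH(n)/n = \limp(\cH)$. The convergence of $M_\cH^+(n)/n$ is the easy half: if $G_1, G_2 \in \cA$ are edge-maximal on $n, m$ vertices with $e(G_i) = M_\cH^+(\cdot)$, then $G_1 \cup G_2$ together with a single bridge lies in $\cA$ by bridge-addability, so extending to an edge-maximal graph yields
\[
M_\cH^+(n+m) \geq M_\cH^+(n) + M_\cH^+(m) + 1.
\]
Hence $M_\cH^+(n)+1$ is super-additive, and Fekete's lemma gives $M_\cH^+(n)/n \to \beta_\cH$.

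The crux is a matching sub-additivity estimate $M_\cH^-(n+m) \leq M_\cH^-(n) + M_\cH^-(m) + C$ for some constant $C = C(\cH)$. I would take edge-maximal graphs $G_1, G_2$ realising $M_\cH^-$, join them by a bridge $e_0 = \{u_1, u_2\}$, and extend to an edge-maximal graph $\widetilde G \in \cA$. Since each $G_i$ is already edge-maximal in $\cA$, no edge with both endpoints in $V(G_i)$ can be added during the extension, so every added edge is a cross-edge between $V(G_1)$ and $V(G_2)$. Each such cross-edge closes a cycle with $e_0$ through paths in $G_1$ and $G_2$, so all cross-edges together with $e_0$ lie in a single 2-connected block $B$ of $\widetilde G$; because every $H \in \cH$ is 2-connected, $B$ must itself be $\cH$-free. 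The heart of the argument is then that the number of cross-edges is bounded by a constant depending only on $\cH$: contracting the $G_i$-sides of $B$ along its block-cutvertex tree exposes the cross-edges as a bipartite minor of $B$, and once this bipartite minor contains $K_{s,t}$ with $\min(s,t)+1 \geq h := \max_{H\in\cH} v(H)$, one obtains a $K_h$-minor in $B$ and hence an $H$-minor for some $H \in \cH$, a contradiction.

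Sub-additivity up to $C$ means $M_\cH^- + C$ is genuinely sub-additive, so Fekete's lemma yields $M_\cH^-(n)/n \to \alpha_\cH := \inf_n(M_\cH^-(n)+C)/n$. Combining,
\[
\frac{\gap_\cH(n)}{n} \;=\; \frac{M_\cH^+(n)}{n} - \frac{M_\cH^-(n)}{n} \;\longrightarrow\; \beta_\cH - \alpha_\cH,
\]
and as the sequence converges this limit equals $\liminf \gap_\cH(n)/n = \limp(\cH)$. The main obstacle is the cross-edge bound: the block $B$ is assembled from pieces of both $G_i$'s block-cutvertex trees together with the bipartite cross-edge gadget, so cleanly extracting a $K_{s,t}$-minor from such a structure requires careful use of the path structure within each $G_i$. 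This is precisely where 2-connectedness of all excluded minors---full addability, not merely bridge-addability---is essential.
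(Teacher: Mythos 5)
Your overall skeleton (Fekete's lemma applied to $M^+_\cH(n)$ and to $M^-_\cH(n)$ up to an additive constant, then subtracting) is exactly the paper's strategy, and your superadditivity argument for $M^+_\cH$ is fine. The gap is precisely at the step you yourself flag as the crux: the constant bound on the number of cross-edges that can be added when extending $G_1\cup G_2$ (plus a bridge) to an edge-maximal graph. The mechanism you propose --- contract the two sides and argue that many cross-edges would force a $K_{s,t}$-minor with $\min(s,t)$ large, hence a $K_h$-minor --- cannot work as stated, because the number of cross-edges is not controlled by excluded complete bipartite (or complete) minors: a bipartite graph between $V(G_1)$ and $V(G_2)$ can have linearly many edges and still be planar, hence free of $K_{3,3}$- and $K_5$-minors, and contracting the $G_i$-sides does not preserve the cross-edge count (parallel edges collapse), so ``exposing the cross-edges as a bipartite minor'' yields no lower bound on $s,t$ in terms of how many cross-edges there are. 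In short, $\cH$-minor-freeness of the extension alone gives only a linear bound on added edges; in your write-up the edge-maximality of $G_1$ and $G_2$ is used only to exclude intra-$G_i$ edges, and it must enter the cross-edge bound itself.

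The missing idea, which is how the paper argues, is this: edge-maximal graphs in an addable class are connected, and if two added cross-edges $\{u_1,u_2\}$ and $\{v_1,v_2\}$ have distinct endpoints $u_1\neq v_1$ in $G_1$, then contracting a path in the connected graph $G_2$ between $u_2$ and $v_2$ effectively adds the edge $\{u_1,v_1\}$ to $G_1$; by edge-maximality of $G_1$ this creates an $H$-minor unless $\{u_1,v_1\}$ was already an edge of $G_1$. Hence the endpoints of the added cross-edges induce a clique in each $G_i$, and such a clique has at most $h-2$ vertices, where $h=\min_{H\in\cH}v(H)$: an $(h-1)$-clique in $G_1$ all of whose vertices send cross-edges, together with $G_2$ contracted to a single vertex, gives a $K_h$-minor and hence an $H$-minor. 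This caps the number of addable cross-edges at $(h-2)^2$, which is the constant $C$ your subadditivity needs; with that supplied, the rest of your argument goes through.
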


To prove this result, we use two lemmas, treating $M^+_{\cH}(n)$ and  $M^-_{\cH}(n)$ separately.  Recall that $\beta_{\cH}$ was defined in~(\ref{eqn.betadef}).  In the following lemma, it is easy to see that $\beta_{\cH} \geq 1$, since $\cA$ contains all the forests.

\begin{lemma}
\label{lem.decomp}
Let $\cA$ be a decomposable minor-closed class of graphs, with suitable set $\cH$ of excluded minors.  Then
\[
 \frac1{n} M^+_\cH(n) \to \beta_{\cH} \;\; \mbox{ as } n \to \infty.
\]
\end{lemma}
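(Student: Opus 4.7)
The plan is to apply Fekete's subadditive lemma, in its superadditive form, to the sequence $M^+_\cH(n)$. First I would observe that, because an $n$-vertex graph $G\in\cA$ with the maximum possible number of edges cannot have any further edge added (such an addition would contradict maximality), every such extremal graph is automatically edge-maximal; hence
\[
 M^+_\cH(n) \;=\; \max\{e(G) : G \in \cA,\; v(G)=n\}.
\]
This identification is useful because the right-hand side is obviously monotone and combines nicely under disjoint unions.

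Next I would establish superadditivity: $M^+_\cH(m+n) \geq M^+_\cH(m) + M^+_\cH(n)$ for all $m,n\geq 1$. Indeed, let $G_1$ and $G_2$ be extremal graphs in $\cA$ on $m$ and $n$ vertices respectively, realising $M^+_\cH(m)$ and $M^+_\cH(n)$ edges. Their disjoint union $G_1 \cup G_2$ has $m+n$ vertices and $M^+_\cH(m)+M^+_\cH(n)$ edges, and since $\cA$ is decomposable it lies in $\cA$. The claimed inequality follows from the characterisation above.

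Now I would invoke Fekete's lemma for superadditive sequences: if $(a_n)$ satisfies $a_{m+n}\geq a_m+a_n$, then $a_n/n$ converges to $\sup_n a_n/n \in (-\infty,\infty]$. Applied to $a_n = M^+_\cH(n)$, this yields
\[
 \frac{M^+_\cH(n)}{n} \;\to\; \sup_{n\geq 1} \frac{M^+_\cH(n)}{n} \;=\; \beta_\cH
\]
as $n\to\infty$. Finiteness of the limit is guaranteed by Mader's theorem, which gives $M^+_\cH(n) \leq c(H) n$ for any $H\in\cH$ (hence $\beta_\cH<\infty$), and in fact is already asserted in the paragraph defining $\beta_\cH$.

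There is no real obstacle here; the only subtlety is the initial observation that $M^+_\cH(n)$ coincides with the edge-extremal count, which is what lets the disjoint-union construction feed back into the definition of $M^+_\cH$. The hypothesis that $\cA$ is decomposable (equivalently, that every excluded minor is connected) is used exactly once, at the disjoint-union step; nothing stronger, such as bridge-addability, is required.
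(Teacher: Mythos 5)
Your proof is correct and follows essentially the same route as the paper: superadditivity of $M^+_\cH(n)$ via disjoint unions (using decomposability) followed by Fekete's lemma, with the limit identified as $\sup_n M^+_\cH(n)/n = \beta_\cH$. Your explicit preliminary remark that $M^+_\cH(n)$ equals the maximum edge count over all $n$-vertex graphs in $\cA$ is a point the paper uses implicitly, and is a sensible thing to spell out.
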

\noindent
\begin{proof}
Denote $M^+_\cH(n)$ by $f(n)$.  For $i=1,2$ let $n_i$ be a positive integer and let $G_i \in \cA_{n_i}$ satisfy $e(G_i)=f(n_i)$. Since the disjoint union $G_1 \cup G_2$ is in $\cA_{n_1+n_2}$ we have
\[
 f(n_1+n_2) \geq f(n_1)+f(n_2);
\]
that is, $f$ is superadditive.  Hence by Fekete's Lemma (see for example van Lint and Wilson \cite{combinatoricsCourse})
\[
 \frac{f(n)}{n} \, \to \; \sup_k \frac{f(k)}{k} = \beta_\cH \;\; \mbox{ as } n \to \infty.
\]
\end{proof}

\begin{lemma}
  Let $\cA$ be an addable minor-closed class of graphs, with suitable set $\cH$ of excluded minors.  Then there is a constant $\beta^-_\cH \geq 1$ such that
\[
 \frac1{n} M^-_\cH(n) \to \beta^-_{\cH} \;\; \mbox{ as } n \to \infty.
\]
\end{lemma}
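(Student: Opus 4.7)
The plan is to mimic the proof of Lemma~\ref{lem.decomp}, establishing convergence of $M^-_\cH(n)/n$ by a Fekete-type argument. The starting observation is that, because $\cA$ is bridge-addable, every edge-maximal graph in $\cA$ must be connected (otherwise one could add a bridge); this already yields the bound $M^-_\cH(n) \geq n-1$, and hence $\beta^-_\cH \geq 1$ once the limit is shown to exist. Also, $M^-_\cH(n)\leq M^+_\cH(n) \leq \beta_\cH n$ by Mader, so the sequence $M^-_\cH(n)/n$ is bounded.

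The central structural tool is a strengthening: since every excluded minor in $\cH$ is $2$-connected, each block of an edge-maximal graph $G \in \cA$ is itself edge-maximal in $\cA$. Indeed, for any non-edge $e$ of a block $B$, the blocks of $G+e$ are those of $G$ with $B$ replaced by $B+e$; the standard fact that a $2$-connected minor of a graph lies entirely within one of its blocks then gives $G+e \in \cA$ iff $B+e \in \cA$, so edge-maximality of $G$ forces that of $B$. Together with the block-decomposition identity
\[
  e(G) - (v(G)-1) \;=\; \sum_{i=1}^b \bigl(e(B_i) - (v(B_i)-1)\bigr),
\]
this shows that the excess $e(G) - v(G) + 1$ is additive over blocks, which is the key to controlling $M^-_\cH(n)$ structurally.

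I would then establish a near-subadditive relation of the form $M^-_\cH(n_1 + n_2 - 1) \leq M^-_\cH(n_1) + M^-_\cH(n_2) + c$ for some constant $c = c(\cH)$, via the $1$-sum construction: take edge-maximal graphs $G_i$ on $n_i$ vertices achieving $M^-_\cH(n_i)$ and identify a vertex of $G_1$ with a vertex of $G_2$. Since $\cA$ is closed under $1$-sums (again because excluded minors are $2$-connected), the result $G$ lies in $\cA$, has $n_1 + n_2 - 1$ vertices, and $M^-_\cH(n_1) + M^-_\cH(n_2)$ edges. The main obstacle is that $G$ need not be edge-maximal: gluing two copies of $K_3$ in $\Ex(K_4)$ produces the bowtie, to which a cross-edge can still be added. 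One would then extend $G$ to an edge-maximal $G^*$ and need to bound the number of added cross-edges uniformly in $n_1$ and $n_2$, for instance by choosing the identified vertices carefully (e.g.\ a vertex of maximum degree in each $G_i$), or by inserting a small edge-maximal buffer graph between $G_1$ and $G_2$. Once a near-subadditive relation with error $O(1)$ is in hand, a standard approximate Fekete lemma delivers the desired limit $\beta^-_\cH\in[1,\beta_\cH]$.
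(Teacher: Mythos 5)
Your framework is right (connectedness of edge-maximal graphs, boundedness via Mader, and a Fekete-type conclusion once near-subadditivity with an $O(1)$ error is available), but the heart of the lemma is exactly the step you leave open: proving that only a bounded number of edges can be added when two extremal graphs are combined. You name this as ``the main obstacle'' and then only gesture at possible remedies (identify carefully chosen vertices, insert a buffer graph) without an argument that any of them gives a bound independent of $n_1$ and $n_2$. Neither suggestion is obviously sufficient, and the block-excess additivity you set up, while correct for addable classes, is never used to produce such a bound. As it stands the proposal does not prove the near-subadditive inequality, so the limit does not follow.

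The paper closes this gap with the disjoint union (no identification or buffer is needed). Let $h=\min\{v(H):H\in\cH\}\geq 3$ and let $G_1,G_2$ be connected edge-maximal graphs attaining $M^-_\cH(n_1),M^-_\cH(n_2)$. If two cross-edges $\{u_1,u_2\}$ and $\{v_1,v_2\}$ (with $u_1\neq v_1$ in $G_1$, $u_2,v_2$ in $G_2$) can both be added without creating a forbidden minor, then $u_1v_1$ must already be an edge of $G_1$: otherwise contract a $u_1$--$v_1$ path running through the connected graph $G_2$, which effectively adds $u_1v_1$ to $G_1$ and, by edge-maximality of $G_1$, creates some $H\in\cH$ as a minor. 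Hence the endpoints of added cross-edges form a clique in $G_1$, and symmetrically in $G_2$; each such clique has at most $h-2$ vertices, since an $(h-1)$-clique in $G_1$ together with the other component contracted to a single vertex yields $K_h$, which contains a member of $\cH$ as a minor. So at most $(h-2)^2$ edges can be added, giving $M^-_\cH(n_1+n_2)\leq M^-_\cH(n_1)+M^-_\cH(n_2)+(h-2)^2$, i.e.\ $f(n)=M^-_\cH(n)+(h-2)^2$ is genuinely subadditive and Fekete's Lemma applies directly. Supplying an argument of this kind (it adapts to your $1$-sum as well, with a little extra care about the identified vertex) is what your write-up still needs.
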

\begin{proof}
Let $h = \min \{ v(H) : H \in \cH \},$ and note that $h \geq 3$.  Consider the function $f(n) = M^-_{\cH}(n) + (h-2)^2$.  Note that each edge-maximal graph in $\cA$ is connected, so $f(n) \geq n$ for each $n$. 
Let $\beta^-_\cH = \inf_{k} f(k)/k \geq 1$. We claim that $f(n)$ is subadditive (that is $f(a+b) \leq f(a) + f(b)$), so by Fekete's Lemma, as $n \to \infty$ we have $f(n)/n \to \beta^-_\cH$ and thus also $M^-_{\cH}(n)/n \to \beta^-_\cH$.

 It remains to establish the claim that $f$ is subadditive. Let $n_1, n_2 \geq 1$ and let $G_1, G_2$ be edge-maximal $\cH$-free graphs with $v(G_1) = n_1$, $v(G_2) = n_2$, and such that $e(G_1)  = M^-_{\cH}(n_1), e(G_2) = M^-_{\cH}(n_2)$. Note that $G_1$  and $G_2$ are connected.
 
As in the proof of the last lemma, the disjoint union $G = G_1 \cup G_2$ is $\cH$-free. It will be enough to show that we cannot add more than $(h-2)^2$ edges to $G$ without creating an $H$-minor for some $H \in \cH$. Indeed, let $u_1 \neq v_1$ be in $V(G_1)$ and let $u_2,v_2$ be in $V(G_2)$, and assume that we can (simultaneously) add the edges $\{u_1,u_2 \}$ and $\{v_1,v_2 \}$ to $G$ without creating any $H$-minor. Then the edge $\{u_1, v_1\}$ must be present in $G_1$ since otherwise, after adding $\{u_1,u_2 \}$ and $\{v_1,v_2 \}$ to $G$, by the connectedness of $G_2$ there is a path between $u_1$ and $v_1$ that uses only vertices in $G_2$, and we may contract this path to an edge between $u_1$ and $v_1$: this would necessarily create an $H$-minor for some $H \in \cH$ by the edge-maximality of $G_1$.
 
 Hence if we add edges to $G$ without creating any $H$-minor then the vertices in $G_1$ incident to the edges that we add must induce a clique in $G_1$, with an analogous statement holding for $G_2$. By the definition of $h$, these cliques can have size at most $h-2$ (if there were an $(h-1)$-clique in $G_1$ say, and we contracted $G_2$ to a single vertex, we would obtain an $h$-clique), hence we can add at most $(h-2)^2$ edges. Consequently,
 \begin{align*}
  f(n_1+n_2) & = M^-_{\cH}(n_1+n_2) + (h-2)^2 \\
         & \leq \left ( M^-_{\cH}(n_1) + M^-_{\cH}(n_2) + (h-2)^2 \right ) + (h-2)^2 \\
         & = f(n_1)+f(n_2	).       
 \end{align*}
 Thus $f(n)$ is subadditive, and the proof is complete.
\end{proof}
 The last two lemmas show that, if $\cA$ is an addable minor-closed class of graphs with suitable set $\cH$ of excluded minors, then
\begin{equation}
  \label{eq:addableLimit}
  \frac{\gap_{\cH}(n)}{n} \to \beta_\cH - \beta^-_\cH \;\; \mbox{ as } n \to \infty.
\end{equation}
Thus
$ \limp(\cH) = \beta_\cH - \beta^-_\cH$,
and
$\frac{\gap_{\cH}(n)}{n} \to  \limp(\cH)$ as $n \to \infty$,
which completes the proof of Theorem~\ref{thm:AddableLimit}.

We close this section by sketching a useful consequence of purity or near-purity.  Let $\cA$ be a minor-closed class of graphs, with non-empty set $\cH$ of excluded minors. Let $\cA_n$ denote the set of graphs in $\cA$ on vertex set $[n]=\{1,2,\ldots,n\}$, let $a_n = |\cA_n|$, and let
\[
 \gamma(\cA) = \limsup_{n \to \infty} \left ( \frac{a_n}{n!} \right )^{1/n}.
\]
Norine, Seymour, Thomas and Wollan \cite{properSmall} (see also Dvo\v{r}\'ak and Norine \cite{smallClasses}) showed that 
$\gamma(\cA) < \infty$. Now suppose that $\cA$ is addable, that is, the excluded minors are $2$-connected. Then (see, for example \cite{randomMinorClosed}), $( a_n / n! )^{1/n}$ converges to $\gamma(\cA)$ and we say that $\cA$ has \emph{growth constant} $\gamma(\cA)$. Defining $a_{n,q} = |\cA_{n,q}|$ to be the number of graphs in $\cA_n$ with $\lfloor qn \rfloor$ edges, following the methods in Gerke, McDiarmid, Steger and Wei{\ss}l \cite{planarSoda} it can be shown that $(a_{n,q} / n!)^{1/n}$ tends to a limit $\gamma(\cA,q)$. If $\cA$ is pure or near-pure then, again following the analysis in \cite{planarSoda}, we may see that $\gamma(\cA,q)$ as a function of $q$ is log-concave, and hence continuous, for $q \in (1, \beta_\cH)$.


\section{Purity and linear impurity: excluding a leafless graph}
\label{sec.noleaf}

In this section we prove the following lemma, which shows linear impurity for some excluded minors~$H$.  It is a step towards proving both Theorems~\ref{thm:connpure} and~\ref{thm:threeClasses}, and together with Theorem~\ref{thm:AddableLimit} immediately yields Theorem~\ref{thm:PositiveLimit}.
\begin{lemma} \label{lem.noleaf}
  Let $H$ be a connected graph with $\delta(H) \geq 2$, other than $K_3$ and $K_4$.  Then $H$ is linearly impure.
\end{lemma}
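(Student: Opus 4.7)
My plan is to exhibit, for each large $n$, two edge-maximal $H$-free graphs on $n$ vertices whose edge counts differ by $\Omega(n)$, which gives $\limp(H) > 0$. Write $h = v(H)$. Since $\delta(H) \geq 2$ and $H$ is connected, $h \geq 3$ and $H$ contains a cycle. The hypothesis $H \notin \{K_3, K_4\}$ means that either $h \geq 5$, or $h = 4$ with $H \in \{C_4, K_4 - e\}$.

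The core idea is that the asymptotic density of an edge-maximal $H$-free graph is sensitive to the sizes of its blocks, so I build two cactus-type graphs, both $H$-free and edge-maximal, whose block compositions yield different densities. The dense graph $G^+_n$ is a long chain (or tree) of copies of $K_{h-1}$ glued at single cut-vertices. Every block has $h-1 < h$ vertices, so no $H$-minor fits into a single block; when $H$ is $2$-connected no $H$-minor can cross cut-vertices, so $G^+_n$ is $H$-free. Its asymptotic density is $\binom{h-1}{2}/(h-2) = (h-1)/2$. The sparse graph $G^-_n$ is a similar cactus in which some cliques are replaced by bridges --- say, alternating $K_{h-1}$'s with bridges along a spine --- lowering the density to approximately $(h-2)/2$. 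Both are edge-maximal because adding any non-edge fuses two blocks into a single block of $\geq h$ vertices, whose enlarged structure contains $H$ as a subgraph or minor (for suitable $H$). The density gap of about $1/2$ is linear, yielding $\limp(H) \geq 1/2$ in the $2$-connected case.

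The plan needs modification in two subcases. When $H$ is close to $K_h$ (so the fused block may fail to contain $H$ as a minor), I would instead glue the $K_{h-1}$'s along $K_{h-2}$'s rather than at single vertices, producing an $(h-2)$-tree as $G^+_n$; there every added edge completes a $K_h$-minor, guaranteeing edge-maximality. When $H$ has a cut-vertex (so $\Ex(H)$ is decomposable but not addable), the cactus of $K_{h-1}$'s may itself contain $H$ as a minor through its cut-vertices --- for instance, two $K_4$'s glued at a vertex contain the bowtie --- so I would use smaller building blocks or insert separator gadgets, chosen by a short case analysis on the block decomposition of $H$.

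The main obstacle is uniformity: the construction pair must be verified both $H$-free and edge-maximal for every $H$ in the hypothesis, including dense $H$ near $K_h$ and non-$2$-connected $H$ with a cut-vertex. Verifying that adding any non-edge genuinely creates an $H$-minor is particularly delicate, since the fused block has a prescribed structure (roughly $K_{h-1}$ plus a pendant vertex of degree $2$ in the simple case), and we must know that this structure actually contains $H$ as a subgraph or minor for the $H$ in question. The hypothesis $H \neq K_3, K_4$ is precisely what guarantees a valid choice of constructions with distinct asymptotic densities, so that the gap grows linearly rather than being bounded.
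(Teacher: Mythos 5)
Your plan has a genuine gap, and it is located exactly at the step you treat as routine: the claim that both cactus-type graphs are edge-maximal because ``adding any non-edge fuses two blocks into a single block of $\geq h$ vertices, whose enlarged structure contains $H$''. This is false for every $H$ with $\delta(H)\geq 3$, not just for $H$ ``close to $K_h$''. If you glue two copies of $K_{h-1}$ at a cut vertex $v$ and add one edge $xy$ across, the new graph has the $2$-cut $\{v,x\}$; splitting along it, one side is $K_{h-1}$ and the other is $K_{h-1}$ plus a vertex of degree $2$, and an $h$-vertex graph $H$ with $\delta(H)\geq 3$ is a minor of neither (for $3$-connected $H$ this is the standard $2$-sum reduction; for $h=5$ one checks directly that two $K_4$'s sharing a vertex plus an edge has no $K_5$-minor). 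So your dense chain of $K_{h-1}$'s and your sparse bridge-alternating graph are both non-maximal for such $H$ -- e.g.\ for $H=K_{3,3}$ or the Wagner graph -- and completing them to edge-maximal graphs destroys the density comparison on which the whole argument rests. Your fallback of gluing along $K_{h-2}$'s (an $(h-2)$-tree) does not repair this: for sparse $H$ with $\delta(H)\geq3$ it is not even $H$-free (a $4$-tree already contains $K_{3,3}$ as a subgraph), and for $H=K_r$ it only supplies the \emph{dense} example, leaving you with no sparse edge-maximal $K_r$-free graph at all. In the paper the complete case genuinely needs outside input: Wagner's characterisation of edge-maximal $K_5$-free graphs gives clique-sums of copies of $W_8$ as the sparse example (Lemma \ref{lem:completeImpure}), and larger $K_r$ are handled by induction via adding a dominating vertex. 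Finally, your treatment of $H$ with a cut vertex (``smaller building blocks or separator gadgets, chosen by a short case analysis'') is not a construction.

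For comparison, the paper's argument for non-complete $H$ with $\delta(H)\geq 2$ (Lemma \ref{lem:noLeaves}) is uniform in the connectivity $k$ of $H$, including $k=1$: the dense graph is many copies of $K_{h-1}$ all overlapping in a fixed set of $k-1$ vertices, and the sparse graph is obtained from such a configuration by attaching to each clique one extra vertex of degree $\delta(H)-1$ whose neighbourhood misses part of the common set. Edge-maximality is verified by explicit contractions using the minimum degree and connectivity of $H$, and the linear density gap comes from Fact \ref{fact:highlyConnected}, which for non-complete $H$ gives $h+k-3\geq 2\delta(H)-1$, i.e.\ the attached low-degree vertices are strictly cheaper than the clique average. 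That inequality, together with Wagner's theorem for $K_5$, is the substance that your proposal is missing; without a proof that your specific pairs of graphs are simultaneously $H$-free and edge-maximal for \emph{every} $H$ in the hypothesis, the claimed $\Omega(n)$ gap between edge-maximal graphs is not established.
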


We shall often use the following fact proved by Sylvester in $1884$.
\begin{fact}
 \label{fact:frobeniusNumber}
 Let $a_1, a_2$ be a pair of positive coprime integers. Then for every integer $N > a_1 a_2 - a_1 - a_2$ there are some non-negative integers $b_1, b_2$ such that
 \[
  N = a_1 b_1 + a_2 b_2.
 \]
\end{fact}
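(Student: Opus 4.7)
The plan is to reduce the claim to a short computation in residues modulo $a_2$. Since $\gcd(a_1,a_2)=1$, multiplication by $a_1$ is a bijection on $\mathbb{Z}/a_2\mathbb{Z}$, so as $b_1$ ranges over $\{0,1,\ldots,a_2-1\}$ the values $a_1 b_1 \bmod a_2$ hit every residue class exactly once. In particular, for the given integer $N$ there is a unique $b_1$ in this range with $a_1 b_1 \equiv N \pmod{a_2}$. Setting $b_2 := (N - a_1 b_1)/a_2$ then automatically produces an integer satisfying $N = a_1 b_1 + a_2 b_2$, with $b_1\ge 0$ by construction; the only thing to verify is that the lower bound on $N$ forces $b_2 \geq 0$ as well.

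To check this, I would combine the two inequalities $b_1 \leq a_2-1$ and $N > a_1 a_2 - a_1 - a_2$. The first gives $a_1 b_1 \leq a_1(a_2-1) = a_1 a_2 - a_1$, and therefore
\[
a_2 b_2 \;=\; N - a_1 b_1 \;>\; (a_1 a_2 - a_1 - a_2) - (a_1 a_2 - a_1) \;=\; -a_2.
\]
Since $a_2 b_2$ is an integer multiple of $a_2$ that is strictly greater than $-a_2$, it must be non-negative, so $b_2 \geq 0$ as required.

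There is no real obstacle here: this is the classical Sylvester--Frobenius ``two coin'' result, and the residue-class argument above is essentially the shortest proof available. A stylistic alternative would be to start from B\'ezout's identity $1 = a_1 x + a_2 y$, write $N = a_1(Nx) + a_2(Ny)$, and then shift coefficients by multiples of $(a_2, -a_1)$ until the first coefficient lies in $\{0,\ldots,a_2-1\}$; this reproduces the same $b_1,b_2$ by a slightly more roundabout route. I would present the residue-class version for brevity, as the bound $N > a_1 a_2 - a_1 - a_2$ falls out immediately from the range constraint on $b_1$.
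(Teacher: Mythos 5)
Your argument is correct and complete: the residue-class choice of $b_1\in\{0,\ldots,a_2-1\}$ with $a_1b_1\equiv N\pmod{a_2}$ is well defined by coprimality, and the computation $a_2b_2=N-a_1b_1>(a_1a_2-a_1-a_2)-(a_1a_2-a_1)=-a_2$ correctly forces $b_2\geq 0$. Note that the paper itself offers no proof of this statement --- it is recorded as a Fact and attributed to Sylvester (1884) --- so there is no argument in the paper to compare against; your write-up simply supplies the standard short proof of the classical two-coin Sylvester--Frobenius result, which is a perfectly reasonable thing to include if a self-contained treatment is wanted.
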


Let us call a vertex $v$ in a connected $h$-vertex graph $H$ a \emph{strong separating vertex} if each component of $H-v$ has at most $h-3$ vertices (so $v$ is a separating vertex which does not just cut off a single leaf). In order to prove Lemma~\ref{lem.noleaf} we first consider complete graphs, and then non-complete graphs with no leaves. In the next lemma we deal with complete graphs.
\begin{lemma}
 \label{lem:completeImpure}
 For each $r \geq 5$ the class of $K_r$-free graphs satisfies $\limp(K_r) \geq \frac76$.
\end{lemma}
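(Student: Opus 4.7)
My plan is to exhibit, for each $r \ge 5$ and every sufficiently large $n$, two $n$-vertex edge-maximal $K_r$-minor-free graphs whose edge counts differ by $\tfrac{7}{6}n - O_r(1)$, and thereby conclude $\limp(K_r) \ge \tfrac{7}{6}$.

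For the dense side I take any $(r-2)$-tree on $n$ vertices: it is edge-maximal $K_r$-minor-free (treewidth $r-2$ rules out any $K_r$-minor, and adding any edge creates a $K_r$-subgraph), with exactly $(r-2)n - \binom{r-1}{2}$ edges, giving $M^+_{K_r}(n) \ge (r-2)n - \binom{r-1}{2}$.

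For the sparse side the key building block is the Wagner graph $V_8$ --- the circulant on $v_1 \cdots v_8$ with the four diameter chords $v_iv_{i+4}$ --- which is an edge-maximal $K_5$-minor-free graph on $8$ vertices with $12$ edges, of density $\tfrac{3}{2}$. Using Wagner's characterisation of $K_5$-minor-free graphs as precisely the $3$-clique-sums of planar graphs and copies of $V_8$, I construct, for every sufficiently large $m$, an edge-maximal $K_5$-minor-free graph $H_m$ on $m$ vertices with
\[
 e(H_m) \;=\; \tfrac{11}{6}\, m + O(1),
\]
by gluing together many copies of $V_8$ along edges and then absorbing any non-edges forced by edge-maximality via small $3$-clique-sums with planar triangulations. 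Fact~\ref{fact:frobeniusNumber} is used to realise every large value of $m$ from the two basic increment sizes produced by the building blocks. To lift from $K_5$ to $K_r$ with $r \ge 5$, I form the graph join $G_n := H_m + K_{r-5}$ with $m = n - (r-5)$. Any $K_r$-minor in $G_n$ would contract to a $K_5$-minor in $H_m$ (impossible); and any missing edge of $G_n$ necessarily lies inside $H_m$, where it creates a $K_5$-minor, which combined with the joined $K_{r-5}$ gives a $K_r$-minor in $G_n$. So $G_n$ is edge-maximal $K_r$-minor-free, with
\[
 e(G_n) \;=\; e(H_m) + \tbinom{r-5}{2} + m(r-5) \;=\; \bigl(r - \tfrac{19}{6}\bigr)\, n + O_r(1).
\]
Since $(r-2) - (r - \tfrac{19}{6}) = \tfrac{7}{6}$, subtracting gives $\gap_{K_r}(n) \ge \tfrac{7}{6}n - O_r(1)$ for all large $n$, and hence $\limp(K_r) \ge \tfrac{7}{6}$.

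The main obstacle is producing the sparse family $H_m$: because $V_8$ has no triangles it can only be $3$-clique-summed along cliques of size at most $2$, and a naive chain of $V_8$'s glued along edges is in fact \emph{not} edge-maximal (one verifies directly that extra chords between different copies can be added without creating a $K_5$-minor). The construction must therefore carefully absorb the forced extra edges by small planar triangulations while keeping the overall density arbitrarily close to $\tfrac{11}{6}$, with Wagner's structure theorem used to certify both $K_5$-minor-freeness and edge-maximality. Once $H_m$ is in hand, the lift to $G_n$ via the join with $K_{r-5}$ and the final arithmetic are routine.
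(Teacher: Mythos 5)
Your overall architecture is fine: a dense family against a sparse family built from Wagner graphs, with the lift from $K_5$ to $K_r$ by joining a clique (this join argument is exactly the paper's induction step of repeatedly adding a dominating vertex), and the final arithmetic giving $\tfrac{7}{6}$ is correct. The dense side is also fine in substance, since $(r-2)$-trees are indeed edge-maximal $K_r$-minor-free with $(r-2)n-\binom{r-1}{2}$ edges, although your stated reason is wrong: adding an edge to an $(r-2)$-tree need not create a $K_r$ \emph{subgraph} (take the $3$-tree built from $K_4$ on $\{1,2,3,4\}$ by adding $5$ adjacent to $\{1,2,3\}$ and $6$ adjacent to $\{1,2,5\}$, then add $\{4,6\}$); what is true is that it creates a $K_r$-\emph{minor}, because a minimal separator of the two endpoints is an $(r-2)$-clique and contracting the two sides onto the endpoints yields $K_r$.

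The genuine gap is the sparse family $H_m$, which carries the whole lemma and which you never actually construct: you only promise to ``absorb forced extra edges by small $3$-clique-sums with planar triangulations'', with no explicit construction and no verification of edge-maximality or of the density $\tfrac{11}{6}$. Moreover, the obstacle you invoke to justify this detour is false: a clique-sum of copies of $W_8$ glued along an edge (in particular $k$ copies all sharing one common edge, which is the paper's construction, with $11k+1$ edges on $6k+2$ vertices) \emph{is} edge-maximal $K_5$-minor-free. Indeed, if a new edge $\{x,y\}$ joins different copies and $\{u,v\}$ is the shared edge separating them, then by $2$-connectivity of $y$'s side one can contract two vertex-disjoint paths so that $y$ becomes adjacent to $u$, $v$ and $x$; since $W_8$ is triangle-free, $x$ misses at least one of $u,v$, and since $W_8$ is itself edge-maximal $K_5$-minor-free, the copy containing $x$ together with the contracted set at $y$ (which supplies the missing adjacency) contains a $K_5$-minor. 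So the ``extra chords between different copies'' you claim can be added do create $K_5$-minors; your direct verification is mistaken, and the naive chain already achieves $e=\tfrac{11}{6}n+O(1)$ (general $n$ is handled, as in the paper, by attaching one small triangulation or by Fact~\ref{fact:frobeniusNumber}). As written, your argument neither proves edge-maximality of any sparse family nor supplies the promised replacement, so the bound on $M^-_{K_r}(n)$ — and hence the lemma — is not established.
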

\begin{proof}
 We prove the lemma by induction on $r$. First, let $r = 5$. Wagner \cite{wagnerK5} showed that any edge-maximal $K_5$-free graph on at least $4$ vertices can be constructed recursively, by identifying edges or triangles, from edge-maximal planar graphs (i.e., triangulations) and copies of the Wagner graph (recall that the Wagner graph is formed from the cycle $C_8$ by joining the four opposite pairs of vertices, hence it has $8$ vertices and $12$ edges). If $n = 6k+2$, we can take $G_1$ to be an arbitrary plane triangulation on $n$ vertices with $e(G_1) = 3n-6 = 18k$. We then take $G_2$ to be a clique-sum of $k$ copies of the Wagner graph $W_8$ that all overlap in one common edge. Then $e(G_2) = 11k+1$ and
 \[
  \frac{e(G_1) - e(G_2)}{n} = \frac{7k-1}{6k+2} \to 7/6
 \]
 as $k \to \infty$. For general $n$ we can modify the construction of $G_2$ by taking a clique-sum of $k$ copies of $W_8$ and a triangulation on $3 \leq m \leq 7$ vertices (in fact, by the above characterisation of the edge-maximal $K_5$-free graphs, it is easy to check that $\limp(K_5) = \frac76$). Therefore the lemma holds for $r=5$.
 
 The statement for $r+1$ follows from the statement for $r$ by observing that if we take any edge-maximal $K_r$-free graph $G$, add to it one vertex and connect it to all vertices of $G$, then the resulting graph is edge-maximal $K_{r+1}$-free.
\end{proof}

\begin{remark}
Recall from \cite{completeMinors} that $M^+_{K_r}(n) \sim \alpha \, r \sqrt{\log r} \, n$ for $\alpha \approx 0.319$, while the constructions in Lemma \ref{lem:completeImpure} have both $e(G_1)$ and $e(G_2)$ that grow linearly with $r$. Thus we see that $\limp(K_r) \sim \alpha \, r \sqrt{\log r}$.
\end{remark}
\smallskip

We next consider connected graphs that are not complete but do not have any leaves. We say that $G$ has \emph{connectivity $k$} if $k$ is the minimum size of a vertex cut of $G$ (except that, for $n \geq 2$, $K_n$ has connectivity $n-1$).  Also, we say that $G$ is $j$-connected if $G$ has connectivity at least $j$.  Recall that $\delta(G)$ denotes the minimum degree and, for $u \in V(G)$, let
\[
 N(u) = \{ v \in V(G) : \{u,v\} \in E(G)\}
\]
denote the neighbourhood of $u$ in $G$. The following simple fact will be very useful to us.  

\begin{fact}
\label{fact:highlyConnected}
 Let $G$ be a non-complete graph on $n$ vertices with $\delta(G) = \delta$. Then $G$ has connectivity at least $2\delta-n+2$.
\end{fact}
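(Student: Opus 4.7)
The plan is a standard vertex-cut argument, exploiting the minimum degree to lower-bound the sizes of the components one gets after removing a cut. First I would let $S$ be a minimum vertex cut of $G$, of size $\kappa = \kappa(G)$; this exists precisely because $G$ is non-complete. By definition $G - S$ has at least two components, so I pick two of them, $A$ and $B$, with $|A| = a$ and $|B| = b$. Clearly $a + b \leq n - \kappa$, which will be the easy half of the final inequality.

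Next I would bound $a$ and $b$ from below using the minimum-degree hypothesis. Any vertex $v \in A$ has all its neighbours inside $(A \setminus \{v\}) \cup S$, because no edge of $G$ can cross from $A$ to another component of $G - S$. Therefore
\[
\delta \;\leq\; \deg(v) \;\leq\; (a-1) + \kappa,
\]
which rearranges to $a \geq \delta - \kappa + 1$, and the same argument applied to a vertex of $B$ gives $b \geq \delta - \kappa + 1$. Adding these bounds yields $a + b \geq 2\delta - 2\kappa + 2$.

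Finally, combining the two inequalities $2\delta - 2\kappa + 2 \leq a + b \leq n - \kappa$ and solving for $\kappa$ gives $\kappa \geq 2\delta - n + 2$, as desired. There is no serious obstacle here; the only thing to be mindful of is that the bound is vacuous when $2\delta - n + 2 \leq 0$ (i.e.\ when $\delta \leq (n-2)/2$), so the fact only carries content for graphs whose minimum degree is large relative to $n$, which is exactly the regime in which it will later be applied. One should also note that the convention $\kappa(K_n) = n-1$ explicitly excluded in the statement is precisely what forces the non-completeness assumption: without it, the argument breaks down at the very first step since $K_n$ has no vertex cut.
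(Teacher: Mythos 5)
Your proof is correct, but it takes a different (equally elementary) route from the paper. The paper fixes a non-adjacent pair $u,v$ and uses inclusion--exclusion on their neighbourhoods: $2\delta \leq \deg(u)+\deg(v) = |N(u)\cup N(v)| + |N(u)\cap N(v)| \leq n-2+|N(u)\cap N(v)|$, so $u$ and $v$ have at least $2\delta-n+2$ common neighbours, all of which must lie in any cut separating them. You instead start from a minimum cut $S$, lower-bound the sizes of two components of $G-S$ via the minimum degree ($a,b \geq \delta-\kappa+1$), and play this off against $a+b\leq n-\kappa$. Both arguments are two lines long and both handle the degenerate cases (disconnected $G$, vacuous bound when $\delta \leq (n-2)/2$) without trouble; the paper's version has the mild advantage of yielding the slightly stronger intermediate statement that every non-adjacent pair has at least $2\delta-n+2$ common neighbours, though that extra strength is never used later. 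Your closing remarks about the non-completeness hypothesis and the regime in which the bound has content are accurate.
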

\begin{proof}
 Let $u$ and $v$ be a non-adjacent pair of vertices.  Then
 \[ 
  2 \delta \leq \deg(u) + \deg(v) = |N(u) \cup N(v)| + |N(u) \cap N(v)| \leq n -2+ |N(u) \cap N(v)|,
 \]
so $u$ and $v$ have at least $2\delta-n+2$ common neighbours, and any vertex cut separating $u$ and $v$ must contain all of these vertices.
\end{proof}

\begin{lemma}
\label{lem:noLeaves}
 Let $H$ be a connected non-complete graph on $h \geq 4$ vertices with $\delta: = \delta(H) \geq 2$. Then the class of $H$-free graphs satisfies $\limp(H) \geq \frac1{2h}$.
\end{lemma}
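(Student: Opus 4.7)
My plan is to construct, for every sufficiently large $n$, two edge-maximal $H$-free graphs on $n$ vertices whose edge counts differ by at least $n/(2h) + O(1)$, thereby yielding $\limp(H) \geq 1/(2h)$. The basic building block is $K_{h-1}$, which is trivially $H$-free since it has fewer than $h$ vertices.

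Using this block I build two chain graphs. The \emph{dense chain} $G_A$ glues $k$ copies of $K_{h-1}$ in a path, identifying a single vertex between consecutive copies, giving $k(h-2)+1$ vertices and $k\binom{h-1}{2}$ edges (asymptotic density $(h-1)/2$). The \emph{sparse chain} $G_B$ joins $k$ copies of $K_{h-1}$ in a path via single bridging edges between otherwise-disjoint copies, giving $k(h-1)$ vertices and $k\binom{h-1}{2}+(k-1)$ edges (asymptotic density $(h-2)/2+1/(h-1)$). The per-vertex density gap is $(h-3)/(2(h-1))$, which is at least $1/(2h)$ for $h\geq 4$ (equivalent to $h^2-4h+1\geq 0$). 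Because the two chain types add $h-2$ or $h-1$ vertices per new copy and $\gcd(h-2,h-1)=1$, Sylvester's formula (Fact~\ref{fact:frobeniusNumber}) lets me mix the two patterns within a single chain to realize every sufficiently large $n$ with both a dense-heavy and a sparse-heavy edge count, giving the claimed linear gap.

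The two key verifications are $H$-freeness and edge-maximality of both $G_A$ and $G_B$. When $H$ is 2-connected, $H$-freeness follows because any $H$-minor would have to be realized inside a 2-connected subgraph of the host, and all 2-connected subgraphs of $G_A$ or $G_B$ are contained in individual $K_{h-1}$-blocks of only $h-1$ vertices, too small to carry $H$. Edge-maximality follows because any new edge creates a 2-connected region spanning multiple $K_{h-1}$-blocks, and by contracting well-chosen paths through the intermediate cliques one can exhibit $H$ as a minor, exploiting that each $K_{h-1}$ contributes $\binom{h-1}{2}$ edges and so realizes every $(h-1)$-vertex graph. Verifying this carefully for every possible position of the added edge (and for every admissible $H$) is the main technical step.

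The main obstacle is handling non-2-connected $H$: for such $H$, the $K_{h-1}$-gluings may themselves already contain $H$ as a minor (for instance, two $K_4$'s glued at a vertex contain two triangles sharing that vertex, so $G_A$ is not $H$-free when $h=5$ and $H$ is the butterfly). In that case I would replace $K_{h-1}$ by a smaller dense $H$-free gadget whose block structure cannot realize the block-tree of $H$ after clique-summing, and rebuild the two chain constructions from this gadget; a careful choice of the gadget (exploiting that $H$ is non-complete and $h\geq 4$) should ensure that the density gap is still at least $1/(2h)$ and that the coprimality of the two step sizes needed for Sylvester's formula is preserved, completing the proof.
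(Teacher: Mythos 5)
Your overall strategy (one dense $H$-free graph versus one sparse \emph{edge-maximal} $H$-free graph on the same vertex set, with Sylvester's theorem to hit every large $n$) is the same shape as the paper's, but the step you defer as ``the main technical step'' is precisely where the argument breaks, and it cannot be repaired for your constructions. The claim that adding any edge to $G_A$ or $G_B$ yields an $H$-minor is false whenever $H$ has connectivity at least $3$ --- for instance $H=K_{3,3}$ or $H=K_5$ minus an edge, both of which are covered by the lemma. Concretely, add one edge $u'v'$ to $G_B$ between two cliques joined by a bridge $uv$: the pair $\{u,u'\}$ is still a $2$-cut, and more generally the new edge together with the bridge provides only two vertex-disjoint connections between distinct $K_{h-1}$-blocks. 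Since a $3$-connected $H$ remains connected after deleting any two vertices, any model of $H$ would have to place all but at most two branch sets inside a single $K_{h-1}$ (only $h-1<h$ vertices), and the at most two branch sets through the cut can only represent vertices of degree at most $2$, impossible as $\delta(H)\ge 3$. So $G_B$ plus the new edge is still $H$-free, $G_B$ is \emph{not} edge-maximal, and $e(G_B)$ gives no upper bound on $M^-_H(n)$; its edge-maximal completions can be far denser, so the claimed gap does not follow. (The dense graph is unproblematic, since for $M^+_H(n)$ plain $H$-freeness suffices; the failure is entirely on the sparse side, which is the side that matters.) Contracting paths through cut vertices and bridges can only ever attach low-degree vertices to one clique, so no choice of contractions exhibits a $3$-connected $H$. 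Your fallback for $H$ of connectivity $1$ (replace $K_{h-1}$ by an unspecified gadget) is likewise a hope rather than an argument, and the lemma must cover such graphs (e.g.\ the bowtie).

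The paper's proof avoids both problems by calibrating the construction to the connectivity $k$ of $H$. The dense graph is a union of many copies of $K_{h-1}$ all overlapping in one fixed set of $k-1$ vertices, so an $H$-minor, needing connectivity $k$, cannot straddle two copies; the sparse graph uses fewer such cliques plus extra vertices of degree $\delta-1$, each attached inside a distinct clique so as to miss part of the common $(k-1)$-set. Edge-maximality of the sparse graph is then verified directly: any added edge either (after a contraction) merges two cliques into an overlap of size $k$, or raises an attached vertex to degree $\delta$, and in each case an $H$-minor appears. Finally the density gap is computed exactly, and the inequality $k\ge 2\delta-h+2$ (Fact \ref{fact:highlyConnected}) is what guarantees the gap is at least $(h-k)m/2$, giving $\limp(H)\ge \tfrac{1}{2(h-k+1)}\ge\tfrac1{2h}$; this dependence on $k$ and $\delta$ is exactly the information your uniform chain construction discards. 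To rescue your approach you would need a sparse construction whose edge-maximality can be verified for every connectivity value $1\le k\le h-2$ and every $\delta\ge 2$, which is essentially what the paper's $(k-1)$-overlap plus degree-$(\delta-1)$ attachment accomplishes.
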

\begin{proof}
Since $H$ is connected, $H$ has connectivity $k$ for some $k \geq \max \{2\delta-h+2, 1\}$. We first show that for all $m \geq 1$ there exist two graphs $G_1, G_2$, both on
\[
 n = (h-k)(h-k+1) m+k-1
\]
vertices, that are edge-maximal $H$-free and such that
\[
 e(G_1) - e(G_2) \geq \frac{(h-k)m}{2} = (1+o(1))\frac{n}{2(h-k+1)}.
\]

We construct the ``dense'' graph $G_1$ as follows. We take $(h-k+1)m$ copies of $K_{h-1}$ that all overlap in a fixed set of $k-1$ vertices. Clearly $G_1$ is $H$-free since $H$ has connectivity $k$ and trying to fit an $H$-minor in $G_1$ we would need to find it across more than one of the copies of $K_{h-1}$. Also, $G_1$ has $(h-k)(h-k+1) m+k-1$ vertices and
\[
\begin{split}
e(G_1) & = (h-k+1) m \left ( \binom{h-k}{2} + (h-k)(k-1) \right ) + \binom{k-1}{2} \\
       & = (h-k) m (h-k+1) \frac{h+k-3}{2} + \binom{k-1}{2}.
\end{split}
\]

We construct the ``sparse'' graph $G_2$ similarly. We start by taking $(h-k)m$ copies of $K_{h-1}$ that all overlap in a fixed set $I$ of $k-1$ vertices. The resulting graph $G'_2$ has $(h-k)^2 m + k-1$ vertices, i.e., $(h-k)m$ fewer that $G_1$. We complete the construction of $G_2$ by adding these $(h-k)m$ missing vertices and joining each of them to $\delta-1$ vertices in a distinct copy of $K_{h-1}$ in such a way that the neighbourhood of each new vertex does not contain the whole of $I$ (see Figure \ref{figure:noLeaves}).  Note that $G_2$ is $H$-free: for if $G_2$ had a minor $H$ then so would $G'_2$ (since vertices $v$ of degree $< \delta(H)$ with $N(v)$ complete are redundant), and we may see as for $G_1$ that $G_2'$ has no minor $H$.  We have
\[
\begin{split}
e(G_2) & = (h-k) m \left ( \binom{h-k}{2} + (h-k)(k-1) \right ) + \binom{k-1}{2} + (h-k)m(\delta-1) \\
       & = (h-k) m \left ( (h-k) \frac{h+k-3}{2} + \delta-1 \right ) + \binom{k-1}{2}.
\end{split}
\]

\begin{figure}[htb] \centering
  \begin{tikzpicture}
    \tikzstyle{vertex}=[draw,shape=circle,minimum size=5pt,inner sep=0pt]

    \draw (2,0) ellipse (2cm and 1cm);
    \draw (0,0) ellipse (2cm and 1cm);
    \draw (1,1) ellipse (1cm and 2cm);
    \draw (1,-1) ellipse (1cm and 2cm);
    \draw[black] (-1,0) node {$h-k$};
    \draw[black] (1,2) node {$h-k$};
    \draw[black] (1,-2) node {$h-k$};
    \draw[black] (3,0) node {$h-k$};
    \draw[black] (1,0) node {$k-1$};
    
    \foreach \name/\x/\y in {1/-2/-2, 2/3/-3, 3/4/2, 4/-1/3}
      \node[vertex] (P-\name) at (\x,\y) {~};
    \foreach \x/\y in {-1/-0.45, -0.75/-0.5, -0.5/-0.55}
      \draw (P-1) -- (\x,\y);
    \foreach \x/\y in {1.65/-2, 1.7/-1.75, 1.75/-1.5}
      \draw (P-2) -- (\x,\y);
    \foreach \x/\y in {3/0.45, 2.75/0.5, 2.5/0.55}
      \draw (P-3) -- (\x,\y);
    \foreach \x/\y in {0.35/2, 0.3/1.75, 0.25/1.5}
      \draw (P-4) -- (\x,\y);
    
  \end{tikzpicture}
  \caption{Graph $G_2$ as defined in Lemma \ref{lem:noLeaves}.}
  \label{figure:noLeaves}
\end{figure}
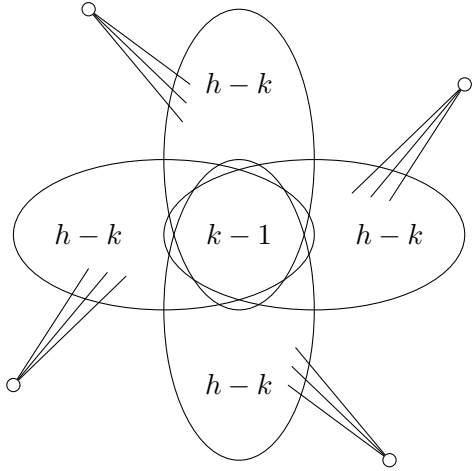

Consequently,
 \[
 e(G_1) - e(G_2) = (h-k) m \left ( \frac{h+k-3}{2} - \delta + 1 \right ).
\]
By Fact \ref{fact:highlyConnected} we have $h+k-3 \geq 2 \delta - 1$ hence
 \[
 e(G_1) - e(G_2) \geq \frac {(h-k) m}{2} = \frac{n-k+1}{2(h-k+1)} \sim \frac{n}{2(h-k+1)}.
\]

To show that $G_2$ is edge-maximal $H$-free, assume that we add an edge $e$ to $G_2$. If $e$ connects vertices not in $I$ in two distinct copies of $K_{h-1}$, then by contracting it we obtain two copies of $K_{h-1}$ that overlap in $k$ vertices and the resulting graph contains $H$ as a subgraph because $H$ has connectivity $k$. If $e$ connects a vertex $v$ of degree $\delta-1$ to a vertex in the copy of $K_{h-1}$ that contains the whole of $N(v)$ then this graph contains $H$ as a subgraph because now $\deg(v) = \delta = \delta(H)$. If finally $e$ connects a vertex $v$ of degree $\delta-1$ to another vertex $u$ that either has degree $\delta-1$ or is located in some other copy of $K_{h-1}$ then we can contract the path between $u$ and a vertex in $I \setminus N(v)$. The resulting graph again contains $H$ as a subgraph, because now $\deg(v) = \delta = \delta(H)$. 

To complete the proof of the lemma we observe that $h-k$ and $h-k+1$ are coprime. Thus by Fact \ref{fact:frobeniusNumber} for all $n$ large enough we can build approximations $G'_1, G'_2$ of the above graphs $G_1, G_2$ using the building blocks described above ($K_{h-1}$, and $K_{h-1}$ plus a vertex of degree $\delta-1$), with $\frac{e(G'_1) - e(G'_2)}{n} \to \frac{1}{2(h-k+1)} \geq \frac{1}{2h}$.
\end{proof}

At this stage, we have seen by Lemmas~\ref{lem:completeImpure} and~\ref{lem:noLeaves} that, if the connected graph $H$ has $\delta(H) \geq 2$ and $H$ is not $K_3$ or $K_4$, then $\limp(H)>0$;  that is, we have proved Lemma~\ref{lem.noleaf}.  Now Theorem~\ref{thm:PositiveLimit} follows from Theorem~\ref{thm:AddableLimit}.


\section{Purity, near-purity and linear impurity: excluding a graph with a leaf}
\label{sec.leaf}

In this section we complete the proof of Theorem~\ref{thm:threeClasses}, which says that for a connected excluded minor $H$ there are only the three possibilities of purity, near-purity or linear impurity for $\Ex(H)$.  We first deal quickly with graphs $H$ which have a strong separating vertex, treating the claw graph $K_{1,3}$ separately in Observation \ref{obs:claw}; and then we consider graphs $H$ with at least one leaf and no strong separating vertex.

\begin{lemma}
 \label{lem:separatingVertex}
 Let $H$ be a connected graph on $h \geq 5$ vertices which contains a strong separating vertex~$v$. Then the class of $H$-free graphs satisfies $\limp(H) \geq \frac12$.
\end{lemma}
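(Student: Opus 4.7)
My approach is to construct, for each sufficiently large $n$, two edge-maximal $H$-free graphs $G_1^{(n)}$ (dense) and $G_2^{(n)}$ (sparse) on $n$ vertices whose edge counts differ by at least $\tfrac{n}{2} - O(1)$. Combined with Fact~\ref{fact:frobeniusNumber}, applied to the two building-block sizes used below, this gives $\limp(H) \geq \tfrac{1}{2}$.

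For $G_1^{(n)}$ I will take a disjoint union of roughly $n/(h-1)$ copies of $K_{h-1}$. This is $H$-free because $H$ is connected on $h$ vertices while every component of $G_1^{(n)}$ has only $h-1$ vertices, so an $H$-minor (which must have all $h$ branch sets inside one component) cannot fit. To verify edge-maximality I show that any added edge, which must be a bridge $u_1u_2$ between two copies $Q_1, Q_2$ of $K_{h-1}$, forces an $H$-minor: take the branch set of $v$ to be $\{u_1, u_2\}$, and distribute the components of $H-v$ between the two ``hemispheres'' $Q_i \setminus \{u_i\} \cong K_{h-2}$. Since each component has size at most $h-3$ and their total size is $h-1$, placing any one component into one hemisphere and the remaining components into the other gives two groups of total sizes at most $h-2$; all required edges are realized because each $K_{h-2}$ is complete and $B_v = \{u_1, u_2\}$ is adjacent in $G$ to every vertex of both hemispheres. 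Thus $G_1^{(n)}$ has $\tfrac{h-2}{2}n + O(1)$ edges and is edge-maximal.

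For $G_2^{(n)}$, in the case $\delta(H) \geq 2$ I take $K_{h-1}$ with $n-(h-1)$ pendants attached to a fixed vertex $u \in V(K_{h-1})$, which has $n + O(1)$ edges. It is $H$-free: in any putative $H$-minor, no branch set may be a singleton pendant (that would correspond to a leaf of $H$), so all $h$ branch sets must live inside the clique together with at most one ``augmented'' branch set containing $u$ and some pendants; a simple vertex count then forces a contradiction. Edge-maximality: adding a pendant-pendant edge $w_1w_2$ creates the triangle $\{u,w_1,w_2\}$ sharing the vertex $u$ with $K_{h-1}$; taking $B_v = \{u\}$, the complement is $K_{h-2} \cup \{w_1,w_2\} \cup \{\text{singletons}\}$, and the ``$\{w_1,w_2\}$ slot'' together with $K_{h-2}$ accommodates a partition of $H-v$'s components (any size-$2$ component into $\{w_1,w_2\}$ and the rest, of total size at most $h-3$, into $K_{h-2}$), yielding $H$ as a minor. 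A pendant-to-clique addition creates a similar configuration and is handled by the same branch-set argument. When $H$ has leaves, the naive $G_2^{(n)}$ may already contain $H$ as a subgraph, and I then replace single pendants with small gadgets---for instance, pendant paths of length two, or pendants distributed over several clique vertices---chosen to block the relevant leaf embedding while preserving the edge count $n + O(1)$.

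The resulting gap is
\[
 e(G_1^{(n)}) - e(G_2^{(n)}) \;\geq\; \tfrac{h-2}{2}n - n - O(1) \;=\; \tfrac{h-4}{2}n - O(1) \;\geq\; \tfrac{n}{2} - O(1),
\]
using $h \geq 5$. The hard part will be the leaf case: one must check that a suitably modified sparse $G_2^{(n)}$ is simultaneously $H$-minor-free and edge-maximal, and the appropriate gadget (and its verification) depends on the placement of $H$'s leaves relative to~$v$ and on the sizes of the components of $H-v$; a case analysis along these lines will dispose of the remaining cases.
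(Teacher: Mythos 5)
Your dense construction (disjoint copies of $K_{h-1}$) and its edge-maximality argument are sound and coincide with half of the paper's proof, but the sparse half has a genuine gap. In the $\delta(H)\geq 2$ case, your edge-maximality check for ``$K_{h-1}$ plus pendants at $u$'' needs, after adding a pendant--pendant edge $w_1w_2$, to place some component of $H-v$ inside the pair $\{w_1,w_2\}$; the hypothesis only says each component of $H-v$ has at most $h-3$ vertices, and all of them may have size at least $3$. Concretely, let $H$ be two copies of $K_4$ sharing the vertex $v$ (so $h=9$, both components of $H-v$ are triangles, $v$ is a strong separating vertex, $\delta(H)=3$). In $K_8$ plus pendants at $u$ plus the edge $w_1w_2$, any branch set meeting a pendant but avoiding $u$ has all its outside neighbours in $\{u,w_1,w_2\}$, so it cannot represent a vertex of degree $3$; hence every branch set of a putative $H$-minor either lies inside the $K_8$ or contains $u$, giving at most $8<h$ branch sets. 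So adding $w_1w_2$ (and likewise a pendant-to-clique edge) creates no $H$-minor: your sparse graph is not edge-maximal, and its edge-maximal completions may be much denser than $n+O(1)$, which is exactly what you would need to exclude. The remaining case, where $H$ has a leaf, is deferred to unspecified ``gadgets'' and case analysis, so it is not proved at all; note the lemma must cover such $H$ (e.g.\ a vertex with two leaves attached), since graphs with $\delta(H)\geq 2$ are already handled elsewhere in the paper.

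The paper's proof avoids all of this by choosing the sparse graph to be disjoint copies of $K_{h-2}$ (with the dense graph the disjoint $K_{h-1}$'s, on a common number $n=(h-1)(h-2)m$ of vertices). Both are trivially $H$-free, and one short argument gives edge-maximality for both: add any edge, contract it, and place $v$ at the resulting common vertex of two cliques of size $h-2$ (resp.\ $h-1$); since every component of $H-v$ has at most $h-3$ vertices and $h\geq 5$, the components of $H-v$ pack into the two cliques, so $H$ appears as a minor. This is precisely where the strong-separating-vertex hypothesis is used --- it is what makes the $K_{h-2}$-based graph edge-maximal --- and it yields $e(G_1)-e(G_2)=n/2$ exactly, with Fact~\ref{fact:frobeniusNumber} and the coprimality of $h-1$ and $h-2$ handling general $n$. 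If you want to salvage your pendant-based sparse construction you would have to prove edge-maximality (or bound the density of an edge-maximal completion) for every admissible $H$, including the leaf case; as it stands, that step fails.
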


\begin{proof}
 The construction here is very simple. For $m \geq 1$, let $G_1$ consist of $(h-2)m$ disjoint copies of $K_{h-1}$ and let $G_2$ consist of $(h-1)m$ disjoint copies of $K_{h-2}$. Both graphs contain $n = (h-1)(h-2)m$ vertices and are trivially $H$-free. They are edge-maximal $H$-free because whenever we add an edge $e$ to either $G_1$ or $G_2$, we can then contract it and identify the resulting common vertex of two cliques of size either $h-1$ or $h-2$ with $v$. The resulting graph contains $H$ as a subgraph because $h \geq 5$ and consequently $h-2 +h- 3 \geq h$.
 
 We clearly have $e(G_1) = (h-1)(h-2)^2 m/2$ and $e(G_2) = (h-1)(h-2)(h-3) m/2$. Hence
 \[
  e(G_1) - e(G_2) = \frac{(h-1)(h-2) m}{2} = \frac{n}{2}.
 \]
 The construction for general $n$ follows easily from Fact \ref{fact:frobeniusNumber} since $h-1$ and $h-2$ are coprime.
\end{proof}

\begin{observation}
\label{obs:claw}
The only connected graph on $h=4$ vertices with a strong separating vertex is the claw $K_{1,3}$. The class of $K_{1,3}$-free graphs is not pure, since for all $n \geq 4$ the cycle $C_n$ and the union of a cycle $C_{n-1}$ and an isolated vertex are edge-maximal $K_{1,3}$-free with $n$ and $n-1$ edges respectively.

However, this class is near-pure with $\gap_{K_{1,3}}(n) = 1$ for all $n \geq 4$. Indeed, note that any connected component of an edge-maximal $K_{1,3}$-free graph $G$ on $n$ vertices is either a cycle, an edge or an isolated vertex. Moreover, $G$ can have at most one component of size less than $3$ to preserve edge-maximality. Hence $G$ must have either $n$ or $n-1$ edges.
\end{observation}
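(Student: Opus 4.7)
The plan is to begin by noting that $K_{1,3}$ is a minor of $G$ if and only if $G$ has a vertex of degree at least $3$. Indeed, any vertex of degree $\geq 3$ gives $K_{1,3}$ as a subgraph (hence a minor); conversely, if $\Delta(G) \leq 2$ then every subgraph has $\Delta \leq 2$, and no single-edge contraction can push the maximum degree above $2$ plus the loss of one vertex, so no minor has a vertex of degree~$3$. Therefore the $K_{1,3}$-free graphs are precisely the graphs of maximum degree at most~$2$, and every component of such a graph is a cycle, a path, or an isolated vertex.

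Next, I would characterise the edge-maximal members of this class. Let $G$ be edge-maximal $K_{1,3}$-free on $n \geq 4$ vertices. First, no component is a path on $\geq 3$ vertices, since adding the edge between its endpoints closes it into a cycle while preserving maximum degree~$2$. Second, no edge can be added between two distinct components without raising some degree to~$3$, unless both endpoints have degree $\leq 1$, i.e.\ both components are $K_1$ or $K_2$; but in that case the new edge would still leave the graph of maximum degree $\leq 2$, contradicting maximality. Thus $G$ has at most one component which is $K_1$ or $K_2$, and every other component is a cycle.

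With this structural description, the edge count is immediate: if every component is a cycle, then $e(G) = n$; if there is an exceptional $K_1$, then $e(G) = n-1$ (cycles on $n-1$ vertices contribute $n-1$ edges); if the exceptional component is a $K_2$, the cycles account for $n-2$ edges and the $K_2$ for one more, again giving $n-1$. Hence $E_{K_{1,3}}(n) \subseteq \{n-1, n\}$, so $\gap_{K_{1,3}}(n) \leq 1$.

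Finally, to see that the gap is exactly $1$ for all $n \geq 4$, I would exhibit the two examples from the statement: $C_n$ and $C_{n-1} \cup K_1$. Both fit the structure above, both are edge-maximal (since any additional edge would raise some degree to~$3$), and they realise the two edge counts $n$ and $n-1$. There is no serious obstacle in this argument; the only part requiring care is the case analysis showing that two components of sizes $\leq 2$ can always be joined by a new edge, and that a cycle component admits no incident edge, both of which are immediate from the degree bound.
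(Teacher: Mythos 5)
Your proof is correct and follows essentially the same route as the paper: both rest on the structural fact that every component of an edge-maximal $K_{1,3}$-free graph is a cycle, an edge or an isolated vertex, with at most one component on fewer than $3$ vertices, forcing $e(G)\in\{n-1,n\}$, and both use $C_n$ and $C_{n-1}\cup K_1$ to realise the two values; your explicit reduction to ``maximum degree at most $2$'' merely makes precise what the paper asserts directly. (The opening claim that $K_{1,3}$ is the only connected $4$-vertex graph with a strong separating vertex is left unproved in the paper and untouched by you, but it is immediate from the definition.)
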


For the rest of this section we consider the case when the connected graph $H$ on $h$ vertices has at least one leaf and has no strong separating vertex. We say that a connected graph $G$ is \emph{leaf-and-edge-maximal $H$-free} if $G$ is edge-maximal $H$-free and attaching a new leaf to an arbitrary vertex of $G$ creates an $H$-minor.

\begin{lemma}
\label{lem:IOmaximal}
Suppose that the connected graph $H$ has a leaf, and the class of $H$-free graphs is not linearly impure.  Then each leaf-and-edge-maximal $H$-free graph $G$ satisfies $e(G)/v(G) = (h-2)/2$; and each $H$-free graph $G$ satisfies $e(G)/v(G) \leq (h-2)/2$.
\end{lemma}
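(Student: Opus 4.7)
The plan is to use $K_{h-1}$ as a canonical leaf-and-edge-maximal $H$-free graph of density exactly $(h-2)/2$, combined with a disjoint-union construction, to pin down $\beta_H = (h-2)/2$ and thereby force every leaf-and-edge-maximal $H$-free graph to share this density.

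The main combinatorial tool is the following closure property: if $G_1$ is a connected leaf-and-edge-maximal $H$-free graph and $G_2$ is any edge-maximal $H$-free graph, then $G_1 \cup G_2$ is edge-maximal $H$-free. The only non-trivial case is a cross-edge $uv$ with $u \in V(G_1)$ and $v \in V(G_2)$: contracting the component of $G_2$ containing $v$ to a single vertex turns $G_1 \cup G_2 + uv$ into a minor equal to $G_1$ with a pendant leaf at $u$, which contains $H$ as a minor by leaf-and-edge-maximality of $G_1$. Noting that $K_{h-1}$ itself is leaf-and-edge-maximal $H$-free (it is complete, and attaching any pendant embeds $H$ by sending a leaf of $H$ to the pendant and the remaining vertices of $H$ bijectively into $K_{h-1}$), iterating the closure property shows that for every connected leaf-and-edge-maximal $H$-free $G$ and all $a, b \geq 0$, $r \in \{0, 1, \ldots, h-2\}$, the graph $aG \cup bK_{h-1} \cup K_r$ is edge-maximal $H$-free (cross-edges touching $K_r$ are handled by contracting $K_r$ to a single vertex, reducing to the previous case).

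I first prove the second assertion by showing $\beta_H = (h-2)/2$. Taking $a = 0$, every $n$ can be written as $n = b(h-1) + r$ with $r \in [0, h-2]$, producing an edge-maximal $H$-free graph on $n$ vertices with at most $(h-2)n/2 + O(1)$ edges, so $M^-_\cH(n) \leq (h-2)n/2 + O(1)$. Since $H$ is connected, $\Ex(H)$ is decomposable, and Lemma~\ref{lem.decomp} gives $M^+_\cH(n)/n \to \beta_H$; if $\beta_H > (h-2)/2$, then $\liminf_n \gap_H(n)/n \geq \beta_H - (h-2)/2 > 0$, contradicting the assumption that $\Ex(H)$ is not linearly impure. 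Combined with the trivial bound $\beta_H \geq (h-2)/2$ from $K_{h-1}$ itself, this forces $\beta_H = (h-2)/2$, so every $H$-free $G$ satisfies $e(G)/v(G) \leq (h-2)/2$.

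For the first assertion, let $G$ be leaf-and-edge-maximal $H$-free (in particular connected), so that $d(G) := e(G)/v(G) \leq (h-2)/2$ by what has just been proved. Suppose for contradiction that $d(G) = (h-2)/2 - \mu$ with $\mu > 0$. For each large $n$, set $a = \lfloor n/v(G) \rfloor$ and $s = n - a\,v(G) \in [0, v(G)-1]$, and write $s = b(h-1) + r$ with $r \in [0, h-2]$. Then $aG \cup bK_{h-1} \cup K_r$ is edge-maximal $H$-free with $a\,e(G) + O(1) = d(G)\,n + O(1)$ edges, giving $M^-_\cH(n) \leq d(G)\,n + O(1)$. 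Combined with $M^+_\cH(n) = (h-2)n/2 + o(n)$ (from $\beta_H = (h-2)/2$ via Lemma~\ref{lem.decomp}), this yields $\gap_H(n)/n \geq \mu - o(1)$, once more contradicting non-linear-impurity. Hence $d(G) = (h-2)/2$. The delicate step throughout is the closure property for disjoint unions (in particular verifying that every cross-edge creates an $H$-minor via the contraction trick); everything else is elementary counting.
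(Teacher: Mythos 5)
Your proof is correct and follows essentially the same route as the paper: both arguments rest on the facts that $K_{h-1}$ is leaf-and-edge-maximal $H$-free with density $(h-2)/2$, that disjoint unions of leaf-and-edge-maximal graphs padded by cliques $K_{h-1}$ and $K_r$ remain edge-maximal $H$-free (the contraction-to-a-pendant-leaf argument you spell out is exactly what the paper leaves implicit as ``trivial''), and that any density discrepancy between two such building blocks forces a linear gap, contradicting the assumption that $\Ex(H)$ is not linearly impure. The only cosmetic difference is that you route the second assertion through $\beta_H$ and Lemma~\ref{lem.decomp} rather than comparing disjoint copies of $G$ directly with disjoint copies of $K_{h-1}$, which changes nothing essential.
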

\begin{proof}
Indeed, if there existed two leaf-and-edge-maximal $H$-free graphs $G_1, G_2$ with $\frac{e(G_1)}{v(G_1)} >  \frac{e(G_2)}{v(G_2)}$ then we could trivially construct two arbitrarily large edge-maximal $H$-free graphs with the same number of vertices: $G'$ consisting of disjoint copies of $G_1$, and $G''$ consisting of disjoint copies of $G_2$, such that
\[
e(G')-e(G'') = \left ( \frac{e(G_1)}{v(G_1)} - \frac{e(G_2)}{v(G_2)} \right ) v(G').
\]
Further, to handle general $n$, to both $G'$ and $G''$ we could add a union of at most $\frac{\max\{v(G'),v(G'')\}}{h-1}$ disjoint copies of $K_{h-1}$ and a $K_{i}$ for some $1 \leq i \leq h-2$, keeping the graph edge-maximal $H$-free.

The claim now follows from the observation that, since $H$ has a leaf,  $K_{h-1}$ is always a leaf-and-edge-maximal $H$-free graph.  The second statement in the lemma follows similarly, by taking $G_1$ as $G$ and $G_2$ as $K_{h-1}$.
\end{proof}

\begin{observation}
\label{obs:atMostOneNotIO}
Suppose that $H$ has no strong separating vertex. Then any edge-maximal $H$-free graph contains at most one component that is not leaf-and-edge-maximal $H$-free. Otherwise we could connect two such components by a suitably attached edge, and the resulting graph would still be $H$-free because $H$ has no strong separating vertex and the components we started with were not leaf-and-edge-maximal $H$-free.
\end{observation}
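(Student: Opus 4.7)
I would proceed by contradiction: suppose $G$ is edge-maximal $H$-free and contains two components $C_1, C_2$ each failing to be leaf-and-edge-maximal $H$-free. By definition I can choose vertices $u_i \in V(C_i)$ ($i=1,2$) so that the graph $C_i^+$ obtained from $C_i$ by attaching a new leaf $\ell_i$ at $u_i$ is still $H$-free. The plan is to show that the graph $G' := G + \{u_1,u_2\}$ is also $H$-free, contradicting the edge-maximality of $G$.

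Assume for contradiction that $G'$ has an $H$-minor with branch sets $\{B_x : x \in V(H)\}$. Because $G$ is $H$-free, the new edge $e = \{u_1,u_2\}$ must participate in the minor; and since $e$ is the only edge of $G'$ between $V(C_1)$ and $V(C_2)$, every branch set not containing both endpoints of $e$ lies entirely inside one $V(C_s)$. I would split into two cases: Case~A where $u_1, u_2$ lie in a common branch set $B_a$, and Case~B where $u_1 \in B_a$ and $u_2 \in B_b$ with $a \ne b$, so that the edge $\{a,b\} \in E(H)$ is realized by $e$.

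In Case~A, write $B_a = B_a^1 \cup B_a^2$ with $B_a^s \subseteq V(C_s)$ connected and containing $u_s$, and partition the remaining branch sets into $Y_1 \sqcup Y_2$ by side. Since no $G$-edge joins $V(C_1)$ to $V(C_2)$, no edge of $H-a$ crosses between $Y_1$ and $Y_2$; hence each $Y_s$ is a union of components of $H-a$. Using that $a$ is not a strong separating vertex of $H$, either $H-a$ is connected, in which case one of $Y_1,Y_2$ is empty and then $\{B_a^{s'}\} \cup \{B_x : x \in Y_{s'}\}$ realizes $H$ entirely inside $C_{s'}$, contradicting $H$-freeness; or else $H-a$ consists of a big component of size $h-2$ together with a single-leaf component $\{p\}$ attached to $a$, forcing $Y_s = \{p\}$ for one side, and then replacing $B_p$ by the singleton $\{\ell_{s'}\}$ while shrinking $B_a$ to $B_a^{s'}$ yields an $H$-minor in $C_{s'}^+$, contradicting the choice of $u_{s'}$.

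Case~B is analogous. The edge $\{a,b\}$ must be a cut-edge of $H$ separating $Y_1 \ni a$ from $Y_2 \ni b$, since the only $H$-edge crossing $Y_1$ and $Y_2$ is $\{a,b\}$. If $|Y_2| = 1$ then $b$ is a leaf of $H$ adjacent only to $a$, and taking $\{\ell_1\}$ as the branch set for $b$ produces an $H$-minor of $C_1^+$, contradicting the choice of $u_1$; the case $|Y_1| = 1$ is symmetric. The main obstacle I expect is the remaining sub-case $|Y_1|, |Y_2| \geq 2$: here I would argue that, whenever one of these sides has size at least $3$, the opposite endpoint of $\{a,b\}$ would be a strong separating vertex of $H$ (its removal leaves a whole side of size at most $h-3$ and internal components of the other side of size at most $h-3$), contradicting the hypothesis. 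This disposes of every $H$ with $h \geq 5$; the small residual case $h=4$, where $H$ would have to be a path on four vertices, is to be handled by direct inspection of the possibilities. Once all cases are eliminated, $G'$ is $H$-free, contradicting the edge-maximality of $G$, and the observation follows.
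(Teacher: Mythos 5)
Your overall strategy is the same as the paper's (the paper only gives the one-sentence justification embedded in the statement), and your Case A analysis together with the $h\geq 5$ part of Case B is correct and fills in real detail: the branch sets split cleanly across the unique crossing edge, the no-strong-separating-vertex hypothesis forces $H-a$ to be connected or to consist of a big component plus a single leaf, and transplanting the stray leaf onto the new pendant vertex $\ell_{s}$ contradicts the choice of $u_{s}$. One wording slip there: in Case B with $|Y_1|,|Y_2|\geq 2$ and, say, $|Y_1|\geq 3$, the vertex to delete is the endpoint of $\{a,b\}$ lying in the \emph{larger} side (its removal leaves the whole opposite side, of size $h-|Y_1|\leq h-3$, plus pieces of its own side, each of size at most $|Y_1|-1\leq h-3$); your parenthetical computation is exactly this, but ``opposite endpoint'' points at the wrong vertex.

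The genuine gap is the deferred residual case $|Y_1|=|Y_2|=2$, i.e.\ $h=4$ and $H=P_4$, and it cannot be closed: the observation is in fact false for $H=P_4$. Take $G$ to be the disjoint union of two claws $K_{1,3}$. Adding an edge inside a claw creates a triangle with a pendant edge, and adding any edge between the two claws creates a path on four vertices, so $G$ is edge-maximal $P_4$-free; yet neither component is leaf-and-edge-maximal, since attaching a leaf to a centre yields $K_{1,4}$, which is $P_4$-free. Running your construction on this example (joining the two centres) produces precisely the crossing $P_4$ with two branch sets on each side that you set aside, so ``direct inspection'' here uncovers a counterexample rather than a proof. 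Note that the paper's own sketch fails at the same point, and that the observation should be read as excluding $P_4$; this does not affect the paper's main theorems, since $\Ex(P_4)$ is shown to be linearly impure separately (Observation \ref{obs:n=4twoLeaves}), but your write-up should state the exclusion explicitly and prove the observation only for the remaining $H$, which your argument already does.
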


The next lemma is the final step towards proving Theorem \ref{thm:threeClasses}.

\begin{lemma}
\label{lem:noGreyTerritory}
Let $H$ be a graph on $h$ vertices that is connected, has at least one leaf and has no strong separating vertex. If there exists $n > 0$ and two edge-maximal $H$-free graphs on $n$ vertices $G_1, G_2$ such that
\begin{equation}
\label{eq:boundOnM}
e(G_1) - e(G_2) \geq M = \frac{h-2}{2} + 2\beta_H^2+1
\end{equation}
then the class of $H$-free graphs is linearly impure.
\end{lemma}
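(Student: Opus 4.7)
The plan is a proof by contradiction: assume $\Ex(H)$ is not linearly impure. Applying Lemma~\ref{lem:IOmaximal} together with the example $K_{h-1}$ gives $\beta_H = (h-2)/2$, and forces every leaf-and-edge-maximal $H$-free graph to have density exactly $(h-2)/2$. Combined with Observation~\ref{obs:atMostOneNotIO}, each of $G_1, G_2$ has at most one component that is not leaf-and-edge-maximal; call this the ``bad'' component $D_i$, with ``deficit'' $\phi_i := \tfrac{h-2}{2} v(D_i) - e(D_i) \geq 0$ (setting $\phi_i = 0$ if there is no bad component). Summing edges across the components of $G_i$ gives $e(G_i) = \tfrac{h-2}{2}n - \phi_i$, so the hypothesis $e(G_1)-e(G_2) \geq M$ rewrites as $\phi_2 - \phi_1 \geq M$; in particular $\phi_2 \geq M$.

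The main amplification step is as follows: for each integer $k \geq 1$, take $k$ disjoint copies of $G_2$ (still $H$-free because $H$ is connected) and extend to an edge-maximal $H$-free graph $\tilde G_k$ on $kn$ vertices. By Observation~\ref{obs:atMostOneNotIO} applied to $\tilde G_k$, the $k$ copies $D_2^{(1)},\ldots,D_2^{(k)}$ must merge into a single connected bad component $\tilde D_k$ through some number $E$ of added ``bridges,'' and the central claim is $E \leq 2k\beta_H^2$. For each copy $i$, let $C_i \subseteq V(D_2^{(i)})$ denote the set of bridge-endpoints in $D_2^{(i)}$. Following the clique-at-interface argument from the subadditivity lemma after Lemma~\ref{lem.decomp} (contract everything of $\tilde D_k$ outside $D_2^{(i)}$ to a single vertex, then apply edge-maximality of $D_2^{(i)}$ and the fact that $K_h$ contains $H$), $C_i$ is a clique in $D_2^{(i)}$ of size at most $h-2$. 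The $\beta_H$-bound applied to the subgraph $S$ of $\tilde D_k$ induced on $\bigsqcup_i V(C_i)$ then gives
\[
 E \;\leq\; \beta_H\sum_i|C_i| \;-\; \sum_i\binom{|C_i|}{2} \;=\; \sum_i \frac{|C_i|(h-1-|C_i|)}{2} \;\leq\; k\cdot\frac{(h-1)^2}{8} \;\leq\; 2k\beta_H^2,
\]
the final inequality being valid for all $h \geq 3$.

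Combining with the superadditivity of $M^+_\cH$ from Lemma~\ref{lem.decomp}, we have $M^+_\cH(kn) \geq kM^+_\cH(n) \geq k\,e(G_1)$, whereas $M^-_\cH(kn) \leq e(\tilde G_k) \leq k\,e(G_2) + 2k\beta_H^2$. Subtracting,
\[
 \gap_\cH(kn) \;\geq\; k\bigl(e(G_1) - e(G_2)\bigr) - 2k\beta_H^2 \;\geq\; kM - 2k\beta_H^2 \;=\; k\Bigl(\tfrac{h-2}{2}+1\Bigr).
\]
The specific form of $M$ is chosen precisely so that the summand $2\beta_H^2$ absorbs the bridge contributions and the remaining $\tfrac{h-2}{2}+1$ leaves a strictly positive surplus. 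Padding by disjoint copies of $K_{h-1}$ to interpolate between multiples of $n$, we conclude $\limp(H) \geq \tfrac{1}{n}\bigl(\tfrac{h-2}{2}+1\bigr) > 0$, contradicting the assumption that $\Ex(H)$ is not linearly impure.

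The main technical obstacle is the clique-at-interface step, specifically the claim that $C_i$ forms a clique in $D_2^{(i)}$. The ``contract outside $D_2^{(i)}$ to a single vertex'' move implicitly assumes $\tilde D_k \setminus V(D_2^{(i)})$ is connected, which need not hold when the bridge-structure among the copies is tree-like. I would address this by running the clique argument on each connected component $T$ of $\tilde D_k \setminus V(D_2^{(i)})$ separately: the endpoints $C_i^{(T)}$ reaching into $T$ form a clique of size $\leq h-2$ in $D_2^{(i)}$, and the final inequality for $E$ follows by applying the $\beta_H$-bound to the induced subgraph on $\bigsqcup_i\bigcup_T V(C_i^{(T)})$, which is still a subgraph of $\tilde D_k$ and hence $H$-free.
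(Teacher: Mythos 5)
Your overall skeleton (contradiction via Lemma~\ref{lem:IOmaximal}, so $\beta_H=(h-2)/2$ and $G_2$ has a single deficient component, then amplify the sparse example and compare with $k\,e(G_1)$) is sound, and it genuinely differs from the paper: the paper does not complete $k$ disjoint copies of $G_2$ to an arbitrary edge-maximal graph, but instead glues $m$ copies of the bad component $C$ at a \emph{single} vertex $v\in A$ chosen with $\deg_A(v)\leq 2\beta_H$, which is exactly what makes the count of addable edges come out as $2\beta_H^2m+O(1)$. Your route, however, has a genuine gap at its quantitative core: the claim $E\leq 2k\beta_H^2$ for the number of completion edges is not established. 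As you note, the clique-at-interface step needs the outside of a copy to be connected; but your proposed repair does not finish the count. With per-(copy, outside-component) cliques $C_i^{(T)}$ you only get $|C_i|\leq t_i(h-2)$, where $t_i$ is the number of components of $\tilde D_k - V(D_2^{(i)})$ joined to copy $i$, and ``applying the $\beta_H$-bound to the induced subgraph'' then yields only $E\leq \beta_H\sum_i t_i (h-2)$. This is useless without an a priori bound on $\sum_i t_i$ (which you never supply; it requires an extra structural argument, e.g.\ a block--cut-tree count on the graph whose vertices are the copies, giving $\sum_i t_i\leq 2(k-1)$), and even granted such a bound it gives about $k(h-2)^2$, which exceeds the budget $M=\tfrac{h-2}{2}+2\beta_H^2+1$ allows. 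To recover a usable constant you must also redo the subtraction of within-copy clique edges per incidence, and your displayed inequality $E\leq\beta_H\sum_i|C_i|-\sum_i\binom{|C_i|}{2}$ is no longer valid once $C_i$ is a union of possibly overlapping cliques rather than a single clique of size at most $h-2$. So the central estimate is missing, not merely unpolished.

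Two smaller points. First, the assertion that the $k$ copies of the bad component ``must merge into a single connected bad component'' does not follow from Observation~\ref{obs:atMostOneNotIO}: a component of $\tilde G_k$ containing one copy of $D_2$ together with added bridges could itself become leaf-and-edge-maximal, so several such components are not excluded; fortunately your counting does not actually need the merging, but it should not be asserted as a consequence of that observation. Second, padding by disjoint copies of $K_{h-1}$ only reaches values of $n$ congruent to $kn$ modulo $h-1$; to get $\liminf \gap_H(n)/n>0$ you must also handle a remainder clique $K_s$ (and argue the completion adds only $O(1)$ further edges), as the paper does explicitly in its construction with the parameters $t$ and $s$.
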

\begin{proof}
Assume for a contradiction that the class of $H$-free graphs is not linearly impure. Let $G_1$ and $G_2$ be edge-maximal $H$-free graphs on the same vertex set such that $e(G_1)-e(G_2) \geq M$, for $M$ as in \eqref{eq:boundOnM} (we observe that $\beta_H \leq \beta_{K_h})$. If all components of $G_2$ were leaf-and-edge-maximal $H$-free then by Lemma \ref{lem:IOmaximal} we would have some component $C$ of $G_1$ that was $H$-free and satisfied $e(C)/v(C) > (h-2)/2$, which contradicts Lemma~\ref{lem:IOmaximal}.

Hence, by Observation \ref{obs:atMostOneNotIO}, $G_2$ has exactly one component $C$, with $|C|=c$, that is edge-maximal $H$-free, but not leaf-and-edge-maximal $H$-free. By Lemma~\ref{lem:IOmaximal} we have
\[
 e(C) \leq \frac{h-2}{2}c - M.
\]
Let $A$ be the set of all vertices $v$ in $C$ such that attaching a leaf to $v$ does not create an $H$ minor, and let $a = |A| \geq 1$. Clearly the graph induced by $A$ must be $H$-free so the set $A$ induces at most $\beta_H a$ edges. Let $v$ be a vertex in $A$ with the minimum number of neighbours in $A$: clearly, $\deg_A(v) \leq 2 \beta_H$.

Let $n = (c-1)m+1+t(h-1)+s$, where $t \geq 0$, $0 \leq s \leq h-2$, and $t(h-1)+s < c-1$. We take $m$ isomorphic copies of $C$ and turn them into one connected graph on $n' = (c-1)m+1$ vertices and $me(C)$ edges by identifying the vertices $v$ in all these copies into one vertex (still called $v$). Next, we add a copy of $K_{s}$ and join all (if $s \leq h-3$) or one (if $s = h-2$) of its vertices to $v$ by an edge. Finally, to this graph we add $t$ disjoint copies of $K_{h-1}$. The resulting graph $G$ on $n$ vertices is $H$-free by the definition of $A$ and by the fact that $H$ has no strong separating vertex (this latter property is the reason why we can join all the vertices of $K_s$ with $v$ if $s \leq h-3$).

We do not know if this graph is edge-maximal $H$-free. However, observe that we can only add edges to $G$ between distinct copies of the set $A$, or between one of the copies of $A$ and the clique $K_{s}$, or between $v$ and the clique $K_s$ (if $s=h-2$). Moreover, we are not allowed to add edges incident to vertices in $A$ that are not adjacent to $v$. Indeed, assume that we add an edge $\{u,w\}$ such that $u \notin N(v)$. Then by contracting a path from $w$ to $v$ (recall that $C$ is connected) we ``add'' the edge $\{u,v\}$ to a copy of $C$ which creates an $H$ minor by the edge-maximality of $C$ (see Figure \ref{figure:noGreyTerritory}). Hence there are at most $2 \beta_H m + s$ vertices other than $v$ between which we can add edges and keep the graph $H$-free.

\begin{figure}[htb] \centering
  \begin{tikzpicture}
    \tikzstyle{vertex}=[draw,shape=circle,minimum size=3pt,fill, inner sep=0pt]

    \draw[thick] (2.5,0) ellipse (2cm and 0.5cm);
    \draw[thick] (-0.5,0) ellipse (2cm and 0.5cm);
    \draw[thick] (1,1.5) ellipse (0.5cm and 2cm);
    \draw[thick] (1,-1.5) ellipse (0.5cm and 2cm);
    \draw[black] (-1.75,0) node {$C$};
    \draw[black] (1,2.75) node {$C$};
    \draw[black] (1,-2.75) node {$C$};
    \draw[black] (3.75,0) node {$C$};
    \draw (1.85,0) ellipse (1.35cm and 0.35cm);
    \draw (0.15,0) ellipse (1.35cm and 0.35cm);
    \draw (1,0.85) ellipse (0.35cm and 1.35cm);
    \draw (1,-0.85) ellipse (0.35cm and 1.35cm);
    \draw[black] (-0.75,0) node {$A$};
    \draw[black] (1,1.75) node {$A$};
    \draw[black] (1,-1.75) node {$A$};
    \draw[black] (2.75,0) node {$A$};
    
    \foreach \name/\x/\y in {v/0.9/0, u/0.9/1, w/0.1/0}
      \node[vertex] (P-\name) at (\x,\y) {~};

    \foreach \name/\x/\y in {$v$/1.1/0, $u$/1.1/1, $w$/-0.15/0}
      \draw[black] (\x,\y) node {\name};
      
    \draw [color=black] (P-u) .. controls +(-0.5,0) and +(0,0.5) .. (P-w);
    \draw [color=black, dashed] (P-w) .. controls +(0.5,0.25) and +(-0.5,-0.25) .. (P-v);
    
  \end{tikzpicture}
  \caption{An $H$-free graph $G$ with $t=s=0$ as defined in Lemma \ref{lem:noGreyTerritory}.}
  \label{figure:noGreyTerritory}
\end{figure}
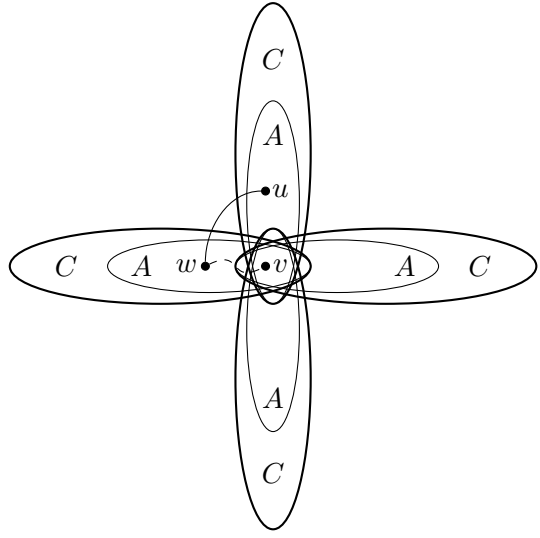

Again, to avoid creating an $H$-minor we can add at most $2 \beta_H^2 m + \beta_H s$ edges between these vertices. Since we can add at most $h-3$ edges incident to $v$, there exists an edge-maximal $H$-free graph $G'$ on $n = (c-1)m+1+t(h-1)+s$ vertices with
\[
e(G') \leq e(C)m + 2 \beta_H^2 m + \beta_H s + h-3 < m \left( \frac{h-2}{2}c - M + 2 \beta_H^2 \right ) + h(\beta_H+1).
\]
We take $G''$ to be an edge-maximal $H$-free graph on $n$ vertices consisting of $\lfloor n/(h-1) \rfloor$ disjoint copies of $K_{h-1}$ and one copy of $K_i$ for some $0 \leq i \leq h-2$. Hence we have
\[
e(G'') > \frac{n(h-2)}{2} - \frac{(h-2)^2}{2}.
\]
Therefore
\[
\begin{split}
e(G'') - e(G') & \geq ((c-1)m+1)\frac{h-2}{2} - \frac{(h-2)^2}{2} - m \left ( \frac{h-2}{2}c - M + 2 \beta_H^2 \right ) - h(\beta_H+1) \\
 & \geq  m \left ( M- \frac{h-2}{2} - 2 \beta_H^2 \right ) - \frac{(h-3)(h-2)}{2} - h(\beta_H+1) \\
 & \geq m - \frac{(h-3)(h-2)}{2} - h(\beta_H+1),
 \end{split}
\]
where the last inequality follows from \eqref{eq:boundOnM}. Consequently,
\[
\frac{e(G'') - e(G')}{n} \geq \frac{m - \frac{(h-3)(h-2)}{2} - h(\beta_H+1)}{(c-1)m+1+t(h-1)+s} \to \frac{1}{c-1}
\]
as $n \to \infty$. This completes the proof of Lemma~\ref{lem:noGreyTerritory}, and thus of Theorem \ref{thm:threeClasses}.
\end{proof}


\section{Purity with one forbidden connected minor}
\label{sec:allNonPure}

In this section we complete the proof of Theorem~\ref{thm:connpure}, showing that $K_2, K_3, K_4$ and $P_3$ are indeed the only connected graphs yielding pure $H$-free classes of graphs.

\begin{lemma}
 \label{thm:allNonPure}
 Let $H \notin \{K_2, K_3, K_4, P_3\}$ be a connected graph. Then $\Ex(H)$ is not a pure class of graphs.
\end{lemma}

By Lemma~\ref{lem.noleaf}, $\Ex(H)$ is not pure if the minimum degree $\delta(H) \geq 2$. By Lemma \ref{lem:separatingVertex} and Observation \ref{obs:claw}, $\Ex(H)$ is not pure if there is a strong separating vertex. Note that, in particular, Lemma \ref{lem:separatingVertex} and Observation \ref{obs:claw} cover all graphs $H$ such that some vertex of $H$ has at least two leaves attached to it. Hence in this section we focus on graphs $H$ with at least one leaf and with no strong separating vertex.

\begin{remark}
 In what follows, for various classes of excluded minors $H$ with $v(H) = h$ we prove that there exists some $n \in \NN$ such that $\gap_H(n) > 0$. Since we consider graphs $H$ with $\delta(H)=1$, this immediately implies that for all $k \geq 0$ we have $\gap_H(n+k(h-1)) > 0$. Indeed, a disjoint union of an edge-maximal $H$-free graph $G$ and $k$ copies of $K_{h-1}$ is again an edge-maximal $H$-free graph.
\end{remark}

\begin{lemma}
 \label{lem:twoDisjointLeaves}
 Let $H$ be a graph on $h > 5$ vertices with at least two leaves, and with no strong separating vertex. Then the class of $H$-free graphs is not pure.
\end{lemma}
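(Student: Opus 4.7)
The plan is to exhibit, on $n=h$ vertices, two edge-maximal $H$-free graphs whose edge counts differ, which will give $\gap_H(h)>0$ and hence that $\Ex(H)$ is not pure. First I would use the no-strong-separating-vertex hypothesis to pin down how the leaves of $H$ sit: if some vertex $u$ of $H$ had two leaves attached, then $H-u$ would split into two isolated leaves plus a remainder of size $\leq h-3$, every component thus of size $\leq h-3$; since $h\geq 6$, this would make $u$ strong separating, a contradiction. Hence the (at least) two leaves of $H$ lie at distinct vertices, and I fix a pair $\ell_1,\ell_2$ attached at $u_1\neq u_2$.

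For the dense graph take $G_1=K_{h-1}\cup K_1$. The only non-edges join the isolated vertex to the clique, and adding any one yields a $K_{h-1}$ with a pendant, into which $H$ embeds by mapping $\ell_1$ to the pendant, $u_1$ to its neighbour, and the remaining $h-2$ vertices of $H$ arbitrarily into the clique (which supplies every remaining edge of $H$). So $G_1$ is edge-maximal $H$-free with $e(G_1)=\binom{h-1}{2}$. For the sparse graph, let $G_2$ be the clique-sum of $K_{h-2}$ and $K_3$ at a single vertex: pick $a\in V(K_{h-2})$, add two new vertices $p_1,p_2$, and insert the three edges $ap_1,ap_2,p_1p_2$; then $e(G_2)=\binom{h-2}{2}+3$. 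For $H$-freeness, observe that $G_2-a$ has components $\{p_1,p_2\}$ and $V(K_{h-2})\setminus\{a\}$, of sizes $2$ and $h-3$, both $\leq h-3$; since $v(H)=v(G_2)=h$, any copy of $H$ in $G_2$ would be a bijection on vertex sets, forcing the preimage of $a$ to be a strong separating vertex of $H$ --- contradicting the hypothesis.

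For edge-maximality of $G_2$, by the symmetry between $p_1,p_2$ and among the vertices of $V(K_{h-2})\setminus\{a\}$ it suffices to treat the addition of one edge $p_1w$ with $w\in V(K_{h-2})\setminus\{a\}$. I would then define the bijection $\phi:V(H)\to V(G_2+p_1w)$ by $\phi(\ell_1)=p_2$, $\phi(u_1)=a$, $\phi(\ell_2)=p_1$, $\phi(u_2)=w$, extending arbitrarily on the remaining $h-4$ vertices of $H$ bijectively onto $V(K_{h-2})\setminus\{a,w\}$. The two pendant edges $\ell_1u_1,\ell_2u_2$ then map to $p_2a$ and $p_1w$, both edges of $G_2+p_1w$, while every edge of $H$ among the non-leaf images lies inside the complete graph $K_{h-2}$. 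The only potential obstruction --- that some edge of $H$ demands a missing connection between $\{p_1,p_2\}$ and $V(K_{h-2})\setminus\{a,w\}$ --- is ruled out because the only $H$-edges incident to $\ell_1$ or $\ell_2$ are $\ell_1u_1$ and $\ell_2u_2$, already handled. Hence $H$ embeds into $G_2+p_1w$, establishing edge-maximality. Finally,
$e(G_1)-e(G_2)=\binom{h-1}{2}-\binom{h-2}{2}-3=(h-2)-3=h-5\geq 1$
since $h>5$, giving $\gap_H(h)\geq 1$. The step I expect to require most care is checking that the embedding $\phi$ really works for every admissible $H$; this reduces to the elementary facts that $a$ is adjacent in $G_2+p_1w$ to every other vertex and that the distinctness $u_1\neq u_2$ is exactly what prevents any unwanted edge requirement at $p_1$ or $p_2$.
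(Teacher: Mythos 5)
Your proof is correct and follows essentially the same route as the paper: the same two $h$-vertex extremal examples $K_{h-1}\cup K_1$ and the one-vertex clique-sum of $K_{h-2}$ with $K_3$, with the same edge counts and the same use of the no-strong-separating-vertex hypothesis for $H$-freeness and of the two leaves (necessarily at distinct vertices) for edge-maximality. Your write-up merely spells out the embeddings that the paper leaves as one-line remarks.
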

\begin{proof}
 Let $G_1$ be the union of $K_{h-1}$ and an isolated vertex. Clearly $G_1$ is $H$-free and is maximal since $H$ has leaves. Also, $e(G_1) = \binom{h-1}{2}$.
 
Let $G_2$ be formed from a $K_{h-2}$ and a $K_3$ that have one vertex in common. To see that $G_2$ is $H$-free notice that the removal of the common vertex would leave no component of size at least $h-2$. Also, $G_2$ is edge-maximal $H$-free since adding an extra edge would allow us to place two leaves of $H$ in the initial $K_3$. Obviously, $e(G_2) = \binom{h-2}{2}+3$ and $e(G_1) > e(G_2)$ for all $h > 5$.
\end{proof}

\begin{observation}
\label{obs:n=4twoLeaves}
The only connected graph $H$ on $4$ vertices with at least two leaves and with no strong separating vertex is $P_4$, the path on $4$ vertices. However, let us show that $\limp(P_4) = \frac12$. Indeed, every edge-maximal $P_4$-free graph has at most one isolated vertex, thus we have $M^-_{P_4}(n) \geq \frac{n-1}{2}$. Also, a perfect matching for $n$ even, or a triangle plus a perfect matching on the remaining $n-3$ vertices for $n$ odd, is edge-maximal $P_4$-free, so $M^-_{P_4}(n) \leq \frac{n+3}{2}$.

On the other hand, any component of a $P_4$-free graph must be acyclic or unicyclic, as otherwise it would contain a $C_4$ or a \emph{bowtie graph} (two triangles with one common vertex) as a minor, thus it would not be $P_4$-free. Hence $M^+_{P_4}(n) \leq n$. Since a star on $n$ vertices guarantees $M^+_{P_4}(n) \geq n-1$, we have $\limp(P_4) = 1-\frac12 = \frac12$.
\end{observation}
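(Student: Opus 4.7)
First, I would dispatch the enumeration claim by listing the connected graphs on $h = 4$ vertices up to isomorphism — $K_4$, $K_4 - e$, $C_4$, the paw, $P_4$, and $K_{1,3}$ — and keep only those with at least two leaves, leaving $P_4$ and $K_{1,3}$. A strong separating vertex $v$ requires each component of $H - v$ to have at most $h - 3 = 1$ vertex. The centre of $K_{1,3}$ meets this, but each cut-vertex of $P_4$ leaves a component of size two, so $P_4$ has no strong separating vertex. This identifies $P_4$ as the unique graph in question.

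For the limit statement, the plan is to pin down $M^+_{P_4}(n)$ and $M^-_{P_4}(n)$ up to a constant. Since $P_4$ has maximum degree $2$, contractions cannot shorten paths, and a graph contains $P_4$ as a minor exactly when it contains a path on four vertices as a subgraph. From this I would argue that any component of a $P_4$-free graph is acyclic or unicyclic (two independent cycles yield either $C_4$ or the bowtie, each containing $P_4$), and a finer case check then shows that each component is one of $K_1$, $K_2$, a star $K_{1,k}$ with $k \geq 2$, or $K_3$. In particular $e(G) \leq v(G)$, and the star $K_{1, n-1}$ is edge-maximal $P_4$-free for $n \geq 4$, giving $n - 1 \leq M^+_{P_4}(n) \leq n$.

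For $M^-_{P_4}(n)$ I would exhibit a perfect matching (for even $n$) or a triangle together with a perfect matching on the remaining $n - 3$ vertices (for odd $n$) as an edge-maximal $P_4$-free graph. Edge-maximality is checked by observing that joining two matched pairs $\{a_1, b_1\}, \{a_2, b_2\}$ by $a_1 a_2$ creates the path $b_1 a_1 a_2 b_2$, and that all other potential edges (between the triangle and the matching, or within the triangle) likewise produce a $P_4$. This yields $M^-_{P_4}(n) \leq (n+3)/2$. Conversely, in any edge-maximal $P_4$-free graph there can be at most one isolated vertex, since an isolated pair could always be joined without creating a $P_4$ minor; hence $M^-_{P_4}(n) \geq (n-1)/2$.

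Combining the four bounds,
\[
\frac{n-5}{2} \;\leq\; \gap_{P_4}(n) \;\leq\; \frac{n+1}{2}
\]
for all sufficiently large $n$, so $\gap_{P_4}(n)/n \to 1/2$ and $\limp(P_4) = 1/2$. The only mildly delicate step is the classification of the connected $P_4$-free graphs into $K_1$, $K_2$, stars, and $K_3$; everything else is short case analysis on which edges can be added without forming a path of length three.
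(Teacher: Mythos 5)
Your proposal is correct and follows essentially the same route as the paper: the same matching and triangle-plus-matching constructions for $M^-_{P_4}(n)$, the same isolated-vertex argument for the lower bound, and the same conclusion $M^+_{P_4}(n)\in\{n-1,n\}$. The only (harmless) difference is in justifying $M^+_{P_4}(n)\leq n$: you classify the connected $P_4$-free graphs exactly (as $K_1$, $K_2$, stars and $K_3$, using that a $P_4$-minor is equivalent to a $P_4$-subgraph), whereas the paper only observes that each component is acyclic or unicyclic via the $C_4$ and bowtie minors; both yield the same bound.
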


\begin{observation}
\label{obs:n=5twoLeaves}
The only connected graph $H$ on $5$ vertices with at least two leaves and with no strong separating vertex consists of a triangle on $\{1,2,3\}$ and two additional edges $\{1,4\}, \{2,5\}$ (it is the so called \emph{bull graph}). Let us show that $\limp(H) = \frac12$. Since every edge-maximal $H$-free graph has at most one acyclic component, we have $M^-_{H}(n) \geq n-1$. On the other hand, for all $n \geq 5$ the cycle $C_n$ is edge-maximal $H$-free so $M^-_{H}(n) \leq n$ for all $n \geq 5$.

Let us show that $M^+_{H}(n) \leq \frac{3n}{2}$. Let $C$ be a component of size at least $5$ of an edge-maximal $H$-free graph (components of size at most $4$ trivially have the edge-to-vertex ratio at most $3/2$). Observe that for any cycle in $C$, at most one vertex of the cycle has degree higher than $2$. Otherwise we immediately can find a bull graph in $C$, or $C$ contains the \emph{diamond graph} ($K_4$ less an edge) as a subgraph (hence also the bull, since $|C| \geq 5$). Thus $C$ is obtained from a tree by adding disjoint cycles, and then identifying one vertex of the cycle with one vertex of the tree. Hence, to maximise the ratio $e(C)/v(C)$ we should take all cycles to be triangles, and the tree to be just one vertex. This gives $e(C)/v(C) \leq 3(n-1)/2n$. Therefore $M^+_{H}(n) \leq \frac{3n}{2}$.

On the other hand, a disjoint union of $\lfloor n/4 \rfloor$ copies of $K_4$ is $H$-free, so we have $M^+_{H}(n) \geq \frac{3(n-3)}{2}$. This gives $\limp(H) = \frac32-1 = \frac12$.
\end{observation}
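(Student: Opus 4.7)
The first task is to verify the uniqueness claim: the only connected $5$-vertex graph $H$ with at least two leaves and no strong separating vertex is the bull. A strong separating vertex in a $5$-vertex graph is a cut vertex all of whose removal-components have at most $2$ vertices. A quick case analysis of connected $5$-vertex graphs with $\geq 2$ leaves (the star $K_{1,4}$, the path $P_5$, the ``Y'' graph, the paw-plus-leaf, and the bull) shows that every option except the bull either has two leaves incident with a common vertex (making that vertex strong separating) or has a strong separating vertex of its own (e.g.\ the central vertex of $P_5$). That leaves the bull as the sole candidate.

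For the lower estimate $M^-_H(n) \geq n-1$ I would argue that an edge-maximal bull-free graph has at most one acyclic (tree) component: given two tree components $T_1, T_2$, pick any edge in each and try to add an edge joining them. Since any bull contains a triangle, the resulting graph is still bull-free (no triangle could appear), contradicting edge-maximality. Hence if $G$ is edge-maximal bull-free on $n$ vertices with $t$ tree components and $c-t$ unicyclic-or-denser components, then $e(G) \geq (n-t) + (c-t) \geq n-1$ (using $t \leq 1$). The matching bound $M^-_H(n) \leq n$ is witnessed by $C_n$ itself, which is easily seen to be bull-free and edge-maximal for $n \geq 5$ (any chord would create a triangle with two external pendants).

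The main structural step is the upper bound $M^+_H(n) \leq \tfrac{3n}{2} + O(1)$. Let $C$ be a component of an edge-maximal bull-free graph on $\geq 5$ vertices. I would prove the following structural claim: every block of $C$ is either a single edge or a triangle. First, no block can be $K_4$-minus-an-edge (the diamond) or denser, since a diamond together with any vertex at distance $\geq 2$ from it in a connected graph on $\geq 5$ vertices yields a bull minor. Second, no block can be a cycle $C_k$ with $k \geq 4$, since $C_k$ with any extra vertex hanging off by a path of length $\geq 1$ already contains a bull minor after suitable contractions. Hence every block is $K_2$ or $K_3$, so $C$ is a ``cactus'' of triangles glued in a tree structure; such cacti satisfy $e(C)/v(C) \leq 3/2$, with equality only in the limit of all-triangle cacti. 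This gives $M^+_H(n) \leq \tfrac{3n}{2}$. The matching lower bound $M^+_H(n) \geq \tfrac{3(n-3)}{2}$ comes from the disjoint union of $\lfloor n/4 \rfloor$ copies of $K_4$ (each $K_4$ is bull-free because the bull has $5$ vertices, and such a disjoint union is edge-maximal since inserting any cross-edge creates a diamond attached to an external vertex, producing a bull minor).

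Putting everything together, $M^+_H(n)/n \to 3/2$ and $M^-_H(n)/n \to 1$, so $\limp(H) = 3/2 - 1 = 1/2$. The main obstacle is the block-structure claim for $M^+$: ruling out diamond blocks and cycles of length $\geq 4$ requires checking that the small amount of additional structure forced by edge-maximality plus $|V(C)| \geq 5$ always exposes a bull minor, which is a short but slightly delicate case analysis on how an external vertex attaches to the block in question.
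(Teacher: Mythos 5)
Your uniqueness argument, both bounds on $M^-_H(n)$, and the lower bound $M^+_H(n)\geq \tfrac{3(n-3)}{2}$ are all fine and essentially match the paper. The problem is the structural claim driving your upper bound on $M^+_H(n)$: it is false that every block of a component $C$ is an edge or a triangle. The cycle $C_n$ itself is an edge-maximal bull-free graph (adding any chord creates a triangle with two pendant paths, hence a bull minor), and its unique block is a cycle of length $n\geq 5$; likewise $C_5$ with one pendant edge is edge-maximal bull-free and has a $C_5$ block. Your justification for excluding long-cycle blocks --- that ``$C_k$ with any extra vertex hanging off by a path of length $\geq 1$ already contains a bull minor'' --- fails: $C_4$ plus a pendant vertex has $5$ vertices and $5$ edges, so a bull minor would force it to be isomorphic to the bull, which it is not (its only cycle has length $4$); a similar direct check kills $C_5$ plus a pendant. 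The underlying reason is that a cycle attached to the rest of the graph at a \emph{single} vertex can only ever supply one pendant to a contracted triangle, so no bull arises. Your case analysis of blocks is also incomplete in another way: $K_{2,3}$ is a $2$-connected edge-maximal bull-free graph (it is triangle-free, so a $5$-vertex minor cannot be the bull, and adding any edge creates one), and it is neither an edge, a triangle, a longer cycle, nor a diamond.

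The paper takes a different and weaker structural route: it argues that in a component $C$ with $|C|\geq 5$, each cycle has at most one vertex of degree greater than $2$, so $C$ is a tree with vertex-disjoint cycles each glued to it at a single vertex; long cycles are then permitted but are sparser than triangles, and the density bound $e(C)\leq \tfrac{3}{2}(v(C)-1)$ follows by optimising over this family. Your final numerical bound $M^+_H(n)\leq \tfrac{3n}{2}$ is still true (all the exceptional components above satisfy $e/v\leq 3/2$ comfortably), but as written your proof of it does not go through: you would need to either adopt the paper's weaker ``one attachment vertex per cycle'' description, or correctly enumerate the possible $2$-connected blocks (edges, triangles, longer cycles, and the sporadic $K_{2,3}$-type components) and bound their densities separately.
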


We claim that the only graphs that remain to be checked are graphs $H$ with exactly one leaf $v$ and such that the graph $H' = H - v$ is $2$-connected. Indeed, if $\delta (H') = 1$ then either $H$ has two leaves or the unique vertex of degree $1$ in $H'$ is the neighbour $u$ of $v$ in $H$. In the latter case, let the unique neighbour of $u$ in $H'$ be $w$. Then either all the components of $H - w$ have size at most $v(H)-3$ (so $w$ is a strong separating vertex), or $H$ is a $P_4$. Since neither of these is possible, we have $\delta (H') \geq 2$. But then, if $H'$ has connectivity $1$ then clearly $H$ has a strong separating vertex. This establishes our claim.

Unfortunately, it will require several more steps to deal with the case in the claim.

\begin{lemma}
 \label{lem:cliquePlusALeaf}
 Let $H$ be a graph on $h \geq 5$ vertices consisting of a clique on $h-1$ vertices and one pendant edge. Then the class of $H$-free graphs satisfies $\limp(H) \geq \frac{h-4}{2}$.
\end{lemma}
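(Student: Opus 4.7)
The plan is to exhibit, for all sufficiently large $n$, edge-maximal $H$-free graphs $G_1, G_2$ on $n$ vertices with $e(G_1)-e(G_2) = \tfrac{h-4}{2}n - O(1)$, exploiting the identity $(h-3) - (h-2)/2 = (h-4)/2$. So I want $G_1$ of density approaching $h-3$ and $G_2$ of density $(h-2)/2$.

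\textbf{Dense graph.} Let $G_1$ be a ``star'' $(h-3)$-tree: take an initial $K_{h-2}$ on $\{v_1,\ldots,v_{h-2}\}$ and attach each of the remaining $n-(h-2)$ vertices $w_\ell$ to some $(h-3)$-subset of the base clique. Being an $(h-3)$-tree, $G_1$ has treewidth $h-3$, hence contains no $K_{h-1}$-minor and so no $H$-minor, and satisfies $e(G_1)=(h-3)n-\binom{h-2}{2}$. For edge-maximality I would split the non-edges into three cases: $w_iv_k$ (with $v_k$ outside $w_i$'s bag), $w_iw_j$ with $w_i,w_j$ in the same bag, and $w_iw_j$ in different bags (missing vertices $v_{k_i}\neq v_{k_j}$). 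The first two cases produce explicit $K_{h-1}$-subgraphs with an obvious pendant ($w_j$ or $v_k$ respectively). The third case yields a $K_{h-1}$-minor on $\{w_i,w_j,v_1,\ldots,v_{h-2}\}$ using the branch sets $\{w_i\}, \{w_j\}, \{v_{k_i},v_{k_j}\}$ together with singletons $\{v_\ell\}$ for the remaining base vertices, with a third leaf $w_\ell$ (available whenever $n\ge h+1$) serving as the external pendant.

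\textbf{Sparse graph.} Let $G_2$ be the disjoint union of $\lfloor n/(h-1)\rfloor$ copies of $K_{h-1}$ plus a single $K_r$ with $0\le r<h-1$ to pad to $n$ vertices. Each component has fewer than $h$ vertices, and components are disjoint, so $G_2$ contains no $H$; meanwhile any added edge between components creates $K_{h-1}$ with an external neighbour, which is $H$ as a subgraph. Hence $G_2$ is edge-maximal $H$-free with $e(G_2)=\tfrac{h-2}{2}n+O(1)$. Combining, $e(G_1)-e(G_2)=\tfrac{h-4}{2}n-O(1)$, giving $\limp(H)\ge(h-4)/2$.

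The principal obstacle lies in the third non-edge case of the edge-maximality of $G_1$: the $K_{h-1}$-minor exhibited there uses all $h$ vertices of $\{w_i,w_j,v_1,\ldots,v_{h-2}\}$, so no pendant can be found within this set and one must genuinely invoke a third leaf $w_\ell$. The star shape is chosen precisely so that such a companion leaf is always available; by contrast, a ``pathlike'' $(h-3)$-tree would admit non-edges joining its two extremes whose only $K_{h-1}$-minor exhausts every vertex, spoiling edge-maximality. Handling residues of $n$ modulo $h-1$ (via the padding clique $K_r$) is routine and does not affect the asymptotic count.
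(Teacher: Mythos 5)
Your proposal is correct, and it follows the same overall strategy as the paper: compare a dense $H$-free graph of density tending to $h-3$ against the sparse edge-maximal graph formed by disjoint copies of $K_{h-1}$ (plus a padding clique) of density $(h-2)/2$, giving the gap $(h-3)-\tfrac{h-2}{2}=\tfrac{h-4}{2}$ per vertex. The only genuine difference is the dense witness. The paper takes the complete join of $K_{h-4}$ with a cycle $C_{n-h+4}$, and argues $H$-freeness by noting that producing a $K_{h-1}$-minor forces contracting the cycle to a triangle, which exhausts all $n$ vertices and leaves nothing for the pendant leaf; in particular that graph does contain $K_{h-1}$ as a minor. Your star-shaped $(h-3)$-tree is certified more cleanly: its treewidth is $h-3$, so it has no $K_{h-1}$-minor at all, hence no $H$-minor; your edge counts $(h-3)n-\binom{h-2}{2}$ versus $\tfrac{h-2}{2}n+O(1)$ are right, and your three-case check of maximality (including the third case, where the $K_{h-1}$-minor uses all of $\{w_i,w_j,v_1,\dots,v_{h-2}\}$ and a third leaf $w_\ell$ supplies the pendant) is sound for $n\ge h+1$. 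One remark: proving edge-maximality of the dense graph is unnecessary work, since $M^+_H(n)$ is bounded below by the edge count of \emph{any} $H$-free $n$-vertex graph (extend it to an edge-maximal one); only the sparse disjoint-clique graph needs to be edge-maximal to bound $M^-_H(n)$ from above, exactly as in the paper. Your extra verification is correct, so this does not affect validity, and it even yields the slightly stronger fact that the dense example itself lies in $E_H(n)$.
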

\begin{proof}
 Let $n = m(h-1)+k$, $0 \leq k \leq h-2$. Let $G_1$ be the union of $m$ disjoint copies of $K_{h-1}$ and one copy of $K_k$. Clearly $G_1$ is edge-maximal $H$-free. Also, $v(G_1) = n = m(h-1)+k$ and
 \[
  e(G_1) = m \binom{h-1}{2}+\binom{k}{2} \leq \frac{h-2}{2} n.
 \]
 
We construct a denser $n$-vertex graph $G_2$ as follows. We start with a clique on $h-4$ vertices and a cycle $C_{n-h+4}$. We then build a complete bipartite graph between the clique and the cycle (see Figure \ref{figure:cliquePlusALeaf}). To see that $G_2$ is $H$-free note that in order to obtain a clique on $h-1$ vertices we would need to contract the cycle $C_{n-h+4}$ to a triangle, but then we would only have $h-1$ vertices left in the graph. But
\[
  e(G_2) = \binom{h-4}{2}+n-h+4 + (h-4)(n-h+4) = (h-3) n - \frac{(h-1) (h-4)}{2}.
\]
Hence
\[
 \frac{e(G_2) - e(G_1)}{n} \geq \frac{n \left (h-3 - \frac{h-2}{2} \right )-\frac{(h-1) (h-4)}{2} }{n} \to \frac{h-4}{2}
\]
 as $n \to \infty$.
\end{proof}

\begin{figure}[htb] \centering
  \begin{tikzpicture}
    \tikzstyle{vertex}=[draw,shape=circle,minimum size=5pt,inner sep=0pt]
    
    \node[vertex] (Q-1) at (0,0) {~};
    \node[vertex] (Q-2) at (0,1) {~};
    \node[vertex] (Q-3) at (0,2) {~};
    \node[vertex] (Q-4) at (0,3) {~};
    \node[vertex] (Q-5) at (0,4) {~};
    \foreach \x/\y in {1/2,2/3,3/4,4/5} {
     \draw [color=black] (Q-\x) -- (Q-\y);
     }
    \draw [color=black] (Q-5) .. controls +(-0.5,0.5) and +(-0.5,-0.5) .. (Q-1);
    \draw[black] (-1,2) node {$C_5$};
    
    \node[vertex] (P-1) at (3,3) {~};
    \node[vertex] (P-2) at (2.75,2) {~};
    \node[vertex] (P-3) at (3,1) {~};
     \draw [color=black] (P-1) -- (P-2);
     \draw [color=black] (P-1) -- (P-3);
     \draw [color=black] (P-2) -- (P-3);
    \draw[black] (3.5,2) node {$K_3$};
    
    \draw[black] (1,-0.5) node {$G_2$};
    
    \foreach \x in {1,2,3,4,5}
    \foreach \y in {1,2,3} {
     \draw [color=black] (P-\y) -- (Q-\x);
     }
     
  \end{tikzpicture}
  \caption{Graph $G_2$ as defined in Lemma \ref{lem:cliquePlusALeaf} for $h=7$ and $n=8$.}
  \label{figure:cliquePlusALeaf}
\end{figure}
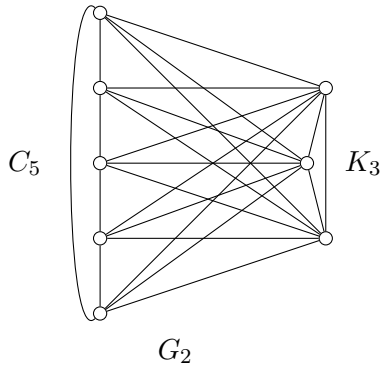

\begin{observation}
 \label{obs:pan}
 In Lemma \ref{lem:cliquePlusALeaf} we prove linear impurity of $\Ex(H)$ when the clique in $H$ contains at least $4$ vertices. Indeed, when $H$ is the \emph{pan graph} on $4$ vertices, consisting of a triangle and a pendant edge, then $\Ex(H)$ is near-pure with $\gap_H(n) = 1$ for all $n \geq 4$. To see this, observe that every connected component of an $H$-free graph is either a cycle or a tree, and an edge-maximal $H$-free graph has at most one acyclic component (in fact, this component can be any tree except a path $P_m$ on $m \geq 3$ vertices which we could close to a cycle without creating an $H$-minor).
\end{observation}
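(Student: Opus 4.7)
My plan is to exploit the simplicity of $H$ (a triangle with a pendant edge) to classify edge-maximal $H$-free graphs component-by-component, pin down their edge counts to two values, and exhibit each value for every $n \geq 4$. The first step is to show that every connected $H$-free graph is either a tree or a single cycle. Suppose a component $C$ contains a cycle $K$ together with an extra edge $e$. If $e$ is a pendant attached to $K$ at a vertex $u$, then contracting $K$ to a triangle based at $u$ already produces the pan. If $e$ is a chord of $K$, then $K$ together with the chord yields two shorter cycles sharing an edge; contracting one of them to a triangle, the remaining arc plus the chord produces a triangle with a pendant edge. Any more elaborate cycle-plus-extra-edge configuration contains one of these two patterns, so no $H$-free component carries both a cycle and any additional edge.

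With this component structure in hand, I would use edge-maximality to control how the components assemble. Two acyclic components can always be joined by a bridge to form a larger tree, so an edge-maximal $H$-free graph has at most one tree component. That tree cannot be a path $P_m$ with $m \geq 3$, since closing the path into a cycle $C_m$ leaves the graph $H$-free and contradicts maximality; conversely, if the tree is $K_1$, $K_2$, or any tree with a vertex of degree at least three, then no edge can be added, because either it creates a cycle through a high-degree vertex (forcing the pan inside a single component) or it joins the tree to a cycle component, where contracting the cycle to a triangle again yields the pan. Counting edges now gives $e(G) = n$ when every component is a cycle and $e(G) = (n-t) + (t-1) = n - 1$ when a unique tree component has $t \geq 1$ vertices, so $\gap_H(n) \leq 1$.

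To obtain $\gap_H(n) = 1$ for every $n \geq 4$, I would exhibit one edge-maximal $H$-free graph of each edge count: take $C_n$ for the upper value $n$, and $C_{n-1} \cup K_1$ for the lower value $n - 1$ (the isolated vertex is a legitimate unique tree component, since joining it to the cycle would create the pan). The main obstacle is the first step, verifying that every cycle-containing $H$-free component is a single cycle; the argument is a short case analysis (pendant, chord, two cycles sharing vertices), but each subcase must be checked to reduce, after a suitable contraction, to a triangle plus a pendant edge. Because the pan has only four vertices and three edges, each subcase reduces cleanly.
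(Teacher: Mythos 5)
Your proposal is correct and follows essentially the same route as the paper: classify components of an $H$-free graph as cycles or trees, use edge-maximality to force at most one acyclic component (which cannot be a path $P_m$, $m\geq 3$), and conclude that the edge count is $n$ or $n-1$, both achieved for $n \geq 4$ (e.g.\ by $C_n$ and $C_{n-1}$ plus an isolated vertex). Your write-up simply spells out the case analysis and the extremal examples that the paper leaves implicit.
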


\begin{lemma}
 \label{lem:leafAdjacentToABiggie}
Let the connected graph $H$ have exactly one leaf $v$, with neighbour $u$. Let $H' = H - v$ satisfy $\delta' := \delta(H') \geq 2$, and suppose that there is a vertex $w \neq u$ in $H'$ with  $\deg_{H'}(w) = \delta'$.
Then the class $\Ex(H)$ is not pure.
\end{lemma}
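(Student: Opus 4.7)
The strategy is to exhibit two edge-maximal $H$-free graphs on the same vertex set with different edge counts. For the denser candidate $G_1$, take a disjoint union of copies of $K_{h-1}$ (and, if necessary, a smaller clique to match $n$ mod $h-1$); this is edge-maximal $H$-free because each $K_{h-1}$ is too small to contain $H$, while adding any edge between two components creates a $K_{h-1}$ with a pendant edge. Such a graph contains $H$ as a subgraph: embed $H' \subseteq K_{h-1}$ with the endpoint of the new edge in $K_{h-1}$ mapped to $u$, and the other endpoint of the new edge as the leaf $v$.

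For the alternative construction $G_2$, the key is to exploit the fact that some $w \neq u$ in $H'$ achieves the minimum degree $\delta'$. The plan is to build $G_2$ around a gadget containing a distinguished vertex $p$ of degree exactly $\delta'$, designed so that in any embedding of $H$ into $G_2$ this vertex $p$ is forced to play the role of a min-degree vertex of $H'$. Indeed, $\deg_{G_2}(p) = \delta'$ while $\deg_H(u) = \deg_{H'}(u)+1 \geq \delta'+1$, so $p$ cannot map to $u$; and $p$ cannot map to $v$ unless the leaf edge lands in a specific slot that the construction forbids. The hypothesis $w \neq u$ then supplies the crucial flexibility, since the min-degree vertex that $p$ must impersonate is not itself $u$, and the remaining structure of $G_2$ around $p$ can be arranged so that no consistent placement of $u$ and $v$ survives. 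Concretely, a natural candidate for the gadget is two copies of $K_{h-1}$ amalgamated along a $\delta'$-subset of vertices chosen to match $N_{H'}(w) \cup \{w\}$ (or a single $K_{h-1}$ joined to a carefully-chosen low-degree attachment), with the edge count differing from $(h-2)/2$ per vertex; extra disjoint copies of $K_{h-1}$ are then appended to both $G_1$ and $G_2$ to bring the total vertex counts into agreement.

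The main obstacle will be verifying, uniformly across all $H$ satisfying the hypotheses, that this gadget $G_2$ is simultaneously $H$-free and edge-maximal. The $H$-freeness requires a case analysis over branch-set assignments for the $h$ vertices of $H$: one uses the degree bound at $p$ (which restricts $p$'s preimage to vertices of $H$ of degree at most $\delta'$) and then eliminates each remaining case by exploiting both $\delta(H') \geq 2$ (to preclude placing the $H'$-structure with its min-degree slot in an incompatible part of the gadget) and $w \neq u$ (to prevent the leaf edge of $H$ from being routed through the low-degree slot). Edge-maximality then follows by showing that every added edge produces, via a contraction of a short path through the gadget, a $K_{h-1}$ with a pendant and hence an $H$-minor. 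Given the verification, the differing edge densities of $G_1$ and $G_2$ on the matched vertex count yield $\gap_H(n)>0$ for the chosen $n$, so $\Ex(H)$ is not pure.
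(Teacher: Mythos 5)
Your proposal has the right general shape (exhibit two edge-maximal $H$-free graphs on the same vertex set with different edge counts, one of them the disjoint union of copies of $K_{h-1}$), but the entire content of the lemma lies in the second construction, and that is exactly what you leave unverified: you state yourself that ``the main obstacle will be verifying\dots that this gadget $G_2$ is simultaneously $H$-free and edge-maximal'' and then never carry out that verification, nor even fix the gadget. Worse, the concrete candidate you do name --- two copies of $K_{h-1}$ amalgamated along a $\delta'$-set --- is in general \emph{not} $H$-free: the connectivity $k$ of $H'$ satisfies $k \leq \delta'$, so $H'$ admits a separation of order $k$ whose two sides each have at most $h-2$ vertices; placing the separator inside the overlap and the two sides in the two cliques embeds $H'$ as a subgraph, and since only $h-1$ of the $2(h-1)-\delta'$ vertices are used, a spare clique vertex can serve as the leaf $v$. (This is precisely why the related construction in Lemma~\ref{lem:leafAdjacentToATiny} overlaps the cliques in only $k-1$ vertices, strictly below the connectivity.) The degree-forcing argument at the single vertex $p$ is also not sound for minors: branch sets may absorb $p$ or avoid it altogether, so $\deg_{G_2}(p)=\delta'$ by itself forces nothing about where $u$ or $v$ can sit. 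Finally, you never rule out the possibility that your two constructions have \emph{equal} edge counts, and this is not a technicality: in the paper's proof the natural pair of constructions does collide exactly when $\delta'=(h+2)/3$, and handling that coincidence requires two further bespoke constructions (one for $h>7$, and a separate one for $h=7$, $\delta'=3$).

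For comparison, the paper's proof works with a fixed $(h+1)$-vertex graph $G_1$: a clique on $h-2$ vertices plus three independent vertices of degree $\delta'-1$ whose neighbourhoods share $\delta'-2$ common vertices and differ in one vertex each. $H$-freeness is proved by a counting argument (after a single contraction or deletion at least two vertices of degree $<\delta'$ remain, while $H$ has at most one such vertex), and edge-maximality is where the hypothesis that some $w\neq u$ has $\deg_{H'}(w)=\delta'$ is used, via explicit placements of $w$, $u$ and $v$ after adding any edge --- note this is the opposite of your plan, which tries to use $w\neq u$ for $H$-freeness. The comparison graph is simply $K_{h-1}\cup K_2$, and the residual equality case $\delta'=(h+2)/3$ is settled by the modified constructions mentioned above. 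Without a pinned-down gadget, a correct $H$-freeness and maximality verification, and a treatment of the equal-count case, your argument does not establish the lemma.
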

\begin{proof}
By Lemma~\ref{lem:cliquePlusALeaf}, we may assume that $H'$ is not complete.  Thus $2 \leq \delta' \leq h-3$, and so $h \geq 5$.

Let $G_1$ be the graph on vertex set $[h+1]$ constructed as follows. Start with a clique on $\{1,2,\ldots,h-2\}$. Next, for $i=1,2,3$, connect the vertex $h-2+i$ to $1, 2, \ldots, \delta'-2$, as well as to $\delta'-2+i$ (see Figure \ref{figure:leafAdjacentToABiggie}). Clearly, $e(G_1) = \binom{h-2}{2} + 3(\delta'-1)$. To see that $G_1$ is $H$-free, note that it has an independent set of 3 vertices each of degree $< \delta'$, so after one edge-contraction there must still be at least two vertices of degree $< \delta'$.

Next we show that $G_1$ is edge-maximal $H$-free.  Suppose that we add an edge $e$ to $G_1$, where wlog $e$ is incident to vertex $h-1$.  There are now two cases.
(a) Suppose that $e$ is incident to $h$ or $h+1$, wlog to $h$.  Contract $e$ to form a new vertex called $w$, and place $v$ at $h+1$.  If $uw \in E(H)$ then place $u$ at vertex 1; and if not then place $u$ at vertex $\delta'+1$.
(b) Suppose that $e$ is incident to a vertex in $\{\delta',\ldots,h-2\}$.  Then $e$ is not incident to at least one of vertices $\delta', \delta'+1$, wlog the former.  Place $w$ at vertex $h-1$, place $v$ at $h$, and delete vertex $h+1$. If $uw \in E(H)$ then place $u$ at vertex 1; and if not then place $u$ at vertex $\delta'$. 

We construct the graph $G_2$ as a disjoint union of $K_{h-1}$ and the edge $\{h,h+1\}$.  Clearly $G_2$ is edge-maximal $H$-free, and $e(G_2) = \binom{h-1}{2} + 1$.
 
We have $e(G_1) \neq e(G_2)$ unless $\delta' = (h+2)/3$. Note that the smallest value of $h$ for which this could hold with both $h$ and $\delta'$ being integers is $h=7$ (which gives $\delta' = 3$).

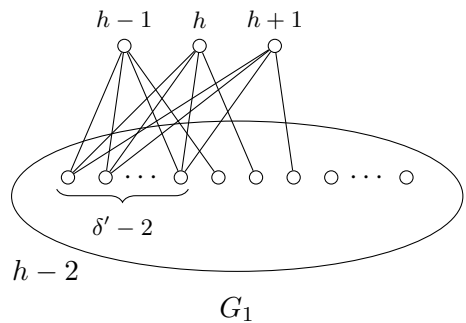
\begin{figure}[htb] \centering
  \begin{tikzpicture}
    \tikzstyle{vertex}=[draw,shape=circle,minimum size=5pt,inner sep=0pt]

    \draw (1.25,0) ellipse (3cm and 1cm);
    \node[vertex] (R-1) at (-0.25,2) {~};
    \draw[black] (-0.25,2.35) node {\footnotesize $h-1$};
    \node[vertex] (R-2) at (0.75,2) {~};
    \draw[black] (0.75,2.35) node {\footnotesize $h$};
    \node[vertex] (R-3) at (1.75,2) {~};
    \draw[black] (1.75,2.35) node {\footnotesize $h+1$};
    \node[vertex] (Q-1) at (-1,0.25) {~};
    \node[vertex] (Q-2) at (-0.5,0.25) {~};
    \draw[black] (0,0.25) node {$\cdots$};
    \node[vertex] (Q-3) at (0.5,0.25) {~};
    \node[vertex] (Q-4) at (1,0.25) {~};
    \node[vertex] (Q-5) at (1.5,0.25) {~};
    \node[vertex] (Q-6) at (2,0.25) {~};
    \node[vertex] (Q-7) at (2.5,0.25) {~};
    \draw[black] (3,0.25) node {$\cdots$};
    \node[vertex] (Q-8) at (3.5,0.25) {~};
    \draw[black] (-1.3,-1) node {$h-2$};
    \draw[black] (1.25,-1.5) node {$G_1$};
    
    \draw [decorate,decoration={brace,amplitude=5pt,mirror}, xshift=0pt,yshift=-3pt] (-1.15,0.2) -- (0.6,0.2) node [black,midway,yshift=-0.5cm] {\footnotesize $\delta'-2$};
    
    \foreach \x in {1,2,3}
     \foreach \y in {1,2,3}
      \draw (R-\x) -- (Q-\y);
    \foreach \x/\y in {1/4,2/5,3/6}
      \draw (R-\x) -- (Q-\y);
    
  \end{tikzpicture}
  \caption{Graph $G_1$ as defined in Lemma \ref{lem:leafAdjacentToABiggie}.}
  \label{figure:leafAdjacentToABiggie}
\end{figure}
 
 If we have $\delta' = (h+2)/3$ and $h > 7$, which implies that $\delta'-2 = (h-4)/3 < h-6$, then we alter our constructions of $G_1$ and $G_2$ as follows. We take the graph $G_1'$ consisting of $K_{h-2}$ together with four extra vertices $h-1, h, h+1, h+2$ such that $h-2+i$ is connected to $1, 2, \ldots, \delta'-2, \delta'-2+i$ for $1 \leq i \leq 4$ (observe that $\delta'-2+4 < h-2$). Then we have
 \[
  e(G_1') = \binom{h-2}{2} + 4(\delta'-1) = \binom{h-2}{2} + 4\frac{h-1}{3}.
 \]
 We compare $G_1'$ to $G_2'$ formed of disjoint copies of $K_{h-1}$ and $K_3$, which has $e(G_2') = \binom{h-1}{2}+3$. We then obtain
 \[
  e(G_1')-e(G_2') = \frac{h-7}{3} > 0.
 \]
 
 In the last remaining case where $h=7, \delta'=3$, we alter the construction a little bit. We build $G_1''$ on $10$ vertices, starting from a Hamiltonian cycle of edges $\{i,i+1\}$ (as usual, we identify vertex $11$ with $1$). Then we add edges to make the even vertices into a clique. Thus we have $e(G_1'') = 20$. Graph $G_1''$ is edge-maximal $H$-free for exactly the same reasons as our previous constructions: the odd vertices all have degree $2 < \delta'$ and form an independent set, while the union of the neighbourhoods of any two of them has size either $3$ or $4$. We take $G_2''$ to be a disjoint union of $K_6$ and $K_4$, so clearly $e(G_2'') = 15+6 = 21$. This completes the proof of the lemma.
\end{proof}

\begin{lemma}
 \label{lem:leafAdjacentToATiny}
 Let $H$ be a graph on $h \geq 6$ vertices with exactly one leaf $v$ and such that the graph $H' = H - v$ has connectivity $k$ for some $2 \leq k \leq h-4$. Also, let the unique neighbour of $v$ in $H$ be $u$. If $\deg_H(u) = \delta(H')+1$ then the class $\Ex(H)$ is not pure.
\end{lemma}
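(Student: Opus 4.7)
The approach is to exhibit two edge-maximal $H$-free graphs on the same number of vertices with different edge counts, in the spirit of Lemma~\ref{lem:leafAdjacentToABiggie} but adapted to the situation where $u$ itself attains the minimum degree $\delta' := \delta(H')$. In view of Lemma~\ref{lem:leafAdjacentToABiggie}, we may moreover assume that $u$ is the \emph{unique} vertex of $H'$ of degree $\delta'$, since otherwise that earlier lemma already delivers non-purity. The ``sparse'' reference graph $G_2$ would be taken as a disjoint union of a copy of $K_{h-1}$ with a small clique chosen to match the order of $G_1$; since $H$ has a leaf, $K_{h-1}$ is leaf-and-edge-maximal $H$-free (as in the discussion preceding Lemma~\ref{lem:IOmaximal}), so $G_2$ is edge-maximal $H$-free with edge-to-vertex ratio essentially $(h-2)/2$.

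The second graph $G_1$ would be built from a $K_{h-2}$ on vertices $\{1,\ldots,h-2\}$ by attaching one or two pendant-type vertices of degree $\delta'-1$, each joined to a common set of $\delta'-2$ clique vertices plus one additional private vertex, in analogy with Lemma~\ref{lem:leafAdjacentToABiggie} but using fewer pendants. The use of \emph{fewer} pendants reflects the fact that $H$ here possesses only a single vertex of degree less than $\delta'$ (the leaf $v$), so the three-pendant construction of Lemma~\ref{lem:leafAdjacentToABiggie} would fail to be edge-maximal. The order of $G_1$ is then brought up to that of $G_2$ by adjoining further disjoint copies of $K_{h-1}$ and a small clique, as in the remark preceding Lemma~\ref{lem:twoDisjointLeaves}.

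To check that $G_1$ is $H$-free I would use the degree profile: the pendants have degree $\delta'-1<\delta'$, and since $u$ is the unique vertex of $H'$ of minimum degree and $v$ is the only vertex of $H$ of degree less than $\delta'$, a minor embedding would be forced to map $v$ (and only $v$) to a pendant, leaving no room for the other low-degree role; the small vertex count then rules out any branch-set embedding. For edge-maximality I would do a case analysis on the type of the added edge $e$ (between two pendants, between a pendant and a non-neighbour in $K_{h-2}$, or inside the clique part of $G_1$), in each case contracting $e$ or a short path through a pendant to produce a branch vertex of degree $\geq\delta'+1$ that plays the role of $u$, then routing the rest of $H'$ through $K_{h-2}$ using the assumed connectivity $k\leq h-4$ of $H'$ to guarantee there is enough structural room.

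The comparison $e(G_2)-e(G_1)$ evaluates to a simple linear expression in $h$, $\delta'$ and $k$ that is nonzero for all but finitely many parameter triples. The principal obstacle, already foreshadowed by the $\delta'=(h+2)/3$ and $h=7$ corner cases in Lemma~\ref{lem:leafAdjacentToABiggie}, will be the handful of exceptional triples for which this difference vanishes. These would be handled by perturbing the construction — adjusting the number of pendants, altering the size of the small clique attached in $G_2$, or redistributing the pendant neighbourhoods inside $K_{h-2}$ — to shift the balance; if necessary, a small ad hoc construction on a few extra vertices would cover the final residual cases.
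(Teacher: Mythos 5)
Your overall strategy (a sparse reference graph $K_{h-1}$ plus a small clique, versus a denser construction built around a large clique, plus a reduction via Lemma~\ref{lem:leafAdjacentToABiggie} to the case where $u$ is the unique vertex of $H'$ of degree $\delta'$) is reasonable in spirit, but the dense construction as described does not work, and the step that is supposed to certify edge-maximality fails concretely. With one pendant the graph has only $h-1$ vertices, so adding \emph{any} edge cannot create an $H$-minor and the graph is never edge-maximal in the required sense. With two pendants of degree $\delta'-1$, adding the edge between them does \emph{not} create an $H$-minor: contracting that edge produces a vertex of degree exactly $(\delta'-2)+1+1=\delta'$, not $\geq\delta'+1$, and leaves only $h-1$ vertices, while keeping the edge uncontracted leaves both endpoints with degree $\delta'$, which is still below $\deg_H(u)=\delta'+1$ and below the degree of every vertex of $H'\setminus\{u\}$ once $u$ is the unique minimum-degree vertex. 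So your case ``between two pendants'' of the maximality analysis is wrong, and the graph you describe is not edge-maximal (the fix is to include the pendant--pendant edge from the start, after which maximality and $H$-freeness can be checked, but that is not what you argue). Moreover, the resulting edge-count difference against $K_{h-1}\sqcup K_1$ is $h-1-2\delta'$, which vanishes on the \emph{infinite} family $\delta'=(h-1)/2$ ($h$ odd), not on ``all but finitely many parameter triples''; so the exceptional cases are not a finite residue to be swept up ad hoc, and the perturbation step carries real weight that you have not supplied. Finally, your construction never uses the hypothesis $2\le k\le h-4$ on the connectivity of $H'$, which is the hypothesis the lemma is actually built around; that is a strong hint that the intended argument is elsewhere.

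The paper's proof is different and avoids all of this. It takes $G_1$ to be the union of two cliques on $(h-2)$-sets $A,B$ with $|A\cap B|=k-1$, so $v(G_1)=2h-k-3$; this is $H'$-free (hence $H$-free) because a model of $H'$, having connectivity $k$, cannot be split across a cut of size $k-1$ and cannot fit inside a single $(h-2)$-set. Adding any edge and contracting it yields two $(h-2)$-cliques overlapping in $k$ vertices, which contain $H'$ as a subgraph; the hypothesis $\deg_{H'}(u)=\delta(H')$ is used precisely to guarantee that in the only delicate placement (all of $H'$ but one vertex inside $A$), the vertex left in $B\setminus A$ has degree $k=\delta(H')$ and can be taken to be $u$, so the leaf $v$ can be attached. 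The sparse graph is $G_2=K_{h-1}\sqcup K_{h-k-2}$, which automatically has the same number of vertices $2h-k-3$, and $e(G_1)=e(G_2)$ has no integer solutions with $h\ge 6$ and $2\le k\le h-4$, so no exceptional cases remain. If you want to pursue your pendant-based construction, you would need to commit to the two-pendants-plus-connecting-edge version, redo the maximality analysis correctly, and give a genuine argument for the family $\delta'=(h-1)/2$ (for instance by comparing $G_1\sqcup K_2$ with $K_{h-1}\sqcup K_3$).
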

\begin{proof}
 We use a similar construction as in Lemma \ref{lem:noLeaves}. Let $A$ and $B$ be $(h-2)$-sets with $|A \cap B|=k-1$.  Let $G_1$ be the union of a clique on $A$ and a clique on $B$. This graph clearly has $|A \cup B| =2h-k-3 \geq h+1$ vertices and $2\binom{h-2}{2}-\binom{k-1}{2}$ edges. To see why $G_1$ is $H$-free we note that it is $H'$-free, since we cannot have a model of $H'$ within $A$ or within $B$.
 
 On the other hand, adding a single edge to $G_1$ and contracting it gives us a graph on at least $h$ vertices consisting of a union of two cliques on $h-2$ vertices each that overlap in $k$ vertices. Let us show that this graph is not $H$-free. We can obviously find $H'$ in this graph as a subgraph because $H'$ has $h-1$ vertices and connectivity (exactly) $k$. The only time we need to worry about being able to add the leaf $v$ to our minor is when all but one of the vertices of $H'$ are located in $A$ and only one in $B \setminus A$ (or vice-versa). But then that one vertex (say vertex $x$) would have degree at most $k$ in $H'$, so $\deg_{H'}(x)= \delta(H') = k$, and now we can place vertex $u$ at $x$.
 
 We take $G_2$ to be a disjoint union of $K_{h-1}$ and $K_{h-k-2}$, which is clearly seen to be edge-maximal $H$-free. We have $e(G_2) = \binom{h-1}{2}+\binom{h-k-2}{2}$. The only integer solutions to $e(G_1) = e(G_2)$ are $h=1, k=0$ and $h=5, k=2$. This completes the proof.
\end{proof}
The next lemma fills one of the gaps left by Lemma \ref{lem:leafAdjacentToATiny}.
\begin{lemma}
 \label{lem:leafPlusAlmostClique}
 Let $H$ be a graph on $h \geq 6$ vertices with exactly one leaf $v$ and such that the graph $H' = H - v$ has connectivity $h-3$. Then the class $\Ex(H)$ satisfies $\limp(H) \geq \frac{h-5}{2}>0$.
\end{lemma}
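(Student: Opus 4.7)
The plan is to follow the strategy of Lemma~\ref{lem:cliquePlusALeaf}, constructing a sparse edge-maximal $H$-free graph $G_1$ and a dense $H$-free graph $G_2$ whose edge counts differ by roughly $\tfrac{h-5}{2}\,n$. A preliminary observation is that, by Fact~\ref{fact:highlyConnected} applied to the non-complete graph $H'$, the connectivity hypothesis forces $\delta(H')=h-3$, so every vertex of $H'$ has at most one non-neighbour and therefore $H'=K_{h-1}-M$ for some non-empty matching $M$.

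I would take $G_1$ to be a disjoint union of copies of $K_{h-1}$ (together with a smaller clique to absorb the residue of $n$ modulo $h-1$): this is edge-maximal $H$-free for exactly the same reason as in Lemma~\ref{lem:cliquePlusALeaf} (any new bridge produces a $K_{h-1}$ with an attached leaf, which contains $H$ as a subgraph), and $e(G_1)\sim\tfrac{h-2}{2}n$. For the dense construction I would let
\[
G_2 \;=\; K_{h-4}\,+\,kK_2,
\]
that is, a clique $I$ on $h-4$ vertices completely joined to $k$ pairwise vertex-disjoint edges $P_1,\dots,P_k$. Then $v(G_2)=h-4+2k$ and $e(G_2)=\binom{h-4}{2}+k(2h-7)\sim\tfrac{2h-7}{2}n$, giving the desired density gap of $\tfrac{h-5}{2}$ per vertex.

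The main step is to prove that $G_2$ is $H$-free; since $H\supseteq H'$ it suffices to show $G_2$ is $H'$-free. Suppose for a contradiction that $B_1,\dots,B_{h-1}$ is an $H'$-model in $G_2$. Let $a$ denote the number of branch sets meeting $I$, so that $a\leq h-4$ and $\ell:=h-1-a\geq 3$ branch sets are entirely contained in $\bigcup_j P_j$. Because the $P_j$'s lie in distinct components of $G_2-I$, each such ``ear'' branch set is contained in a single $P_j$, as either a singleton or the whole edge; two ear branches are adjacent in the quotient if and only if they are the two singletons of a common $P_j$. Writing $s$ for the number of such split pairs, there are at most $s\leq\lfloor\ell/2\rfloor$ adjacent ear-pairs and hence at least $\binom{\ell}{2}-s$ non-adjacent ear-pairs in the quotient; since apex-apex and apex-ear quotient pairs are automatically adjacent, every one of these non-adjacent ear-pairs must be a non-edge of $H'$, i.e., must lie in the matching $M$. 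But for $\ell=3$ the (at least two) non-adjacent pairs always share a common ``odd ear out'' and so cannot form a matching, and for $\ell\geq 4$ the graph of non-adjacent ear-pairs contains $K_{2,2}$ and is still further from a matching. This contradiction yields $\ell\leq 2$, hence $a\geq h-3$, contradicting $a\leq h-4$.

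To finish, I would extend $G_2$ to any edge-maximal $H$-free supergraph $\widetilde G_2$ (only enlarging the edge count), handle arbitrary $n$ by attaching one extra vertex to $I$ as a $K_1$-ear when the parity of $n-h+4$ requires it (the additional vertex clearly preserves the above counting argument) and appending at most one disjoint $K_{h-1}$ as needed, each of which alters the edge count by $O(1)$. Comparing $G_1$ with $\widetilde G_2$ then yields $\gap_H(n)\geq\tfrac{h-5}{2}n-O(1)$, so $\limp(H)\geq\tfrac{h-5}{2}>0$. The main obstacle I expect is the combinatorial case analysis in the heart of the argument: one must correctly enumerate the possible distributions of the $\ell\geq 3$ branch sets across the ears (singleton versus whole $P_j$) and verify that in every case the graph of non-adjacent ear-pairs is too dense to be covered by any matching.
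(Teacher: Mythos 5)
Your proposal is correct and takes essentially the same route as the paper: your dense graph $K_{h-4}+kK_2$ is exactly the paper's union of $m$ cliques $K_{h-2}$ overlapping in a common $(h-4)$-set, and the sparse comparison graph (disjoint copies of $K_{h-1}$ plus one smaller clique), the parity fix by an extra vertex, and the resulting density gap $\tfrac{h-5}{2}$ all coincide with the paper's argument. The only difference is cosmetic: you verify $H'$-freeness via the explicit structure $H'=K_{h-1}$ minus a non-empty matching and a counting of non-adjacent ear pairs, whereas the paper invokes the connectivity argument of Lemma~\ref{lem:leafAdjacentToATiny} (at least three branch sets must avoid the common $(h-4)$-set, so deleting fewer than $h-3$ vertices would disconnect $H'$); both verifications are valid.
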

\begin{proof}
For each $m \geq 2$, let $n = h-4+2m$ and let the $n$-vertex graph $G_1$ be the union of $m$ cliques, each on $h-2$ vertices, that overlap in a common set of $h-4$ vertices (see Figure \ref{figure:leafPlusAlmostClique}). As in Lemma \ref{lem:leafAdjacentToATiny}, $G_1$ is $H$-free and has size $e(G_1) = \binom{h-4}{2} + m(2(h-4)+1)$. Let $G_2$ be a disjoint union of (as many a possible) cliques on $h-1$ vertices and possibly one smaller clique containing the remaining $k$ vertices, where $0 \leq k \leq h-2$. Then $G_2$ is edge-maximal $H$-free.
 
It is easy to see that, as $n \to \infty$, we have $e(G_1) = (1+o(1))(2(h-4)+1)n/2$, while $e(G_2) = (1+o(1))(h-2)n/2$. Thus
 \[
  \frac{e(G_1)-e(G_2)}{n} \to \frac{h-5}{2},
 \]
 as desired. Note that we do not need $G_1$ to be edge-maximal here, and so for $n = h-4+2m+1$ we can just take $G_1$ plus an isolated vertex.
\end{proof}

\begin{figure}[htb] \centering
  \begin{tikzpicture}
    \tikzstyle{vertex}=[draw,shape=circle,minimum size=5pt,inner sep=0pt]

    \draw (0,0) ellipse (1cm and 1cm);
    \draw[black] (0,0) node {$h-4$};
    
    \foreach \name/\x/\y in {11/-2/0.5, 12/-2/-0.5, 21/2/0.5, 22/2/-0.5, 31/0.5/2, 32/-0.5/2, 41/0.5/-2, 42/-0.5/-2}
      \node[vertex] (P-\name) at (\x,\y) {~};
    \foreach \x/\y in {11/12, 21/22, 31/32, 41/42}
      \draw (P-\x) -- (P-\y);
    \foreach \y in {0.4, 0.15, -0.1}
      \draw (P-11) -- (-0.6,\y);
    \foreach \y in {-0.4, -0.15, 0.1}
      \draw (P-12) -- (-0.6,\y);
    \foreach \y in {0.4, 0.15, -0.1}
      \draw (P-21) -- (0.6,\y);
    \foreach \y in {-0.4, -0.15, 0.1}
      \draw (P-22) -- (0.6,\y);
    \foreach \x in {0.4, 0.15, -0.1}
      \draw (P-31) -- (\x,0.6);
    \foreach \x in {-0.4, -0.15, 0.1}
      \draw (P-32) -- (\x,0.6);
    \foreach \x in {0.4, 0.15, -0.1}
      \draw (P-41) -- (\x,-0.6);
    \foreach \x in {-0.4, -0.15, 0.1}
      \draw (P-42) -- (\x,-0.6);
    
  \end{tikzpicture}
  \caption{Graph $G_1$ with $m=4$ as defined in Lemma \ref{lem:leafPlusAlmostClique}.}
  \label{figure:leafPlusAlmostClique}
\end{figure}
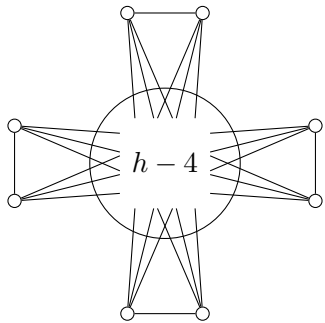

\begin{observation}
 \label{obs:h=5oneLeaf}
 The last remaining graphs that we need to consider are the connected graphs $H$ on $5$ vertices that have exactly one leaf $v$ and are such that $H-v$ is 2-connected but is not a complete graph. Up to isomorphism, there are exactly three such graphs $H$, and they each give rise to classes satisfying $\limp(H)\geq \frac12$. Consider $n>4$. In each case, our `dense' example is the disjoint union of $\lfloor n/4 \rfloor$ copies of $K_4$, together with a copy of $K_t$ where $t = n-4\lfloor n/4 \rfloor$ if $4 \nmid n$, which is an edge-maximal $H$-free graph with $3n/2+O(1)$ edges.

 \begin{enumerate}
  \item Let $H_1$ be $C_4$ with an added leaf. Then $C_n$ is an edge-maximal $H_1$-free graph with $n$ edges. Hence $\limp(H_1)\geq \frac12$.
  \item Let $H_2$ be a diamond ($K_4$ minus an edge), with an added leaf adjacent to a vertex of degree $2$ of the diamond. Then the graph obtained from $C_{n-1}$ by adding one vertex and joining it to two adjacent vertices on the cycle is an edge-maximal $H_2$-free graph with $n+1$ edges; and it follows that $\limp(H_2)\geq \frac12$.  
  \item Let $H_3$ be a diamond ($K_4$ minus an edge), with an added leaf adjacent to a vertex of degree $3$ of the diamond. Then the graph obtained from $K_4$ by subdividing one edge $n-4$ times (or equivalently, from $C_{n-1}$ by adding a vertex and joining it to three consecutive vertices on the cycle) is an edge-maximal $H_3$-free graph with $n+2$ edges; and it follows $\limp(H_3) \geq \frac12$.
 \end{enumerate}
\end{observation}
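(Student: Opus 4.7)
The plan is to verify the observation in three stages: a classification of the three graphs $H$, a uniform verification for the common ``dense'' construction, and three separate verifications for the sparse examples; the bulk of the work goes into showing edge-maximality of the sparse examples for $H_2$ and $H_3$.

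First, since $v$ is the unique leaf of $H$, the graph $H' = H - v$ is $2$-connected on $4$ vertices and not $K_4$, so $H'$ is either the $4$-cycle $C_4$ or the diamond $K_4 - e$. The $4$-cycle is vertex-transitive, so the leaf can be attached in only one way, giving $H_1$. The diamond has two vertex orbits (the two degree-$2$ vertices and the two degree-$3$ vertices), yielding $H_2$ and $H_3$, respectively, for a total of three graphs.

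Second, for the dense example $G_1 = \lfloor n/4 \rfloor K_4 \cup K_t$ with $t = n - 4\lfloor n/4\rfloor$, every component has at most $4 < v(H_i) = 5$ vertices, so $G_1$ has no $H_i$-minor for any $i$. Any edge added to $G_1$ joins two components and produces a component on at least $5$ vertices that contains $K_4$ together with an attached pendant (possibly hanging from a short path or triangle); since $K_4$ with a pendant contains $H_1, H_2, H_3$ each as a subgraph (obtained by deleting any single $K_4$-edge whose position relative to the pendant's attachment vertex distinguishes the three) the resulting graph is no longer $H_i$-free. The edge count $e(G_1) = 6\lfloor n/4\rfloor + \binom{t}{2} = 3n/2 + O(1)$ is immediate.

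Third, for each sparse construction I would establish $H_i$-freeness via a bound on external neighbours of connected subsets. For $H_1$: every minor of $C_n$ is a cycle or path, hence has maximum degree $\leq 2$, whereas $H_1$ has a degree-$3$ vertex; adding any chord $e$ yields a theta-like graph in which one may contract the longer of the two cycle arcs to obtain a $C_4$ with a spare internal vertex serving as the leaf. For $H_2$: let $G$ be the theta graph with arcs of lengths $1, 2, n-2$ between two branch vertices $\{1, 2\}$; deleting $\{1, 2\}$ leaves an isolated vertex $v$ (the midpoint of the length-$2$ arc) together with a path, so every connected $S \subseteq V(G)$ disjoint from $\{1, 2\}$ has $|N_G(S) \setminus S| \leq 2$. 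Hence any branch set of an $H_2$-minor of degree $\geq 3$ must intersect $\{1, 2\}$; since $H_2$ has three vertices of degree $\geq 3$ and $|\{1,2\}| = 2$, no $H_2$-minor exists. For $H_3$: let $G$ be the $K_4$-subdivision on one edge; a direct case analysis on how a connected $S$ intersects the four branch vertices $\{a, b, c, d\}$ shows $|N_G(S) \setminus S| \leq 3$ always, so every minor of $G$ has maximum degree $\leq 3$, while $H_3$ has a degree-$4$ vertex.

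Finally, the edge-maximality of the sparse examples for $H_2$ and $H_3$ is the main obstacle, which I would handle by explicit case analysis on the added edge. For $H_2$: the added edge lies in one of the families $\{v w_i\}$, $\{1 w_i\}$ (or symmetrically $\{2 w_i\}$), and $\{w_i w_j\}$ with $|i-j|\geq 2$; in each case one takes $\{1\}$ and $\{2\}$ as two degree-$3$ branch sets of the diamond, uses $\{v\}$ together with a carefully chosen subpath of the long arc (and the newly added edge) to realise the third degree-$3$ branch set and the remaining degree-$2$ branch set, and lets a stray subdivision vertex play the leaf. For $H_3$: the added edge either raises the degree of some $a, b, c, d$ (or of a contracted branch set including one of them) to $4$, which then plays $D_1$, while the remaining three of $\{a, b, c, d\}$ together with the natural partitioning of the subdivided path furnish $D_2, D_3, D_4$, and a free subdivision vertex supplies the leaf attached to $D_1$. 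Combined with the edge counts $n$, $n+1$, $n+2$ of the three sparse graphs and $3n/2 + O(1)$ for $G_1$, this establishes $\limp(H_i) \geq \tfrac12$ in each case.
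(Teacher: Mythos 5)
Your classification of the three graphs, the computation for the dense example, and the minor-freeness arguments for all three sparse graphs are correct, and your route is essentially the paper's: the Observation simply exhibits these constructions and asserts edge-maximality (the paper's own detailed analysis, in the appendix, uses different sparse examples to get the exact values of $\limp$). Your treatment of $H_1$ is complete, and your external-neighbour bounds ($|N_G(S)\setminus S|\le 2$ off $\{1,2\}$ for the theta graph, $|N_G(S)\setminus S|\le 3$ for the subdivided $K_4$) are a clean way to see $H_2$- and $H_3$-freeness.

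The gap is in the part you yourself flag as the main obstacle: edge-maximality for $H_2$ and $H_3$ is only a plan, and the recipe you state for $H_2$ fails for one of the three families you list. Write the theta graph with branch vertices $1,2$, added vertex $v$, and long arc $2-u_1-\cdots-u_{n-3}-1$. If the added edge is $\{v,u_i\}$, you cannot take $\{1\}$ and $\{2\}$ as the two degree-$3$ (hub) branch sets of the diamond: a connected set avoiding $1,2$ that is adjacent to both of them must contain $v$ or contain the entire arc $u_1,\ldots,u_{n-3}$ (the new edge does not change this, since $v$ attaches only to $u_i$, which already lies on the arc), so the two rim branch sets would have to be a set through $v$ and the whole arc, exhausting all $n$ vertices and leaving no vertex for the leaf. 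The case is still fine, but it needs an assignment in which $\{v\}$ is itself a hub: for $i\ge 2$ take hubs $\{1\},\{v\}$, rim sets $\{2\}$ and $\{u_i,\ldots,u_{n-3}\}$ (adjacent to $\{v\}$ through the new edge and to $\{1\}$ through $u_{n-3}$), with leaf $u_1$ attached to $\{2\}$; for $i=1$ take hubs $\{2\},\{v\}$, rim sets $\{1\}$ and $\{u_1\}$, with leaf $u_2$ attached to $\{u_1\}$. With that correction, and with the $H_3$ case analysis actually carried out (your sketch there does go through: the added edge always lets one of $a,b,c,d$, possibly merged with arc segments via the new edge, reach four disjoint neighbours, with a spare subdivision vertex as the leaf), the proposal establishes the Observation.
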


\begin{remark}
 In fact, it can be shown that $\limp(H_1) = 1/2$, $\limp(H_2) = 1$ and $\limp(H_3) = 2/3$ (see Appendix \ref{app:v5delta1}).
\end{remark}

This completes the proof of Lemma~\ref{thm:allNonPure}, and thus of Theorem~\ref{thm:connpure}.


\section{Forbidding several minors or disconnected minors}
\label{sec:generalizations}

We start this section by generalising Lemma \ref{lem:noLeaves} to a case where we may have more than one excluded minor, and the excluded minors need not be connected. For our proof to work, the forbidden set $\cH$ needs to satisfy specific and rather strict conditions.   Roughly, we require that one component of one excluded minor is `smallest' in several senses.  However, cases like $\cH = \{m C_h\}$ (that is, $m$ disjoint copies of the cycle $C_h$) for $h \geq 4$, or $\cH = \{ K_{2,3}, C_5 \}$, can be dealt with using the following result, which shows that in these cases the classes $\Ex(\cH)$ are linearly impure. 

\begin{lemma}
\label{lem:multipleMinors}
Let $\cH = \{H_1, H_2, \ldots, H_m\}$ be a set of $m \geq 1$ excluded minors.  Let $t_1,\ldots,t_m$ be positive integers.  For each $1 \leq i \leq m$, let $H_i = \bigcup_{j=1}^{t_i} H_i^j$; that is, let each graph $H_i$ be a disjoint union of connected graphs $H_i^j$ for $1 \leq j \leq t_i$. Assume that the following conditions hold:
\begin{enumerate}
 \item $v(H_1) = \min_{1 \leq i \leq m} v(H_i)$ and $v(H_1^1) = \min \left \{ v \left  (H_i^j \right ): 1 \leq i \leq m, 1 \leq j \leq t_i \right \} :=h$.
 \item $\delta(H_1^1) = \min_{1 \leq i \leq m} \delta(H_i) := \delta$ and $\delta$ satisfies $2 \leq \delta \leq v \left (H_1^1 \right )-2$.
 \item Taking $k_i^j$ to be the connectivity of $H_i^j$ we have $k_1^1 = \min \left \{ k_i^j: 1 \leq i \leq m, 1 \leq j \leq t_i \right \} : = k$.
\end{enumerate}
Then we have $\limp(\cH) \geq \frac{1}{2h}$.
\end{lemma}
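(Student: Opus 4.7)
The plan is to generalise the construction of Lemma~\ref{lem:noLeaves} using the parameters $h := v(H_1^1)$, $\delta := \delta(H_1^1)$, and $k := k_1^1$ of the distinguished smallest component. For each large $M$, I would let $G_1^{\circ}$ be the $n$-vertex graph formed from $(h-k+1)M$ copies of $K_{h-1}$ sharing a common $(k-1)$-set $I$, and let $G_2^{\circ}$ be formed from $(h-k)M$ copies of $K_{h-1}$ sharing $I$ together with $(h-k)M$ extra vertices, each attached by $\delta-1$ edges to distinct cliques so that its neighbourhood avoids $I$, exactly as in Lemma~\ref{lem:noLeaves}. The arithmetic of that proof gives $n = (h-k)(h-k+1)M + k-1$ and $e(G_1^{\circ}) - e(G_2^{\circ}) \ge (h-k)M/2 = \Omega(n)$.

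Next I would check $\cH$-freeness. Conditions~1 and~3 ensure that every component $H_i^j$ has at least $h$ vertices and connectivity at least $k$, so the argument of Lemma~\ref{lem:noLeaves} rules out any $H_i^j$-minor of the base structure: a single $K_{h-1}$ is too small to host $H_i^j$, and any two cliques share only the $(k-1)$-separator $I$, which is too narrow for a $k$-connected graph on $h$ vertices to span. Condition~2 ($\delta(H_i) \ge \delta$ for every $i$) then lets the degree-$(\delta-1)$ auxiliary vertices of $G_2^{\circ}$ be stripped from any hypothetical minor-model without affecting the minor, since each such vertex has a clique neighbourhood and so deleting it leaves its branch set both non-empty and connected. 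Hence no component $H_i^j$ embeds, and so neither does any $H_i$.

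The main obstacle is edge-maximality, where the argument of Lemma~\ref{lem:noLeaves} only shows that adding any edge to $G_i^{\circ}$ creates an $H_1^1$-minor in a local region (spanning at most two cliques and any relevant auxiliary vertices). When $H_1$ is connected we have $H_1 = H_1^1 \in \cH$, so this already gives $\cH$-edge-maximality and the calculation from Lemma~\ref{lem:noLeaves} directly yields $\limp(\cH) \ge 1/(2(h-k+1)) \ge 1/(2h)$. When $H_1$ is disconnected (for instance $\cH = \{m C_h\}$), I would enrich each $G_i^{\circ}$ by attaching at $I$ a constant-size auxiliary graph $S$, chosen so that (i)~the enlarged graph is still $\cH$-free, and (ii)~for each $i$ the portion of $S$ outside $I$ already contains $t_i-1$ disjoint minor-models of components of $H_i$ but not $t_i$ such models. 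A concrete choice is a sufficiently large clique $K_N$ meeting $G_i^{\circ}$ only in $I$, with $N$ tuned so that $K_N - I$ admits exactly $t_i-1$ vertex-disjoint $H_i^j$-minors for each relevant pair $(i,j)$; for example, in the case $\cH=\{mC_h\}$ one can take $N = (m-1)h + 2$. Adding any edge then combines the fresh $H_1^1$-minor in the base with the $t_i-1$ pre-seeded component-minors in $S$ to complete an $H_i$-minor. Since $|V(S)|$ and $e(S)$ depend only on $\cH$, the linear edge-gap $\Omega(n)$ is preserved, and we conclude $\limp(\cH) \ge 1/(2h)$.
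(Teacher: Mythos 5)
Your proposal matches the paper's proof: both take the $G_1,G_2$ construction of Lemma~\ref{lem:noLeaves} applied to $H=H_1^1$ and, when $t_1\ge 2$, glue a bounded-size clique onto the central set $I$ that pre-seeds disjoint models of $H_1^2,\ldots,H_1^{t_1}$ while keeping the graph $\cH$-free, so that any added edge completes an $H_1$-minor. The only difference is cosmetic: the paper simply takes that auxiliary clique to have exactly $v(H_1)-1$ vertices (with $k-1$ of them identified with $I$), which is the canonical instance of your ``sufficiently large $K_N$ tuned appropriately.''
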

\begin{proof}
The proof of this lemma is nearly identical to the proof of Lemma \ref{lem:noLeaves} when we take $H = H_1^1$. We amend the constructions of graphs $G_1$ and $G_2$ by adding to both of them a clique of size $v(H_1)-1$ and identifying $k-1$ vertices of that new clique with the ``small cut'' $I$ consisting of the central $k-1$ vertices in the previously built graphs. By our assumptions on $\cH$, these graphs are $\cH$-free, and adding an arbitrary edge to the graph creates an $H_1$-minor: we trivially find the graphs $H_1^2, \ldots, H_1^{t_1}$ in the ``large'' clique on $v(H_1)-1$ vertices (if $t_1 \geq 2$), and $H_1^1$ is created like $H$ was in Lemma \ref{lem:noLeaves}.
\end{proof}

\begin{remark}
\label{rem:disjointTriangles}
One interesting case that is not covered by Lemma \ref{lem:multipleMinors} is $\Ex(mK_3)$, i.e., the class of graphs with $m$ disjoint triangles excluded for some $m \geq 2$. However, building on the work of Corradi and Hajnal \cite{independentCircuits} on the number of disjoint cycles in graphs of given density, it was observed in \cite{sparseMinors} that every graph $G$ with $e(G) \geq (2m-1)v(G)$ contains $mK_3$ as a minor. Moreover, this bound is asymptotically sharp as demonstrated by the complete bipartite graph $G = K_{2m-1,n-2m+1}$.

On the other hand, any maximal $mK_3$-free graph can have at most one acyclic component, so $M^-_{mK_3}(n) \geq n-1$, and by analysing $G$ constructed from $K_{3m-1}$ by adding $n-3m+1$ pendant edges we see that $M^-_{mK_3}(n) = n+O(1)$. Hence we can conclude that $\limp(mK_3)= 2m-2$ for all $m \geq 2$.
\end{remark}

So far we have seen only two graphs $H$ such that the class $\Ex(H)$ is near pure. Namely, this happens when $H$ is the claw or the pan graph. However, in both cases $\gap_H(n) \leq 1$ and it is unclear whether there are more connected graphs $H$ such that $\Ex(H)$ is near pure, and if the answer to that question is positive, whether $\gap_H(n)$ can take arbitrarily large values (or in fact, any value larger than $1$). This is the case when we forbid more complex sets of graphs. In the following proposition we only take $t \geq 16$ to avoid complications in the statement that would make the conclusions more difficult to observe.
\begin{prop}
\label{prop:excludingStars}
Let $t \geq 16$ be an integer, and let $\cH = \{K_{1,t}, 2K_{1,3} \}$. Then the class $\Ex(\cH)$ is near-pure with $t-10 \leq \gap_{\cH}(n) \leq t-1$.
\end{prop}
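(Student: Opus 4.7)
My plan is to prove the two bounds $t-10 \le \gap_{\cH}(n) \le t-1$ separately, with the lower bound via explicit constructions and the upper bound via structural analysis.

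For the lower bound, I would exhibit for each sufficiently large $n$ two edge-maximal $\cH$-free graphs on $n$ vertices whose edge counts differ by at least $t-10$. Let $G_1$ be the disjoint union of the wheel $W_{t-1}$ (a central hub $c$ joined to every vertex of a rim $(t-1)$-cycle) with a cycle $C_{n-t}$; then $e(G_1) = 2(t-1) + (n-t) = n+t-2$. Let $G_2$ be the disjoint union of $K_5$ with $C_{n-5}$; then $e(G_2) = 10 + (n-5) = n+5$. Both are $\cH$-free since each has exactly one component with a $K_{1,3}$ minor (the wheel, respectively $K_5$), forbidding $2K_{1,3}$; and neither component has enough vertices or the right local structure to host a $K_{1,t}$ minor. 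To check edge-maximality: any chord in the wheel's rim produces disjoint claws at $c$ and at the chord endpoint (hence a $2K_{1,3}$ minor); any chord in the detached cycle of $G_1$ creates a $\theta$-subgraph with a $K_{1,3}$ minor there, which combined with the wheel's claw gives $2K_{1,3}$; any cross-component edge $v_iu$ in $G_1$ makes $\{c,v_i,u\}$ a branch set with $t-2$ singleton rim arms plus two arcs of $C_{n-t}$, yielding $t$ arms and a $K_{1,t}$ minor. The analogous check for $G_2$ uses that $K_5-v_i \cong K_4$ still contains a $K_{1,3}$ subgraph disjoint from a claw at the cross-endpoint $u$ on the cycle side. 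Thus $e(G_1) - e(G_2) = t-7 \ge t-10$.

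For the upper bound, I would analyse the structure of an arbitrary edge-maximal $\cH$-free graph $G$. First, at most one component of $G$ contains a $K_{1,3}$ minor, since two such would together give $2K_{1,3}$; call this (if present) the \emph{complex} component $C$. Second, in any edge-maximal $\cH$-free graph every other component must be a cycle: any path of length at least $2$ can be closed into a cycle without introducing a $K_{1,3}$ minor there, and the small components (an isolated vertex or an isolated edge) either attach into $C$ (contradicting edge-maximality) or force the graph to be edge-maximal by the creation of an $\cH$-minor upon attachment. Third, setting $\sigma(C) := e(C) - v(C)$, one has $\sigma(C) \ge -1$ trivially from the connectedness of $C$, and $\sigma(C) \le t-2$ by combining the $K_{1,t}$- and $2K_{1,3}$-minor-free conditions. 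Since $e(G) = e(C) + (n - v(C)) = n + \sigma(C)$ (up to an $O(1)$ adjustment for an isolated-vertex component), it follows that $n-1 \le e(G) \le n+t-2$, hence $\gap_{\cH}(n) \le t-1$.

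The main technical obstacle is the bound $\sigma(C) \le t-2$ for an edge-maximal complex $C$. The wheel $W_{t-1}$ attains equality, and under the hypothesis $t \ge 16$ this also accommodates the small cliques $K_5, K_6, K_7$ (with $\sigma \le 14 \le t-2$). For general $C$ I would use a hub-type argument: the $K_{1,t}$-minor-free condition caps the "arm count" at any branch set at $t-1$, forcing any candidate hub vertex to have degree at most $t-1$; the $2K_{1,3}$-minor-free condition then prevents $C$ from supporting a second independent hub-like substructure, so that $C$ essentially consists of a single hub together with a wheel-rim or clique-like neighborhood. A case analysis on the densest admissible configurations (cliques $K_r$ with $r \le 7$, wheels $W_k$ with $k \le t-1$, and hybrids obtained from them by identification of vertices or edges) then yields $\sigma(C) \le t-2$ uniformly, completing the proof.
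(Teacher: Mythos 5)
Your lower-bound half is correct, and in fact takes a different (and simpler) route than the paper: the paper's sparse example is $K_5$ with two of its vertices $u,v$ joined to two new vertices $x,y$ that are linked by a long path (an edge-maximal $\cH$-free graph with $n+8$ edges, giving $\gap_{\cH}(n)\ge t-10$), whereas your $K_5\sqcup C_{n-5}$ has only $n+5$ edges, and your maximality checks (chord in a cycle gives a claw there disjoint from a claw in the dense piece; a cross edge gives a claw at the cycle endpoint disjoint from a claw in $K_4$, or a $K_{1,t}$ minor in the wheel case) are sound, so you even get $t-7\ge t-10$. The dense example (wheel plus cycle, $n+t-2$ edges) coincides with the paper's.

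The genuine gap is in the upper bound. Your inequality $\sigma(C)\le t-2$ for the complex component is the entire content of $M^+_{\cH}(n)\le n+t-2$, and the argument you offer for it --- that the $2K_{1,3}$-free condition forces $C$ to be ``a single hub with a wheel-rim or clique-like neighbourhood'', followed by a case analysis over cliques $K_r$ ($r\le 7$), wheels, and hybrids obtained by identifications --- is not a proof and the proposed classification is not exhaustive: for instance $K_5$ with a long pendant path, or the paper's own sparse component ($K_5$ with $u,v$ joined to $x,y$ and a long $x$--$y$ path), are connected graphs with no $K_{1,t}$ and no $2K_{1,3}$ minor that fit none of these shapes, and nothing in your sketch bounds the edges of an arbitrary such component. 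The paper instead proves the density bound directly by a degree-sum argument over distance layers from a maximum-degree vertex $v$: necessarily $\Delta\le t-1$; if $\Delta\ge 7$ every neighbour of $v$ has degree at most $3$ and every vertex at distance at least $2$ from $v$ has degree at most $2$ (otherwise one finds a claw at that vertex vertex-disjoint from a claw at $v$), so the degree sum is at most $2n+2\Delta-2\le 2n+2t-4$; if $\Delta\le 6$ the degree sum is at most $2n+28$; since $t\ge 16$ this gives $e(G)\le n+t-2$ in all cases. You need an argument of this kind (or a genuinely justified structure theorem) to close your step 3. A smaller but real slip: as written, your accounting only yields $e(G)\ge n-2$, since you do not exclude an acyclic complex component coexisting with an isolated vertex or edge; to reach the stated bound $t-1$ you must show an edge-maximal $\cH$-free graph has at most one acyclic component (attaching a pendant vertex or path to a leaf of a tree creates neither a $K_{1,t}$ nor a $2K_{1,3}$ minor), which your remark about small components does not establish.
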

\begin{proof}
We first claim that for all $n \geq 1$, every edge-maximal $\cH$-free graph $G$ satisfies $e(G) \geq n-1$. Indeed, every $\cH$-free graph must have at most one component that is not a cycle nor a path to avoid creating a $2K_{1,3}$-minor. Consequently, a maximal $\cH$-free graph has at most one acyclic component because we could connect one of the endpoints of any path to a leaf of any other tree without creating any of the forbidden minors.

Now let $G$ be an edge-maximal $\cH$-free graph on $n \geq 4$ vertices. Let $\Delta = \Delta(G)$ denote the maximum degree of a vertex in $G$. Clearly $\Delta \geq 3$.  We consider three cases -- when $3 \leq \Delta \leq 5$, $\Delta=6$ and $\Delta \geq 7$.  In each case, let $v$ be a vertex of degree $\Delta$, let $V_i$ denote the set of vertices at distance  $i$ from $v$ in $G$, and let $W_2$ denote $\bigcup_{i \geq 2} V_i$, the set of vertices at distance at least $2$ from $v$ (recall that $G$ might not be connected).

Suppose first that $3 \leq \Delta \leq 5$.  Each vertex in $V_1$ can have at most 2 edges to $V_2$ (this is immediate if $\Delta = 3$, while for $\Delta \geq 4$ follows from the fact that otherwise we have a $2K_{1,3}$ minor), so there are at most $2\Delta$ edges between $V_1$ and $V_2$.  Similarly, each vertex in $W_2$ can have at most 2 edges to vertices in $W_2$.  Hence the degree sum is at most 
\[
 (\Delta+1) \cdot \Delta +2\Delta +(n-\Delta-1) \cdot 2=2n+(\Delta+1)\Delta-2 \leq 2n+28.
\]
If $\Delta = 6$ then each vertex in $W_2$ has degree at most 2, so the degree sum is at most $7 \cdot 6 + (n-7)\cdot 2=2n+28$. (Observe that the disjoint union of $K_7$ and $C_{n-7}$ achieves this bound.)  If $\Delta \geq 7$ then also each vertex in $V_1$ has degree at most 3, so the degree sum is at most 
\[
 \Delta+ \Delta \cdot 3 + (n-\Delta-1) \cdot 2 = 2n + 2 \Delta -2;
\]
and since also $\Delta \leq t-1$ this is at most $2n+2t-4$. Thus for $\Delta \leq 6$ we have $e(G) \leq n+14$, and for $\Delta \geq 7$ we have $e(G) \leq n+t-2$.  Hence, since $t \geq 16$, we always have $e(G) \leq n+t-2$.  It follows that $\gap_{\cH}(n) \leq n+t-2 -(n-1) \leq t-1$.

The upper bound $n+t-2$ is achieved.  Let $G'$ be the disjoint union of the $t$-vertex wheel (a~$C_{t-1}$ plus a central vertex) and $C_{n-t}$.   Then $G'$ is (edge-maximal) $\cH$-free, and $e(G')=n+t-2$.

We now find a much smaller edge-maximal $\cH$-free graph. Start with $K_5$, choose two vertices $u$ and $v$ in the $K_5$ and add two new vertices $x$ and $y$, both adjacent to both of $u$ and $v$ (this gives the total of $10+4$ edges so far). Finally we add $n-7$ vertices which form a path of $n-6$ edges between $x$ and $y$. The resulting graph is edge-maximal $\cH$-free with $n+8$ edges.  Hence $\gap_{\cH}(n) \geq n+t-2 -(n+8) =t-10$.
\end{proof}


\section{Concluding remarks and open problems}

When the connected graph $H$ satisfies $\delta(H)=1$ then a natural example of a leaf-and-edge-maximal $H$-free graph is a union of disjoint copies of $K_{h-1}$, where $h=v(H)$.  It often turns out to be a ``dense'' example of such a graph, though in some cases we can find denser $H$-free graphs (see, e.g., Lemmas \ref{lem:cliquePlusALeaf} and \ref{lem:leafPlusAlmostClique}). In general, it appears that the graphs with minimum degree $1$ can cause us most trouble analysing their purity, as illustrated in the following example.

\begin{example}
Let the graph $H$ on $h \geq 6$ vertices consist of a clique on $h-2$ vertices and two pendant, non-incident edges. Two obvious examples of edge-maximal $H$-free graphs are a union of disjoint cliques each on $h-1$ vertices, and a union of cliques each on $h-2$ vertices that share one common vertex. It is easy to check that in both cases the density of these graphs tends to $(h-2)/2$ as the number of cliques constituting them tends to infinity. As finding other edge-maximal $H$-free graphs appears non-trivial, this might suggest that $\Ex(H)$ is near-pure.

This is, however, not true and the following sparse construction, after comparing with the disjoint union of copies of $K_{h-1}$ (together with a smaller clique if necessary), will show that we have
\begin{equation} \label{eqn.twoleaves}
 \limp(H) \geq \frac{h-4}{2}
\end{equation}
for all $h \geq 6$. Let $G'$ be a subdivision of $K_{h-2}$, obtained from $K_{h-2}$ by subdividing every edge at least once. Let $H^-$ be $H$ less a leaf (that is, $K_{h-2}$ plus one pendant edge). Clearly, adding an edge joining two vertices created through subdivisions of the same edge of $K_{h-2}$ creates an $H^-$-minor in $G'$. In fact, by case analysis, it is easy to check that $G'$ is leaf-and-edge-maximal $H^-$-free (it is enough to check it for $h=6$ because the edge we add to $G'$ can be ``wrapped'' in a $K_4$ containing it). Importantly, when we add an edge to $G'$ then we always have at least two choices of an original vertex of $K_{h-2}$ to which we can attach a leaf of the $H^-$-minor (see Figure \ref{figure:cliquePlusTwoLeafs}). For $n$ large enough, it is then enough to take a union of two such (not necessarily identical) subdivisions of $K_{h-2}$ of sizes that sum up to $n+1$, and connect them by  picking an original vertex of $K_{h-2}$ from each subdivided graph and identifying them. The resulting graph is edge-maximal $H$-free with density tending to $1$ as $n$ tends to infinity. This establishes~\eqref{eqn.twoleaves}, and completes the example.

\begin{figure}[htb] \centering
  \begin{tikzpicture}[scale = \subdivisionscale]
    \tikzstyle{vertex}=[draw,shape=circle,minimum size=5pt,inner sep=0pt]
    
    \node[vertex] (P-1) at (0,0) {~};
    \node[vertex] (P-2) at (0,2) {~};
    \node[above = 0.6em, left = 0.1em] () at (0,2) {$1$};
    \node[vertex] (P-3) at (2,2) {~};
    \node[vertex] (P-4) at (2,0) {~};
    \node[below = 0.6em, right = 0.1em] () at (2,0) {$2$};
    
    \node[vertex] (Q-11) at (0,2/3) {~};
    \node[vertex] (Q-12) at (0,4/3) {~};
    \node[vertex] (Q-21) at (1/2,0) {~};
    \node[vertex] (Q-22) at (1,0) {~};
    \node[vertex] (Q-23) at (3/2,0) {~};
    \node[vertex] (Q-3) at (2,1) {~};
    \node[right = 0.2em] () at (2,1) {$b$};
    \node[vertex] (Q-4) at (1,2) {~};
    \node[above = 0.2em] () at (1,2) {$a$};
    \node[vertex] (Q-5) at (1,1) {~};
    \node[left = 0.6em, below = 0.2em] () at (1,1) {$c$};
    \node[vertex] (Q-6) at (2.75,-0.75) {~};
    
    \foreach \x/\y in {1/11, 1/21, 2/12, 2/4, 2/5, 3/4, 3/3, 4/23, 4/3, 4/5} {
     \draw [color=black] (P-\x) -- (Q-\y);
     }
    \draw [color=black] (Q-11) -- (Q-12); 
    \draw [color=black] (Q-21) -- (Q-22); 
    \draw [color=black] (Q-22) -- (Q-23); 
    \draw [color=black] (P-1) .. controls +(1,-1) and +(-0.5,-0.5) .. (Q-6);
    \draw [color=black] (P-3) .. controls +(1,-1) and +(0.5,0.5) .. (Q-6);
    
    \draw [color=black, dashed] (Q-3) -- (Q-4);
     
  \end{tikzpicture}
  \caption{Graph $G'$ for $h=6$ which is a subdivision of $K_4$. When, for example, we add the edge $\{a,b\}$ to $G'$, we can contract $\{1,a\}$ and $\{2,b\}$ and delete either $\{1,c\}$ or $\{2,c\}$, hence finding a minor consisting of a $K_4$ and a pendant edge attached to either $1$ or $2$.}
  \label{figure:cliquePlusTwoLeafs}
\end{figure}
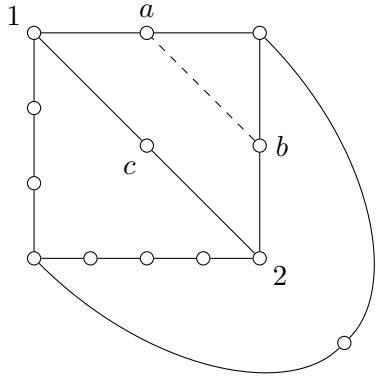
\end{example}

Let us recall the definition of the set
\[
 E_\cH(n) = \{ e(G) : v(G) = n \text{ and } G \text{ is an edge-maximal $\cH$-free graph} \}.
\]
The main objects of study of this paper were the extreme values of the set $E_\cH(n)$, i.e., $M^-_\cH(n)$ and $M^+_\cH(n)$. However, once we know that $\Ex(\cH)$ is not pure (i.e., that $M^-_\cH(n) \neq M^+_\cH(n)$ for some $n$), we can ask additional questions about the structure of $E_\cH(n)$. As a test case, let us consider $\cH = \{K_5\}$.

Recall again the result of Wagner, who proved that edge-maximal $K_5$-free graphs are obtained as $2$- or $3$-clique-sums of planar graphs and of the Wagner graph $W_8$ (the sums must be ``maximal'', in particular, we only take a $2$-clique-sum of two graphs along an edge if that edge in not in any triangle in at least one of those graphs).

Consequently, taking clique-sums of only planar graphs, always leads to building edge-maximal $K_5$-free graphs on $n$ vertices and $3n-6$ edges. Therefore, the first interesting case is $n=8$. The only possible edge-numbers of edge-maximal $K_5$-free graphs on $8$ vertices are $12$ (the Wagner graph $W_8$) and $18$ ($3$-clique-sums of planar graphs). For $n=9$ these edge-numbers are $14$ ($W_8$ glued to a triangle) and $21$, while for $n=10$ it can be $16$ ($W_8$ plus two triangles glued to different edges of $W_8$), $17$ ($W_8$ and $K_4$ glued along an edge) or $24$. Continuing this way, for $n=14$, we can build edge-maximal $K_5$-free graphs with any number of edges between $23$ and $29$, as well as $36$.

More generally, taking $0 \leq i \leq 5$ and $n = 6k+2+i$ large, we have $M^-_{K_5}(n) = \frac{11(n-2-i)}{6}+1+2i$, and $E_{K_5}(n)$ contains all values between $M^-_{K_5}(n)$ and $3n-13$ (obtained, e.g., using one copy of $W_8$ glued along an edge with a triangulation on $n-6$ vertices), as well as $3n-6$. Hence in general we don't have $E_\cH(n)$ forming an interval, but do we always have $\gap_\cH(n) - |E_\cH(n)| = O(1)$, or at least is it always the case that if $\Ex(\cH)$ is linearly impure then $|E_H(n)| / \gap_H(n) \to 1$ as $n$ tends to infinity?

We have determined the complete list of four connected graphs $H$ leading to pure minor-closed classes $\Ex(H)$. For connected $H$ we also know that $\Ex(H)$ is linearly impure if
\begin{itemize}
 \item $\delta(H) \geq 2$, see Lemma \ref{lem:noLeaves}, or
 \item $H$ has a strongly separating vertex (except for the claw $K_{1,3}$), see Lemma \ref{lem:separatingVertex}, or
 \item $H$ is the path $P_4$ (Observation \ref{obs:n=4twoLeaves}), the bull graph (Observation \ref{obs:n=5twoLeaves}), a clique on at least four vertices with an additional one  (Lemma \ref{lem:cliquePlusALeaf}) or two leaves (see the discussion at the beginning of this section), or
 \item $H$ consists of a clique on at least five vertices minus a matching, plus a pendant edge, see Lemma \ref{lem:leafPlusAlmostClique}, or
 \item $H$ is one of the three graphs discussed in Observation \ref{obs:h=5oneLeaf}.
\end{itemize}

Additionally, we know that $\Ex(H)$ is near-pure with $\gap_H(n) = 1$ if $H$ is the claw (Observation \ref{obs:claw}) or the pan graph (Observation \ref{obs:pan}). What about the remaining connected graphs $H$ which are not pure? Are there any more near-pure minor-closed classes $\Ex(H)$ for some connected graph $H$? Can we find an example such that $\gap_H(n) \geq 2$ for some $n$?

We defined $\limp(\cH) = \liminf_{n \to \infty} \gap_{\cH}(n)/n$. Theorem~\ref{thm:AddableLimit} says that $\gap_{\cH}(n)/n$ tends to a limit if all graphs in $\cH$ are $2$-connected, so that in this case we could define $\limp(\cH)$ as the limit of $\gap_{\cH}(n)/n$. Do we always have $\gap_{\cH}(n)/n \to \limp(\cH)$?

Finally, what about minor-closed classes with two or more connected excluded minors, whose analysis we started in Section \ref{sec:generalizations}: which are the pure classes, and are all such classes pure or near-pure or linearly impure? For example, the classes $\Ex(K_5, K_{3,3})$ of planar graphs, $\Ex(K_3, K_{1,3})$ of `forests of paths', $\Ex(2K_2, K_{3})$ of a star and isolated vertices, $\Ex(\text{Diamond}, \text{Bowtie})$ of graphs consisting of unicyclic and acyclic components, and $\Ex(K_4, K_{2,3})$ of outerplanar graphs are all pure; while for all $t \geq 5$, the class $\Ex(C_t, K_{1,3})$ where each component is a path or a short cycle, is near-pure with $\gap(n) = 1$ for all $n \geq \max\{t,6\}$ (two examples of $\{C_t, K_{1,3}\}$-free edge-maximal graphs are a path on $n$ vertices and $n-1$ edges, and a union of disjoint copies of $C_3$ and $C_4$ with total of $n$ vertices and $n$ edges, which exists for all $n \geq 6$ by Fact \ref{fact:frobeniusNumber}). Note that $\Ex(C_4, K_{1,3})$ is an interesting case with $\gap(3k) = 1$ for all $k \geq 2$, and $\gap(n) = 0$ otherwise. Obviously, similar questions could be asked about excluding disconnected minors.

\paragraph{Acknowledgements} We would like to thank Andrius Vaicenavicius for stimulating discussions during the course of this work. We would also like to thank the referee for careful comments.

\begin{appendices}
 \section{Connected graphs $H$ on $5$ vertices with $\delta(H)=1$}
 \label{app:v5delta1}

In this appendix we refine the analysis in Observation \ref{obs:h=5oneLeaf} and, in the following three propositions, we study the purity of the connected graphs $H$ on $5$ vertices that have exactly one leaf $v$ and are such that $H-v$ is 2-connected but is not a complete graph.

\begin{prop}
 \label{prop:C4plusLeaf}
 Let $H$ be $C_4$ with an added leaf. Then $\gap_{H}(n)= \tfrac12 n +O(1)$ and so $\limp(H) = \frac12$.
\end{prop}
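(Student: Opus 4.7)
The plan is to show $M^+_H(n) = \tfrac{3n}{2} + O(1)$ and $M^-_H(n) = n + O(1)$; subtracting then gives $\gap_H(n) = \tfrac{n}{2} + O(1)$, and in particular $\limp(H) = \tfrac12$.

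The key structural observation is that if $G$ is a connected $H$-free graph on $v \geq 5$ vertices, then either $G = C_v$ or every cycle in $G$ has length exactly~$3$. Indeed, suppose $G$ has a cycle $C$ of length $\geq 4$ and some vertex $u \notin V(C)$; then a shortest $u$-to-$C$ path can be contracted to a single edge, and the remaining edges of $C$ contracted so that $C$ becomes a $4$-cycle with $u$ as a pendant leaf, giving $H$ as a minor. Applied to the $4$-cycle hidden inside a diamond, the same argument also forces no two triangles to share an edge, so in the ``all triangles'' case $G$ is a cactus of triangles with possibly some tree edges; such a graph satisfies $e(G) = v(G) - 1 + t$, where $t$ is the number of triangles, and the fact that triangles share only vertices forces $t \leq (v(G)-1)/2$ (with equality for the friendship graph). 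Together with the observation that $K_4$ achieves $e = \tfrac{3v}{2}$ for $v = 4$, I deduce $e(G) \leq \tfrac{3v(G)}{2}$ for any connected $H$-free $G$, so $M^+_H(n) \leq \tfrac{3n}{2}$. On the other hand, $\lfloor n/4 \rfloor$ disjoint copies of $K_4$ (together with a clique on the remaining $\leq 3$ vertices) are edge-maximal $H$-free and achieve $M^+_H(n) \geq \tfrac{3n}{2} - O(1)$.

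For the matching lower bound on $M^-_H(n)$, the structural lemma forces every component of an edge-maximal $H$-free graph to be a copy of $K_4$, a cycle $C_k$ with $k \geq 5$, or a triangle-cactus component $X$. Crucially, an edge-maximal graph can contain at most one triangle-cactus component, since two such components could always be joined by a new edge without introducing any cycle of length $\geq 4$. Writing $a$ for the number of $K_4$ components and applying $e(G) = v(G) - c(G) + \sum_i r_i$, where $r_i$ is the cyclomatic number of component $i$ (which equals $3$ for $K_4$, $1$ for each $C_k$, and $t(X)$ for the cactus), I get $e(G) = n + 2a + t(X) - [X \neq \emptyset]$, which is at least $n - 1$. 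The disjoint union $C_{n-1} \cup K_1$ is edge-maximal $H$-free for all $n \geq 6$ and attains this bound, so $M^-_H(n) = n - 1$ for all sufficiently large $n$, and the proposition follows.

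The main point to get right is the edge-maximality analysis underlying the $M^-$ bound: one must verify that the cactus component $X$ cannot be a tree on $\geq 3$ vertices, since then adding an edge between two vertices at distance~$2$ in $X$ would merely create a triangle and keep the graph $H$-free. Hence $X$ must satisfy either $|X| \leq 2$ or $t(X) \geq 1$, yielding $e(X) \geq |X| - 1$ as required. This small case analysis, together with confirming that no two cactus-type components can coexist in an edge-maximal graph, is the slightly delicate part of the argument.
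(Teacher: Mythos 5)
Your proposal is correct and follows essentially the same route as the paper: the same dense and sparse extremal examples (disjoint $K_4$'s, and $C_{n-1}$ plus an isolated vertex), and the same structural dichotomy for $H$-free components (a long cycle, or a graph whose blocks are edges and triangles, which is your ``all cycles are triangles'' formulation). The cyclomatic-number bookkeeping for the lower bound is a slightly more elaborate way of saying that an edge-maximal $H$-free graph has at most one acyclic component, but it is the same argument in substance.
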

\begin{proof}
 Any edge-maximal $H$-free graph has at most one acyclic component, thus $M^-_H(n) \geq n-1$.  Also, for $n \geq 6$, the cycle $C_{n-1}$ together with an isolated vertex is edge-maximal $H$-free graph with $n-1$ edges. Hence $M^-_H(n) = n-1$ for $n \geq 6$.
 
 On the other hand, a disjoint union of $\lfloor n/4 \rfloor$ copies of $K_4$, together with a copy of $K_t$ where $t = n-4\lfloor n/4 \rfloor$ if $4 \nmid n$, is an edge-maximal $H$-free graph with $3n/2+O(1)$ edges. Hence we have $M^+_H(n) \geq 3n/2+O(1)$. It remains to show that every $H$-free graph $G$ has $e(G) \leq 3n/2+O(1)$.
 
 Let $C$ be a component of $G$. If $v(C) \leq 4$ then clearly $e(C) \leq 3v(C)/2$. If $v(C) > 4$ and $C$ is not $C_4$-free then $C$ must be a cycle, hence $e(C) = v(C)$. Finally, if $C$ is $C_4$-free then each block of $C$ is an edge or a triangle, so $e(C) \leq (3v(C)-3)/2$. (Recall that a \emph{block} of a graph is a maximal connected subgraph that has no cut-vertex.) This implies that $M^+_H(n) \leq 3n/2$, and so $M^+_H(n) = 3n/2 +O(1)$.  Thus we have $\gap_{H}(n)= n/2 +O(1)$.
\end{proof}

\begin{prop}
 \label{prop:diamondPlusLeaf1}
 Let $H$ be a diamond ($K_4$ minus an edge), with an added leaf adjacent to a vertex of degree $2$ of the diamond. Then $\gap_H(n) = n-3$ for each $n \geq 6$ and so $\limp(H) = 1$.
\end{prop}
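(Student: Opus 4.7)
The plan mirrors the proof of Proposition~\ref{prop:C4plusLeaf}: determine $M^-_H(n)$ and $M^+_H(n)$ separately and then compute the difference.

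For the minimum, I expect $M^-_H(n) = n+1$. The upper bound $M^-_H(n) \leq n+1$ is already given by Observation~\ref{obs:h=5oneLeaf}(2): the graph obtained from $C_{n-1}$ by joining a new vertex to two adjacent cycle vertices is edge-maximal $H$-free with $n+1$ edges. For the reverse inequality, I would use Observation~\ref{obs:atMostOneNotIO} to decompose an arbitrary edge-maximal $H$-free graph into leaf-and-edge-maximal components together with at most one exceptional component. Each leaf-and-edge-maximal component for this $H$ (the smallest being $K_4$) has density at least $3/2$; the exceptional component must contain a cycle, because otherwise it is a tree and we could add an edge between two of its leaves without producing a diamond-plus-pendant configuration, contradicting edge-maximality. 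A careful count of these contributions yields $M^-_H(n) \geq n+1$.

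For the maximum, I expect $M^+_H(n) = 2n-2$. A near-extremal candidate is the book graph $B_{n-2}$ obtained from $K_{2,n-2}$ by adding the edge between its two vertices of degree $n-2$, which has $2n-3$ edges and is edge-maximal $H$-free: the only diamonds of $B_{n-2}$ use the two spine vertices as their degree-3 vertices and two pages as their degree-2 vertices, and no further page is joined to another page, so there is no external vertex that can serve as the leaf of $H$; adding any page--page edge completes a $K_4$ on two spine vertices and two pages, after which a third page plays the role of a leaf at a spine vertex (now of degree~$2$ in the diamond obtained by removing a spine--spine edge from this $K_4$). To reach $2n-2$ edges one augments the book by attaching a small additional cyclic gadget at one spine vertex (for example replacing one page by a triangle hung off a single spine vertex), still $H$-free because no new diamond-with-external-leaf configuration arises. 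The matching upper bound $M^+_H(n) \leq 2n-2$ is the main technical obstacle; I would prove it by a block-decomposition argument, showing that every $2$-connected block of an $H$-free graph is either book-like (a collection of triangles sharing a common edge) or has at most four vertices, and then summing edges across the block--cut tree.

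Combining these estimates yields $\gap_H(n) = M^+_H(n) - M^-_H(n) = (2n-2)-(n+1) = n-3$ for all $n \geq 6$, and hence $\limp(H) = \lim_{n \to \infty}\gap_H(n)/n = 1$. The hardest step will be the structural upper bound on $M^+_H(n)$: a natural approach is to induct on the number of $2$-separations of the $H$-free graph, reducing to a classification of small $2$-connected $H$-free graphs and showing that $2$-sums of such blocks do not exceed the book bound.
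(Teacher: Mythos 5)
Your overall strategy (compute $M^-_H(n)$ and $M^+_H(n)$ separately, as in Proposition~\ref{prop:C4plusLeaf}) is the right one, and your final formula $\gap_H(n)=n-3$ is correct, but both of your intermediate values are wrong by one, and the errors merely cancel. First, $M^-_H(n)=n$ for $n\geq 6$, not $n+1$: the graph formed from an $(n-3)$-cycle by attaching three pairwise non-incident pendant edges is edge-maximal $H$-free with exactly $n$ edges (it is unicyclic, hence has no diamond minor; adding any edge creates a second cycle, and since at least one of the three pendant vertices hangs off a vertex that survives as a degree-$2$ branch vertex of a diamond minor, an $H$-minor appears). Your lower-bound sketch cannot rule this out: the single exceptional component permitted by Observation~\ref{obs:atMostOneNotIO} can be the entire (connected) graph and can be unicyclic, contributing density exactly $1$; and the ``density at least $3/2$ for leaf-and-edge-maximal components'' step leans on Lemma~\ref{lem:IOmaximal}, whose hypothesis is that the class is \emph{not} linearly impure --- precisely what is being refuted here. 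Done carefully, your count only yields $M^-_H(n)\geq n$, which is the true value.

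Second, $M^+_H(n)=2n-3$ for $n\geq 5$, not $2n-2$. The book ($K_{2,n-2}$ plus the spine edge) with $2n-3$ edges is indeed edge-maximal $H$-free, but your proposed augmentation does not help: a book on $n$ vertices with one page replaced by a triangle hung off a single spine vertex has only $2n-4$ edges, fewer than the plain book, so it does not witness $2n-2$; and in fact no $H$-free graph on $n\geq 5$ vertices has more than $2n-3$ edges. The clean argument: each component $C$ either has no $K_4$-minor, hence is series-parallel with $e(C)\leq 2v(C)-3$, or contains a subdivision of $K_4$, which must be spanning and have no subdivided edges (either failure yields a vertex attachable as the leaf of $H$ at a degree-$2$ vertex of a diamond), forcing $C=K_4$; summing over components gives $e(G)\leq 2n-3$ for $n\geq 5$. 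Your proposed block classification (``every $2$-connected block is book-like or has at most four vertices'') is also false as stated: long cycles $C_m$ and the graphs $K_{2,m}$ are $2$-connected $H$-free blocks on arbitrarily many vertices that are not collections of triangles sharing an edge. So while the statement you aim at is true, the proof as proposed has two genuine compensating errors and the structural step it relies on would fail.
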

\begin{proof}
Let $G$ be an edge-maximal $H$-free graph.  Then at most one component has at most one cycle.  Further any acyclic component has at most two vertices.  So $M^-_H(n) \geq n$ for $n \geq 3$. Now let $n \geq 6$ and $G$ be formed from an $(n-3)$-cycle and three non-incident pendant edges.  It is easy to check that $G$ is an edge-maximal $H$-free graph.  Thus $M^-_H(n) = n$ for $n \geq 6$.
 
 If $G$ is a $K_{2,n-2}$ with an extra edge joining the two vertices in the small class then $G$ is edge-maximal $H$-free, and so $M^+_H(n) \geq 2n-3$. Let us show that any $H$-free graph $G$ with $v(G) \geq 5$ satisfies $e(G) \leq 2v(G) -3$.
 
 Let $C$ be a component of $G$. If $C$ is $K_4$-free then we are done. Hence assume that $C$ is not $K_4$-free. Hence it has a subgraph homeomorphic to $K_4$. We observe that this subgraph must be spanning $C$ (as otherwise we would have an $H$-minor). Also, if any of the edges of $K_4$ were subdivided, it would also create an $H$-minor. Therefore $C$ must be a $K_4$. Hence $M^+_H(n)=2n-3$ for each $n \geq 2$, except $M^+_H(4)=6$. Thus $\gap_H(n) = n-3$ for each $n \geq 6$.
\end{proof}

\begin{prop}
 \label{prop:diamondPlusLeaf2}
 Let $H$ be a diamond, with an added leaf adjacent to a vertex of degree $3$ of the diamond. Then $\gap_{H}(n)= \tfrac23 n+O(1)$ and so $\limp(H) = \tfrac23$.
\end{prop}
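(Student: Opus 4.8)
The plan is to evaluate the two extreme edge counts separately: I will show that $M^-_H(n) = n + O(1)$ and that $M^+_H(n) = \tfrac53 n + O(1)$, so that $\gap_H(n) = M^+_H(n) - M^-_H(n) = \tfrac23 n + O(1)$ and hence $\limp(H) = \tfrac23$. In particular the growth constant is $\beta_H = \tfrac53$, and the dense extremal graphs are \emph{not} disjoint unions of $K_4$'s (as the sparse example in Observation \ref{obs:h=5oneLeaf} might suggest) but a denser ``windmill'' of diamonds.

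For the sparse side I claim every edge-maximal $H$-free graph $G$ on $n$ vertices has $e(G) \geq n - O(1)$. A tree component with at least three vertices is impossible, since adding a chord to it would only produce a unicyclic component, which contains no diamond (hence no $H$) minor, contradicting edge-maximality; likewise $G$ has at most one isolated vertex and at most one component that is a single edge, as otherwise two such components could be joined while staying $H$-free. Every remaining component is therefore connected and not a tree, so it has at least as many edges as vertices, giving $e(G) \geq n - O(1)$. The matching upper bound is the edge-maximal $H$-free graph of Observation~\ref{obs:h=5oneLeaf} obtained from $K_4$ by subdividing one edge $n-4$ times, which has exactly $n+2$ edges; hence $M^-_H(n) = n + O(1)$.

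For the dense side, for $k \geq 1$ let $G_k$ be obtained from $k$ disjoint diamonds by identifying one degree-$2$ vertex of each into a single common vertex $c$; then $v(G_k) = 3k+1$ and $e(G_k) = 5k = \tfrac53(v(G_k)-1)$, so the density tends to $\tfrac53$. The key point is that $G_k$ is $H$-free: in each diamond the two degree-$3$ vertices are saturated (they keep degree $3$ in $G_k$), the private degree-$2$ vertices see only their own diamond, and $c$ is a degree-$2$ vertex of every diamond, so no diamond-minor of $G_k$ leaves a spare neighbour at a degree-$3$ branch set for the pendant. (Joining $c$ to a private vertex, or adding any edge between two diamonds, at once creates $H$, so $G_k$ is also edge-maximal.) Adjusting for general $n$ via Fact~\ref{fact:frobeniusNumber} gives $M^+_H(n) \geq \tfrac53 n - O(1)$. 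The matching upper bound $\beta_H \leq \tfrac53$ I would prove through the block--cut structure: since attaching a pendant anywhere to $K_4$ creates $H$, a $K_4$ (more generally any leaf-maximal block, of density at most $\tfrac32$) must be an entire component and can be treated directly, while any block carrying a cut vertex is \emph{attachable} there, and for such a $2$-connected $H$-free block $B$ I would show $e(B) \leq \tfrac53(v(B)-1)$, with the diamond attaining equality. Given this, the identity $\sum_i(v(B_i)-1) = v(C)-1$ for a component $C$ with blocks $B_1,\dots,B_m$ telescopes to $e(C) = \sum_i e(B_i) \leq \tfrac53(v(C)-1)$, and summing over components yields $e(G) \leq \tfrac53 v(G)$, whence $M^+_H(n) = \tfrac53 n + O(1)$.

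The main obstacle is exactly this per-block inequality, i.e. the structural classification (or at least the density bound) of the attachable $2$-connected $H$-free graphs, together with the rigorous verification that the diamond-windmill is $H$-free. The delicate feature of $H$ is that a pendant can be \emph{manufactured} at a degree-$3$ branch vertex by contracting a neighbouring vertex into it; one must therefore rule out all such contractions, which is precisely the phenomenon distinguishing the safe sharing of a single degree-$2$ vertex (density $\tfrac53$) from sharing two degree-$2$ vertices, joining the private vertices, or using the book graph $K_{2,t}+e$ (each of which creates $H$ via a single contraction). Once $\beta_H = \tfrac53$ is established, combining it with $M^-_H(n) = n + O(1)$ gives $\gap_H(n) = \tfrac23 n + O(1)$ and $\limp(H) = \tfrac23$.
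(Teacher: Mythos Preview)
Your overall strategy matches the paper's: you compute $M^-_H(n)=n+O(1)$ and $M^+_H(n)=\tfrac53 n+O(1)$ separately, and you use the same diamond-windmill for the dense lower bound. Your sparse upper example (the subdivided $K_4$ with $n+2$ edges) differs from the paper's (an $(n-3)$-cycle with three non-incident pendant edges, giving exactly $n$ edges), but either yields $n+O(1)$.

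The substantive difference is in the upper bound $M^+_H(n)\le \tfrac53 n$. The paper does not use your leaf-maximal/attachable dichotomy; instead it splits on whether a component has a $K_4$-minor. If it does, the paper shows that the component is precisely a subdivision of $K_4$ (the spanning subdivision must exist, and any extra edge creates $H$), so its density is at most $\tfrac32$. If no block has a $K_4$-minor, the paper shows that every 2-connected block which is not an edge or cycle is exactly a theta graph (a subdivided diamond): it contains a theta subgraph $D$, which is forced to be spanning and to equal the block, using $H$-freeness and $K_4$-freeness. Since a theta graph on $v$ vertices has $v+1 \le \tfrac53(v-1)$ edges, the telescoping over blocks goes through.

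Your dichotomy is morally the same (the leaf-maximal 2-connected $H$-free graphs are exactly the subdivisions of $K_4$, and the attachable ones are edges, cycles, and theta graphs), but both of your claims---that leaf-maximal blocks have density at most $\tfrac32$, and that attachable blocks satisfy $e(B)\le\tfrac53(v(B)-1)$---rest on this same structural characterisation, which you have not supplied. In particular, the parenthetical ``of density at most $\tfrac32$'' is not obvious and in the paper is a consequence of identifying the leaf-maximal blocks as $K_4$-subdivisions. So the gap you flag as ``the main obstacle'' is real, and the paper closes it via the $K_4$-minor split rather than by analysing attachability directly.
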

\begin{proof}
We argue as before for $M^-_H(n)$.
Let $G$ be an edge-maximal $H$-free graph.  Then at most one component has at most one cycle.  Further any acyclic component has at most two vertices.  So $M^-_H(n) \geq n$ for $n \geq 3$. Now let $n \geq 7$ and $G$ be formed from an $(n-3)$-cycle and three non-incident pendant edges.  It is easy to check that $G$ is an edge-maximal $H$-free graph.  Thus $M^-_H(n) = n$ for $n \geq 7$.

 Now, let $n = 3k + 1 + i \geq 6$ for some $i \in \{0,1,2\}$. Then a graph obtained as follows: take $k$ copies of a diamond graph ($K_4$ minus an edge) and a $K_{i+1}$ (if $i > 0$) and connect them into one graph by identifying one of the vertices of degree $2$ in every diamond, and and arbitrary vertex of the $K_{i+1}$. The resulting graph is edge-maximal $H$-free with $5(n-1-i)/3+\binom{i+1}{2}$ edges, hence $M^+_H(n) \geq 5n/3+O(1)$. Proving the inequality in the other direction shall require a bit more work.
 
 Let $G$ be an edge-maximal $H$-free graph. Suppose $G$ has a component $C$ with a $K_4$-minor. Then $C$ must be a subdivision of $K_4$. Indeed, $C$ has a subgraph which is a subdivision of $K_4$ and it is easy to see that this subgraph must be spanning. By case analysis we may check that no edge can be added, or we would obtain $H$ as a minor. So we have $e(C) \leq 3 v(C)/2$. 

 Now, let $C'$ be a block of $G$ with no $K_4$-minor, and suppose that $C'$ is not just an edge or cycle. Then $C'$ contains a subgraph $D$ which is a subdivision of the diamond graph (since $C'$ is $2$-connected). Then $D$ consists of two vertices, $a$ and $b$, and three internally vertex disjoint $ab$ paths $P_1, P_2, P_3$ (one of which may be just an edge).

 We claim that $D$ is spanning in $C'$. For suppose there is a vertex of $C'$ not in $D$. Then, since $C'$ is $2$-connected, there are distinct vertices $u$ and $v$ in $D$ and a path $P_{out}$ of length at least $2$ between them outside $D$.  Let $u_{out}$ be the neighbour of $u$ on $P_{out}$.  Clearly $\{u,v\} \cap \{a,b\} = \emptyset$ (or we would obtain $H$ as a minor).  Also, $u$ and $v$ must be on the same path $P_i$ (or we would obtain a $K_4$-minor). Now $u$ is not adjacent in $P_i$ to at least one of $a, b$, say not adjacent to $b$. If we contract the segment of $P_i$ between $u$ and $a$ to a new vertex $a'$, we obtain a copy of $D$ with three paths between $a'$ and $b$, plus the edge $a'u_{out}$; and so we have a minor $H$, a contradiction.

 We now know that $D$ is spanning $C'$. We claim that $C'$ is in fact equal to $D$. Indeed, suppose that there is an extra edge $xy$ in $C'$. This edge cannot be between internal vertices on distinct paths $P_i$ (or we get $K_4$ as a minor).  If $\{x,y\}=\{a,b\}$ then each path $P_i$ has length at least 2 and we find an $H$-minor. If $\{x,y\} \cap \{a,b\}$ is a singleton, wlog $x=a$, then we find $H$ (with the 'extra' vertex being the neighbour of $a$ on the path $P_i$ from $a$ to $y$). The last case is when $x$ and $y$ are internal vertices of the same path $P_i$, wlog appearing along $P_i$ in the order $a,x,y,b$ (with some vertices in between, in particular between $x$ and $y$). Now we can contract the segment of $P_i$ between $a$ and $x$, and we are back in the case when $\{x,y\} \cap \{a,b\}$ is a singleton.

 We have now seen that $C' = D$.  Hence $e(C') \leq (5/3)(v(C')-1)$. Thus each block $B$ of $G$ with no $K_4$-minor has $e(B) \leq (5/3)(v(B)-1)$. Hence each component $C$ of $G$ with no $K_4$-minor has $e(C) \leq 5(v(C)-1)/3$. We may also have components $\tilde{C}$ which are subdivisions of $K_4$, and then $e(\tilde{C})< 5v(\tilde{C})/3$. Hence $e(G)< 5v(G)/3$, so $M^+_H(n)< 5n/3$. We have now seen that  $M^+_H(n)= 5n/3+O(1)$, so $\gap_{H}(n)= 2n/3+O(1)$. This completes the proof of the Proposition.
\end{proof}

\end{appendices}

\bibliographystyle{elsarticle-num}

\begin{thebibliography}{99}

\bibitem{independentCircuits}
K.~Corradi, A.~Hajnal, On the maximal number of independent circuits in a
  graph, Acta Mathematica Academiae Scientiarum Hungarica 14 (1963) 423--443.

\bibitem{disconnectedMinors}
E.~Cs\'oka, I.~Lo, S.~Norin, H.~Wu, L.~Yepremyan, The extremal function for
  disconnected minors, Journal of Combinatorial Theory Series B 126 (2017) 162--174.

\bibitem{smallClasses}
Z.~Dvo\v{r}\'ak, S.~Norine, Small graph classes and bounded expansion, Journal
  of Combinatorial Theory Series B 100 (2010) 171--175.

\bibitem{fdlvComplete}
W.~Fernandez de~la Vega, On the maximum density of graphs which have no
  subcontraction to {$K^s$}, Discrete Mathematics 46 (1983) 109--110.

\bibitem{planarSoda}
S.~Gerke, C.~McDiarmid, A.~Steger, A.~Wei{\ss}l, Random planar graphs with $n$
  nodes and a fixed number of edges, in: Proceedings of the ACM--SIAM Symposium
  on Discrete Algorithms (SODA 2005), 2005, pp. 999--1007.

\bibitem{torusNonTriangulation}
F.~Harary, P.~Kainen, S.~A.J., A.~White, A maximal toroidal graph which is not
  a triangulation, Mathematica Scandinavica 33 (1973) 108--112.

\bibitem{randomMappings}
V.~Kolchin, Random Mappings, Optimization Software Inc., New York, 1986.

\bibitem{kostochkaComplete}
A.~Kostochka, Lower bound on the {H}adwiger number of graphs by their average
  degree, Combinatorica 4 (1984) 307--316.

\bibitem{minorsLinear}
W.~Mader, Homomorphieeigenschaften und mittlere {K}antendichte von {G}raphen,
  Mathematische Annalen 174 (1967) 265--268.

\bibitem{maderComplete}
W.~Mader, Homomorphies\"{a}tze f\"{u}r {G}raphen, Mathematische Annalen 178
  (1968) 154--168.

\bibitem{randomMinorClosed}
C.~McDiarmid, Random graphs from a minor-closed class, Combinatorics,
  Probability and Computing 18 (2009) 583--599.

\bibitem{embeddableImpurity}
C.~McDiarmid, D.~Wood, Edge-maximal graphs on surfaces, Canadian Journal of Mathematics 70 (2018) 925--942.

\bibitem{nonCompleteMinors}
J.~Myers, A.~Thomason, The extremal function for non-complete minors,
  Combinatorica 25 (2005) 723--753.

\bibitem{properSmall}
S.~Norine, P.~Seymour, R.~Thomas, P.~Wollan, Proper minor-closed families are
  small, Journal of Combinatorial Theory Series B 96 (2006) 754--757.

\bibitem{sparseMinors}
B.~Reed, D.~Wood, Forcing a sparse minor, Combinatorics, Probability and
  Computing 25 (2016) 300--322.

\bibitem{robertsonSeymour}
N.~Robertson, P.~Seymour, Graph minors {I-XX}, Journal of Combinatorial Theory
  Series B.

\bibitem{completeMinors}
A.~Thomason, The extremal function for complete minors, Journal of
  Combinatorial Theory Series B 81 (2001) 318--338.

\bibitem{combinatoricsCourse}
J.~van Lint, R.~Wilson, A Course in Combinatorics, Cambridge University Press,
  1992.

\bibitem{wagnerK5}
K.~Wagner, \"{U}ber eine {E}igenschaft der ebenen {K}omplexe, Mathematische
  Annalen 114 (1937) 570--590.

\end{thebibliography}

\end{document}